\newtheorem{theorem}{Theorem}[section]
\newtheorem{lemma}[theorem]{Lemma}
\newtheorem{corollary}[theorem]{Corollary}
\def\fps@figure{htbp}
\DeclareMathSymbol{\range}{\mathord}{operators}{"3A}
\title{Unifying the geometric decompositions of full and trimmed polynomial
spaces in finite element exterior calculus}
\author{Toby Isaac}
\date{}
\begin{document}
\maketitle
\begin{abstract}
Arnold, Falk, \& Winther, in \emph{Finite element exterior calculus,
homological techniques, and applications} (2006), show how to
geometrically decompose the full and trimmed polynomial spaces on
simplicial elements into direct sums of trace-free subspaces and in
\emph{Geometric decompositions and local bases for finite element
differential forms} (2009) the same authors give direct constructions of
extension operators for the same spaces. The two families -- full and
trimmed -- are treated separately, using differently defined
isomorphisms between each and the other's trace-free subspaces and
mutually incompatible extension operators. This work describes a single
operator \(\mathring{\star}_T\) that unifies the two isomorphisms and
also defines a weighted-\(L^2\) norm appropriate for defining
well-conditioned basis functions and dual-basis functionals for
geometric decomposition. This work also describes a single extension
operator \(\dot{E}_{\sigma,T}\) that implements geometric decompositions
of all differential forms as well as for the full and trimmed polynomial
spaces separately.
\end{abstract}

\hypertarget{introduction}{%
\section{Introduction}\label{introduction}}

This introduction, describing the contribution of the subsequent
sections, is intended for those familiar with finite elements but not
with exterior calculus.

\hypertarget{trace-free-scalar-valued-finite-elements}{%
\subsection{Trace-free scalar-valued finite
elements}\label{trace-free-scalar-valued-finite-elements}}

Let \(\mathcal{T}_h\) be a triangulation of a domain
\(\Omega \subset \mathbb{R}^2\), and consider the standard finite
element space \(\mathcal{P}_3(\mathcal{T}_h)\) of continuous,
real-valued functions that are polynomials with degree at most 3 when
restricted to each triangle, \[
\mathcal{P}_3(\mathcal{T}_h) := \{f \in C^0(\Omega): f|_T \in \mathcal{P}_3(T), T \in
\mathcal{T}_h\}.
\]

A standard finite element basis for \(\mathcal{P}_3(\mathcal{T}_h)\)
associates one basis function to each vertex, two to each edge, and one
to each triangle, with each basis function supported only on the
triangles surrounding its associated mesh object.

Let \(T\) be a triangle in \(\mathcal{T}_h\) with vertices at
\(\{v_0, v_1, v_2\}\), and let \(\lambda: T \to \mathbb{R}^3\) be its
barycentric coordinates, the unique affine map satisfying
\(\lambda_i(v_j) = \delta_{ij}\) and \(\sum_i \lambda_i = 1\). The basis
function \(\phi_T\) associated with \(T\), to be fully continuous and
supported only on \(T\), must be zero on the boundary \(T\): we say that
\(\phi_T\) is \emph{trace-free}. Because \(\phi_T\) is in
\(\mathcal{P}_3(T)\) and zero on each of the three edges surrounding
\(T\), \(\phi_T\) must be a multiple of the function \[
\lambda_T := \lambda_0 \lambda_1 \lambda_2.
\] This is the \emph{bubble function} associated with \(T\).

Now let \(E\) be an edge in the mesh with vertices \(\{v_0, v_1\}\), and
let \(\lambda\) be its barycentric coordinates. The two basis functions
\(\{\phi_{E,1},\phi_{E,2}\}\) associated with \(E\), to be continuous
and supported only on the triangles on either side of \(E\), must be
zero at \(v_0\) and \(v_1\). But while the basis function \(\phi_T\) was
determined up to a constant, there is more freedom in what
\(\phi_{E,1}\) and \(\phi_{E,2}\) can be: they must vanish on the
non-shared edges of the surrounding triangles, but when restricted to
\(E\), may satisfy \[
\phi_{E,1}|_E = \lambda_E p_1, \quad \phi_{E,2}|_E = \lambda_E p_2,
\quad\quad[\lambda_E := \lambda_0 \lambda_1]
\] for any basis \(\{p_1,p_2\}\) of \(\mathcal{P}_1(E)\).

The pattern that this example demonstrates is that, for scalar-valued
polynomials on an \(n\)-dimensional simplex \(T\), there is an
isomorphism between \(\mathcal{P}_r(T)\), the polynomials on \(T\) up to
degree \(r\), and the trace-free polynomials up to degree \(r+n+1\), \[
\mathring{\mathcal{P}}_{r+n+1}(T) := \{p\in\mathcal{P}_{r+n+1}(T): p(x)|_{\partial T} = 0\}.
\] This isomorphism can be realized by an operator
\(h_T:\mathcal{P}_r(T) \to \mathring{\mathcal{P}}_{r+n+1}(T)\) defined
by pointwise multiplication with the bubble function, \[
h_T(p) := \lambda_T p.
\] We point out that because this operator is defined pointwise it can
be applied to any smoothly continuous function on \(\overline T\), not
just a polynomial, to construct a trace-free function, and it transforms
any basis of \(\mathcal{P}_r(T)\) into a basis for
\(\mathring{\mathcal{P}}_{r+n+1}(T)\).

\hypertarget{scalar-valued-finite-element-extension-operators}{%
\subsection{Scalar-valued finite element extension
operators}\label{scalar-valued-finite-element-extension-operators}}

To complete the definition of the edge-centered basis function
\(\phi_{E,1}\) from the above example, we must define its values
\(\phi_{E,1}^T\) in each triangle \(T\) adjacent to \(E\): we call this
\emph{extending} \(\phi_{E,1}\) into \(T\). For the basis function to be
a part of \(\mathcal{P}_3(\mathcal{T}_h)\), the extension
\(\phi_{E,1}^T\) must be in \(\mathcal{P}_3(T)\), and it must be
\emph{consistent}: restricting back to \(E\) should match the definition
on the edge, \[ \phi_{E,1}^T|_E =
\phi_{E,1},
\] and restricting it to any \(\tilde{E} \neq E\) should be identically
zero, \[
\phi_{E,1}^T|_{\tilde{E}} = 0,\quad\tilde{E}\neq E.
\]

Suppose \(E\) is the edge bounded by \(\{v_0,v_1\}\) and \(T\) is the
triangle bounded by \(\{v_0,v_1,v_2\}\). Because
\(\phi_{E,1}|_E = \lambda_0 \lambda_1 p_1\) for some
\(p_1\in\mathcal{P}_1(E)\), one way we can define a consistent extension
\(\phi_{E,1}^T\) is by projecting each point in \(T\) to a point in
\(E\) and evaluating \(p_1\), \[
\phi_{E,1}^T (\lambda_0,\lambda_1,\lambda_2) := \lambda_1 \lambda_2 p_1(\lambda_0 + (1/2) \lambda_2, \lambda_1 + (1/2) \lambda_2).
\]

Note that this procedure works not just for extending
\(\mathring{\mathcal{P}}_3 (E)\) into \(\mathcal{P}_3(T)\): it extends
\(\mathring{\mathcal{P}}_r (E)\) into \(\mathcal{P}_r(T)\) for any
\(r\), and its extension of \(\lambda_0\lambda_1 g\) is a consistent
extension for any \(g\) that is smooth on \(\overline{E}\).

\hypertarget{finite-element-exterior-calculus}{%
\subsection{Finite element exterior
calculus}\label{finite-element-exterior-calculus}}

Finite element exterior calculus is a framework that unifies the theory
behind \(H^1\)-conforming finite elements (spanned by fully-continuous
basis functions like \(\mathcal{P}_3(\mathcal{T}_h)\) in the preceding
section) with that of \(H(\mathrm{div})\)-conforming elements (which are
vector-valued and spanned by basis functions whose normal components
only must be continuous on the facets between cells) and
\(H(\mathrm{curl})\)-conforming elements (whose tangential components
only must be continuous on the facets and edges between cells).

In finite element exterior calculus, for each \(k\in\mathbb{N}_0\) there
is a space
\(\Lambda^k(\Omega):=C^\infty(\Omega;\mathrm{Alt}^k \mathbb{R}^n)\) of
\emph{differential \(k\)-forms}, smooth functions that take values in
\(\mathrm{Alt}^k \mathbb{R}^n\), the vector space of \emph{algebraic
\(k\)-forms}, the alternating \(k\)-linear maps that map
\(\bigotimes_{i=1}^k \mathbb{R}^n\) to \(\mathbb{R}\). For each \(k\)
there is a differential operator
\(\mathrm{d}: \Lambda^k(\Omega) \to \Lambda^{k+1}(\Omega)\) that
generalizes \(\mathrm{grad}\) (\(k=0\)), \(\mathrm{curl}\) (\(k=1\)),
and \(\mathrm{div}\) (\(k=n-1\)), and an associated Sobolev space
\(H\Lambda^k(\Omega)\) that is the closure of \(\Lambda^k(\Omega)\)
under the norm \[
\| v \|_{H\Lambda^k}^2 := \int_\Omega |v|^2 + |\mathrm{d}v|^2\ dx.
\]

To construct \(H\Lambda^k(\Omega)\)-conforming finite element spaces,
one must construct piecewise smooth functions that are
\emph{trace-continuous} at the boundaries between cells, where trace
continuity is the generalization of the normal continuity of
\(H(\mathrm{div})\)-conforming spaces and tangential continuity of
\(H(\mathrm{curl})\)-conforming spaces.

Given a simplex \(T\), let \(\mathring\Lambda^k(\overline T)\) be the
trace-free \(k\)-forms that are smoothly continuous on \(\overline T\).
(This is distinct from \(\mathring{\Lambda}^k(T)\) as defined in other
works, which is the subspace of \(\Lambda^k(T)\) of functions compactly
supported in the interior of \(T\).) Given a function space
\(X(T) \subset \Lambda^k(\overline T)\), identifying its trace-free
subspace
\(\mathring{X}(T) := X(T) \cap \mathring{\Lambda}^k(\overline T)\) is
important for finite element construction. These are the basis functions
that can be extended by zero outside of \(T\) while maintaining trace
continuity, so these are the basis functions associated with \(T\) in a
basis for the whole mesh \(\mathcal{T}_h\). Likewise, identifying
\(\mathring{X}(E)\) for a lower-dimensional simplex \(E\) is important
because the basis functions associated with \(E\) are extensions of
\(\mathring{X}(E)\) into the cells surrounding \(E\). Constructing
\(X(T)\) as the direct sum of \(\mathring{X}(T)\) and extensions of
\(\mathring{X}(E)\) for \(E\) in the boundary of \(T\) is a
\emph{geometric decomposition} of \(X(T)\).

There are two primary families of finite elements for the simplex \(T\)
in finite element exterior calculus, chosen for their homological
properties: the full polynomial spaces \(\mathcal{P}_r \Lambda^k(T)\)
and the trimmed spaces \(\mathcal{P}_r^- \Lambda^k(T)\), which are
defined in \cref{sec:notation}. These two families are intertwined,
because each is isomorphic to the family of trace-free subspaces of the
other: \begin{align}
\mathcal{P}_r \Lambda^k (T) &\cong \mathring{\mathcal{P}}_{r + k + 1}^-\Lambda^{n-k}(T);
\label{eq:iso1} \\
\mathcal{P}_r^- \Lambda^k (T) &\cong \mathring{\mathcal{P}}_{r + k}\Lambda^{n-k}(T). \label{eq:iso2}
\end{align}

The work of \textcite{arnold2006finite} is a standard reference to which
we direct the reader for more details on finite element exterior
calculus as well as proofs of isomorphisms \eqref{eq:iso1} and
\eqref{eq:iso2} \cite[theorems 4.16 and
4.22]{arnold2006finite}. In that work, and in subsequent work on the
topic of geometric decompositions
\cite{arnold2009geometric,licht2018basis}, the two isomorphisms are
treated separately, in that two distinct linear operators are defined,
\(h_T^k: \mathcal{P}_r \Lambda^k (T) \stackrel{\sim\,}{\to}\mathring{\mathcal{P}}_{r + k + 1}^-\Lambda^{n-k}(T)\)
and
\(h_T^{k,-}: \mathcal{P}_r^- \Lambda^k (T) \stackrel{\sim\,}{\to}\mathring{\mathcal{P}}_{r + k}\Lambda^{n-k}(T)\)
(though they are anonymous in \autocite{arnold2006finite}), and they are
proved to be bijections. Furthermore, the operators \(h_T^k\) and
\(h_T^{k,-}\) are defined with respect to particular choices of basis
functions, such that is not clear whether \(h_T^{k,-}\) acts pointwise
or not.

Subsequent works by the same authors and others
\cite{arnold2009geometric,licht2018basis} give direct constructions of
the trace-free subspaces that are based on Bernstein polynomials.
Bernstein polynomials have desirable symmetry properties and
low-complexity evaluation algorithms but are known to be ill-conditioned
for finite element operations relative to other bases. Giving these
tradeoffs, it is potentially useful to have a uniform method for
constructing trace-free \(k\)-forms that is not tied to any basis.

In \cite{arnold2009geometric}, the authors define two extension
operators from a boundary simplex \(f\) into a simplex \(g\),
\begin{align}
E_{f,g}^{r,k}: \mathcal{P}_r \Lambda^k (f) &\to \mathcal{P}_r \Lambda^k(g), \label{eq:ext} \\
E_{f,g}^{r,k,-}: \mathcal{P}_r^- \Lambda^k (f) &\to \mathcal{P}_r^- \Lambda^k(g), \label{eq:exttrimed}
\end{align} that have the required consistency for constructing local,
trace-continuous basis functions for the two spaces. These two extension
operators are not only defined differently, but are truly distinct in
that neither is an extension operator for the other: though
\(\mathcal{P}_r^- \Lambda^k (f) \subset \mathcal{P}_r \Lambda^k(f)\),
\(E_{f,g}^{r,k}[\mathcal{P}_r^- \Lambda^k(f)] \not\subset \mathcal{P}_r^- \Lambda^k(g)\),
and though
\(\mathcal{P}_r \Lambda^k (f) \subset \mathcal{P}_{r+1}^- \Lambda^k(f)\),
\(E_{f,g}^{r+1,k,-}[\mathcal{P}_r \Lambda^k(f)] \not\subset \mathcal{P}_r \Lambda^k(g)\).

\hypertarget{unifying-the-geometric-decompositions-of-mathcalp_r-lambdakt-and-mathcalp_r-lambdakt}{%
\subsection{\texorpdfstring{Unifying the geometric decompositions of
\(\mathcal{P}_r^-\Lambda^k(T)\) and
\(\mathcal{P}_r \Lambda^k(T)\)}{Unifying the geometric decompositions of \textbackslash{}mathcal\{P\}\_r\^{}-\textbackslash{}Lambda\^{}k(T) and \textbackslash{}mathcal\{P\}\_r \textbackslash{}Lambda\^{}k(T)}}\label{unifying-the-geometric-decompositions-of-mathcalp_r-lambdakt-and-mathcalp_r-lambdakt}}

\hypertarget{a-unified-construction-of-trace-free-k-forms}{%
\subsubsection{\texorpdfstring{A unified construction of trace-free
\(k\)-forms}{A unified construction of trace-free k-forms}}\label{a-unified-construction-of-trace-free-k-forms}}

This work defines an operator \(\mathring{\star}_T\) from differential
\(k\)-forms to trace-free \((n-k)\)-forms that acts pointwise and
induces the isomorphisms \eqref{eq:iso1} and \eqref{eq:iso2} and the
more general isomorphism for smooth functions on the closure of \(T\):
\[
\begin{aligned}
&\mathring{\star}_T: \Lambda^k(\overline T) \stackrel{\sim\,}{\to}
\mathring{\Lambda}^{n-k}(\overline T);
\\
&\mathring{\star}_T:\mathcal{P}_r \Lambda^k (T) \stackrel{\sim\,}{\to}\mathring{\mathcal{P}}_{r
+ k + 1}^-\Lambda^{n-k}(T);
\\
&\mathring{\star}_T:\mathcal{P}_r^- \Lambda^k (T) \stackrel{\sim\,}{\to}\mathring{\mathcal{P}}_{r
+ k}\Lambda^{n-k}(T).
\end{aligned}
\]

The definition of \(\mathring{\star}_T\) and proof of these claims are
given in the \cref{the-mathringstar_t-operator}. Here we translate
\(\mathring{\star}_T\) into operators which construct normal-free
functions (for \(H(\mathrm{div})\)-conforming finite elements) and
tangent-free functions (for \(H(\mathrm{curl})\)-conforming finite
elements).

\paragraph{Normal-free.}

Let \(\{E_i\}_{i=1}^{n_e}\) be the edges around \(T\) (\(n_e=3\) if
\(n=2\), \(n_e=6\) if \(n=3\)), let \(e_i = v_{i,1} - v_{i,0}\), where
\(\{v_{i,0},v_{i,1}\}\) are the vertices of edge \(E_i\), and let
\(|T|\) be the \(n\)-dimensional volume of \(T\). Then for any
\(u\in C^\infty(\overline T; \mathbb{R}^n)\), \[
\mathring{\star}_T u = \frac{1}{n!|T|}\sum_{i=1}^{n_e} \langle u,  e_i \rangle
\lambda_{E_i} e_i
\] is a function with normal-free boundary trace. In this translation of
\(\mathring{\star}_T\), the input vector-valued function is treated as a
function in \(\Lambda^1(\overline T)\), while the output is treated as a
function in \(\mathring{\Lambda}^{n-1}(\overline T)\). This operator
transforms any basis for a Nédélec edge element of the first (second)
kind into a basis for the normal-free subspace of a Nédélec face element
of the second (first) kind.

\paragraph{Tangent-free.}

Let \(\{F_i\}_{i=0}^n\) be the facets around \(T\), let \(\nu_i\) be the
unit normal vector for facet \(i\), and let \(\tilde{\nu}_i\) be
\(\nu_i\) scaled by \((n-1)!|F_i|\), the determinant of a matrix formed
from any set of \(n-1\) edge vectors around \(F_i\) (in 2D,
\(\tilde \nu_i\) is a 90 degree rotation of \(v_{i,1} - v_{i,0}\); in
3D, \(\tilde \nu_i\) is the cross product
\((v_{i,1} - v_{i,0})\times(v_{i,2} - v_{i,0})\)). Then for any
\(u \in C^\infty(\overline T; \mathbb{R}^n)\), \[
\mathring{\star}_T u = \frac{1}{n!|T|}\sum_{i=0}^{n} \langle u, \tilde\nu_i \rangle
\lambda_{F_i} \tilde\nu_i 
\] is a function with tangent-free boundary trace. This translation of
\(\mathring{\star}_T\) is the opposite of the last: the input is treated
as a function in \(\Lambda^{n-1}(\overline T)\) and the output in
\(\mathring{\Lambda}^1(\overline T)\). This operator transforms any
basis for a Nédélec face element of the first (second) kind into a basis
for the tangent-free subspace of a Nédélec edge element of the second
(first) kind.

\hypertarget{a-unified-extension-of-trace-free-k-forms}{%
\subsubsection{\texorpdfstring{A unified extension of trace-free
\(k\)-forms}{A unified extension of trace-free k-forms}}\label{a-unified-extension-of-trace-free-k-forms}}

This work also defines an extension operator
\(\dot{E}_{f,g}:\mathring{\Lambda}^k(\overline{f}) \to \Lambda^k(\overline{g})\)
that acts pointwise and is a consistent extension operator for full and
trimmed polynomials and indeed all differential \(k\)-forms: for every
simplex \(g\) in the triangulation \(\mathcal{T}_h\), \[
\begin{aligned}
\Lambda^k(\overline{g}) &= \bigoplus_{f\in\Delta(g)} \dot{E}_{f,g} [ \mathring{\Lambda}^k(\overline f)], \\
\mathcal{P}_r \Lambda^k(\overline{g}) &= \bigoplus_{f\in\Delta(g)} \dot{E}_{f,g} [ \mathring{\mathcal{P}}_r\Lambda^k(\overline f)], \\
\mathcal{P}_r^- \Lambda^k(\overline{g}) &= \bigoplus_{f\in\Delta(g)} \dot{E}_{f,g} [ \mathring{\mathcal{P}}_r^-\Lambda^k(\overline f)].
\end{aligned}
\] We do not translate \(\dot{E}_{f,g}\) into \(H(\mathrm{curl})\) or
\(H(\mathrm{div})\) notation here, but do note that it is based on a
decomposition that generalizes the decomposition present in the
hierarchical bases for \(\mathcal{P}_r \Lambda^k(T)\) of
\textcite{ainsworth2003hierarchic}. This work shows that, although
trimmed polynomials are not explicitly represented in those bases, their
extensions remain trimmed polynomials of the same order.

\hypertarget{preliminaries-and-notation}{%
\section{Preliminaries and notation}\label{preliminaries-and-notation}}

\label{sec:notation}

\paragraph{Integer maps, permutations, and multi-indices.}

We let \([a\range{b}]\) be the set \(\{a,a+1,\dots,b\}\) if \(b \geq a\)
and \([a\range{b}] = \emptyset\) otherwise. For a map
\(\rho: [a\range{b}] \to S\) and \(i\in [a\range{b}]\) we define
\(\rho \setminus \rho(i): [a\range{b-1}] \to S\) by \[
((\rho \setminus \rho(i))(a), \dots, (\rho \setminus \rho(i))(b-1)) = (\rho(a), \dots, \hat\rho(i), \dots \rho(b)),
\] where \(\hat\rho(i)\) is the omitted range index. Given any map
\(\rho: M \to S\) we let \([\![\rho ]\!]\) be the range set
\(\{\rho(i)\}_{i\in M}\).

Given any two subsets \(M\) and \(S\) of \(\mathbb{N}_0\) we let
\(\Sigma(M,S)\) be the set of increasing maps from \(M\) to \(S\). We
adopt the convention that \(\Sigma(\emptyset,S)\) contains only the
empty map \(\emptyset \to \emptyset\). If \(M\) is a subset of \(S\) and
\(\rho \in \Sigma(M,S)\), we define its complement to be
\(\rho^* \in \Sigma(S \setminus M, S)\) such that
\([\![\rho ]\!] \cup [\![\rho^* ]\!] = S\). The two complementary maps
together define a permutation: we let
\(\mathop{\mathrm{sign}}(\rho) \in \{-1,1\}\) be \(1\) if that
permutation is even and \(-1\) if it is odd.

When treating a map into \(\alpha: S \to \mathbb{N}_0\) as a
multi-index, we use the standard notation
\(|\alpha| = \sum_{i\in S} \alpha(i).\)

\paragraph{Exterior algebra.}

We let \(\mathrm{Alt}^k V\) denote the alternating \(k\)-linear forms,
or \emph{algebraic \(k\)-forms}, on the vector space \(V\). By
``alternating'' we mean that \(\omega \in \mathrm{Alt}^k V\) satisfies
\[
\omega(v_1,\dots,v_i,\dots,v_j,\dots,v_k)
=
-\omega(v_1,\dots,v_j,\dots,v_i,\dots,v_k),\quad i,j\in [1\range{k}].
\]

Given \(\omega \in \mathrm{Alt}^j V\) and \(\mu \in\mathrm{Alt}^k V\),
the \emph{wedge product} \(\omega \wedge \mu \in \mathrm{Alt}^{j+k} V\)
is defined by \[
(\omega \wedge \mu)(v_1,\dots,v_{j+k}) := \sum_{\sigma \in \Sigma([1\range{j}],[1\range{j+k}])} \mathop{\mathrm{sign}}(\sigma) \omega(v_{\sigma(1)}, \dots, v_{\sigma(j)}) \mu(v_{\sigma^*(j+1)}, \dots, v_{\sigma^*(j+k)}).
\] The wedge product is bilinear and anti-commuting,
\(\mu \wedge \omega = (-1)^{jk} \omega \wedge \mu\).

Given a linear map \(J : V \to W\), the \emph{pullback} is
\(J^*:\mathrm{Alt}^k W \to \mathrm{Alt}^k V\) defined by \[
(J^* \omega)(v_1, \dots, v_k) := \omega (J v_1, \dots, J v_k), \quad \omega \in \mathrm{Alt}^k W,
\] which distributes over the wedge product
\(J^*(\omega \wedge \mu) = (J^* \omega) \wedge (J^* \mu)\).

Given \(\omega \in \mathrm{Alt}^k V\) and \(v\in V\), the \emph{interior
product} \(\omega\lrcorner v \in \mathrm{Alt}^{k-1}V\) is defined by \[
(\omega\lrcorner v)(v_1,\dots, v_{k-1}) := \omega(v,v_1,\dots,v_{k-1}).
\] The interior product follows the product rule \[
(\omega \wedge \mu)\lrcorner v
= (\omega \lrcorner v) \wedge \mu + (-1)^j \omega \wedge (\mu \lrcorner v),
\quad \omega \in \mathrm{Alt}^j V, \mu \in \mathrm{Alt}^k V.
\]

The \emph{inner product} on \(\mathrm{Alt}^k V\) when \(V\) has its own
inner product is defined using any orthonormal basis \(\{q_k\}_{k=1}^n\)
of \(V\) as \[
\langle \omega, \mu \rangle := \sum_{\rho \in \Sigma([1\range{k}],[1\range{n}]))}
\omega(q_{\rho(1)}, \dots, q_{\rho(k)})
\mu(q_{\rho(1)}, \dots, q_{\rho(k)}), \quad \omega,\mu \in \mathrm{Alt}^k V.
\] If the space not only has an inner product but a sign convention that
the ordering \((q_1, \dots, q_k)\) is positively oriented, then the
\emph{volume form} \(\mathrm{vol}\in\mathrm{Alt}^n V\) is defined by \[
\mathrm{vol}(q_1, \dots, q_n) := 1.
\]

The \emph{Hodge star operator}
\(\star: \mathrm{Alt}^k V \stackrel{\sim\,}{\to}\mathrm{Alt}^{n-k} V\)
is an isometry defined by \[
\omega \wedge \mu := \langle \star \omega, \mu \rangle\mathrm{vol}, \quad \omega\in\mathrm{Alt}^k V, \mu
\in \mathrm{Alt}^{n-k}V.
\]

\paragraph{Exterior calculus.}

Exterior calculus can be defined for general manifolds: in this work we
only need to consider subsets \(\Omega\) of \(\mathbb{R}^n\) which have
the same tangent space \(T_x\Omega\) for each \(x\in \Omega\), either
because they are \(n\)-dimensional and \(T_x \Omega = \mathbb{R}^n\) or
because they are \(d\)-dimensional subsets of \(d\)-dimensional
hyperplanes.

The set of smooth, \(\mathrm{Alt}^k T_x \Omega\)-valued function on
\(\Omega \subset \mathbb{R}^n\) are \emph{differential \(k\)-forms} and
are denoted \(\Lambda^k(\Omega)\). The \emph{exterior derivative} on
\(k\)-forms,
\(\mathrm{d}: \Lambda^k(\Omega) \to \Lambda^{k+1}(\Omega)\), is defined
by \[
(\mathrm{d}\omega)_x(v_1, \dots, v_{k+1}) := \sum_{j=1}^{k+1} (-1)^{j+1}
\nabla_x (\omega_x(v_1, \dots, \hat{v}_j, \dots, v_{k+1})) \cdot v_j,
\] where again \(\hat{v}_j\) is the omitted argument. The exterior
derivative applied to each of the coordinate functions
\(x_1, \dots, x_n\) is a constant \(1\)-form, so we consider
\(\mathrm{d}x_i\) an algebraic 1-form for each \(i\). The set
\(\{\mathrm{d}x_i\}_{i=1}^n\) forms a basis of algebraic \(k\)-forms
that are dual to the canonical basis \(\{e_i\}_{i=1}^n\) of
\(\mathbb{R}^n\), \[
\mathrm{d}x_i(e_j) = \delta_{ij}, \quad i,j \in [1\range{n}].
\] Given a sequence \(\rho: [a\range{b}] \to S\), not necessarily
increasing, we let \(x_{\rho} := \prod_{i=a}^b x_{\rho(i)}\) and
\[(\mathrm{d}x)_{\rho} := \mathrm{d}x_{\rho(a)} \wedge \dots \wedge \mathrm{d}x_{\rho(b)},\]
with \(x_\rho := (\mathrm{d}x)_\rho := 1\) if
\([a\range{b}] = \emptyset\). With this notation the canonical
orthonormal basis of \(\mathrm{Alt}^k \mathbb{R}^n\) in terms of the
coordinate functions is written as
\(\{(\mathrm{d}x)_{\rho}\}_{\rho\in\Sigma([1\range{k}], [1\range{n}])}\),
which we call the set of \emph{coordinate \(k\)-forms}.

If \(\phi:T \to U\) is a smooth map, then
\(\phi^*: \Lambda^k(U) \to \Lambda^k(T)\) is the pullback for
differential \(k\)-forms, defined by \[
(\phi^* \omega)_x := 
(D\phi)_x^* \omega_{\phi(x)}, \quad \omega \in \Lambda^k(U),
\] where \((D\phi)_x\) is the Jacobian of \(\phi\) at \(x\).

We let \(\kappa_x : \Lambda^k(\Omega) \to \Lambda^{k-1}(\Omega)\) be the
\emph{Koszul operator} centered at \(x\), which is defined by \[
(\kappa_x \omega)_y := \omega_y \lrcorner (y-x).
\] An important fact for this work is that the Koszul operator commutes
with the pullback of an affine map up to a constant interior product:
given affine \(\phi: T \to \hat{T}\),
\begin{equation}\label{eq:kozsul-commute}
\kappa_x (\phi^* \omega)
=
\phi^*(\kappa_{\hat{x}} \omega - \omega \lrcorner (\phi(x) - \hat x)).
\end{equation}

\paragraph{Oriented simplices.}

In all that follows \(T\) is a positively oriented \(n\)-simplex in
\(\mathbb{R}^n\), \(|T|\) is its volume and \(\{\lambda_i\}_{i=0}^n\)
are its barycentric coordinates. We define \(\lambda_\rho\) and
\((\mathrm{d}\lambda)_{\rho}\) similarly to \(x_\rho\) and
\((\mathrm{d}x)_{\rho}\), and we refer to \((\mathrm{d}\lambda)_{\rho}\)
as a \emph{barycentric \(k\)-form}. Given maps
\(\rho: S \to [0\range{n}]\) and \(\alpha: S \to \mathbb{N}_0\) with the
same domain \(S\), a barycentric monomial is a polynomial in
\(\mathcal{P}_{|\alpha|}(T)\) defined by \[
(\lambda_\rho)^\alpha := \Pi_{i \in S} \lambda_{\rho(i)}^{\alpha(i)}.
\]

A positively oriented simplex has the property
\cite[Section 4.1]{arnold2006finite} that for any permutation
\(\pi:[0\range{n}] \to [0\range{n}]\) and any \(i\in [0\range{n}]\) \[
(\mathrm{d}\lambda)_{\pi \setminus \pi(i)}
 = (-1)^i\frac{\mathop{\mathrm{sign}}\pi}{n!|T|}\mathrm{vol}.
\]

\paragraph{Boundary simplices.}

For each \(\rho \in \Sigma([0\range{d}],[0\range{n}])\), the
\(d\)-simplex with vertices \(\{v_{\rho(0)},\dots,v_{\rho(d)}\}\) in the
boundary of \(T\) is \(f_\rho\). If
\(i_\rho: f_{\rho} \hookrightarrow T\) is the inclusion map, then the
trace operator
\(\mathrm{Tr}_{\rho}:\Lambda^k(\overline T) \to \Lambda^k(\overline {f_\rho})\)
is its pullback, \(\mathrm{Tr}_{\rho} := i_\rho^*\). In
\cref{sec:extension}, when we need the trace of
\(\omega\in\Lambda^k(\overline{f_\rho})\) on another boundary simplex
\(f_\sigma\), we denote it \(\mathrm{Tr}_{\rho,\sigma}\omega\).

The trace operator lets us more formally define the space
\(\mathring{\Lambda}^k(\overline T)\) from the introduction as \[
\mathring{\Lambda}^k(\overline T):= \{\omega\in \Lambda^k(\overline T):
\mathrm{Tr}_{\rho} u = 0, f_\rho \neq T\}.
\]

\paragraph{Whitney forms and trimmed polynomials.}

The Whitney form \(\phi_{\rho}\in \mathcal{P}_1 \Lambda^d (T)\)
associated with \(f_\rho\) is \[
\phi_{\rho} := \sum_{i=0}^d (-1)^i \lambda_{\rho(i)} (\mathrm{d}\lambda)_{\rho \setminus \rho(i)},
\] and the trimmed polynomials are spanned by the product of Whitney
forms and scalar polynomials, \[
\mathcal{P}_r^- \Lambda^k(T) = \mathrm{span}\{ a_\rho \phi_\rho: a_\rho \in
\mathcal{P}_{r-1}(T), \rho \in \Sigma([0\range{k}],[0\range{n}])\}.
\] An important fact about the trimmed polynomials in this work is that
\(\mathcal{P}_r^- \Lambda^k(T)\) is the largest subspace of
\(\mathcal{P}_r \Lambda^k(T)\) such that
\(\kappa_x p \in \mathcal{P}_r \Lambda^{k-1}(T)\) for every
\(p \in \mathcal{P}_r^- \Lambda^k(T)\) and every \(x\).

\hypertarget{trace-free-operators}{%
\section{Trace-free operators}\label{trace-free-operators}}

\hypertarget{previous-trace-free-operators-and-bases}{%
\subsection{Previous trace-free operators and
bases}\label{previous-trace-free-operators-and-bases}}

\label{sec:prev-trace-free}

We can now define the trace-free operators \(h_T^k\) and \(h_T^{k,-}\)
given in \autocite{arnold2006finite} that induce isomorphisms
\eqref{eq:iso1} and \eqref{eq:iso2}: \begin{align}
&h_T^k: \sum_{\substack{\rho\in\Sigma([0\range{n-k}],[0\range{n}]) \\ \rho(0) = 0}} a_\rho (\mathrm{d}
\lambda)_{\rho^*}
\mapsto
\sum_{\substack{\rho\in\Sigma([0\range{n-k}],[0\range{n}]) \\ \rho(0) = 0}} 
a_\rho \lambda_{\rho^*} \phi_{\rho} &[a_\rho \in \mathcal{P}_r(T)]; \label{eq:map1}\\
&h_T^{k,-}: \sum_{\rho\in\Sigma([0\range{k}],[0\range{n}])} a_\rho \phi_\rho
\mapsto
\sum_{\rho\in\Sigma([0\range{k}],[0\range{n}])} a_\rho \lambda_{\rho} (\mathrm{d}\lambda)_{\rho*}
&[a_\rho = a_\rho(\lambda_{\rho(0)},\lambda_{\rho(0)+1},\dots,\lambda_n) \in
\mathcal{P}_{r-1}(T)]. \label{eq:map2}
\end{align} The set
\(\{(\mathrm{d}\lambda)_{\rho^*}\}_{\rho\in\Sigma([0\range{n-k}],[0\range{n}]), \rho(0) = 0}\)
is a basis for \(\mathrm{Alt}^k \mathbb{R}^n\), so \(h_T^k\) can be
computed pointwise, but the set
\(\{(\phi_\rho)_x\}_{\rho \in \Sigma([0\range{k}],[0\range{n}])}\) has
\(\binom{n+1}{k}\) non-zero elements for \(x\) in the interior of \(T\)
while \(|\mathrm{Alt}^k \mathbb{R}^n| = \binom{n}{k}\), so the values of
the coefficients
\(\{a_\rho\}_{\rho\in \Sigma([0\range{k}],[0\range{n}])}\) at a point
are overdetermined in \cref{eq:map2}, making it appear that
\(h_T^{k,-}\) is not a pointwise operator. We show that this appearance
is deceptive because both \(h_T^k\) and \(h_T^{k,-}\) are equivalent to
the pointwise operator \(\mathring{\star}_T\) we define in
\cref{the-mathringstar_t-operator}.

These maps directly lead to the constructive bases given for the
trace-free subspace in \cite{arnold2009geometric,licht2018basis} in the
following way. In \eqref{eq:map1}, we can express
\(a_\rho \in \mathcal{P}_r(T)\) as a polynomial of \(n+1\) arguments in
the barycentric coordinates,
\(a_\rho = a_\rho(\lambda_0, \dots, \lambda_n)\), so that both
\eqref{eq:map1} and \eqref{eq:map2} use polynomials of barycentric
coordinates. If one expands each \(a_\rho\) in terms of a monomial basis
of the barycentric coordinates (which is equivalent up to a
multiplicative constant to the Bernstein polynomial basis), the right
hand side of \eqref{eq:map1} defines a basis for
\(\mathring{\mathcal{P}}_{r+k+1}^-\Lambda^{n-k}(T)\) where each basis
function is the product of a barycentric monomial and a Whitney form,
and the right hand side of \eqref{eq:map2} defines a basis for
\(\mathring{\mathcal{P}}_{r+k}\Lambda^{n-k}(T)\) which each basis
function is the product of a barycentric monomial and an \((n-k)\)-form
\((\mathrm{d}\lambda)_{\rho^*}\) for some
\(\rho \in \Sigma([0\range{k}],[0\range{n}])\).

\hypertarget{the-star_t-operator}{%
\subsection{\texorpdfstring{The \(\star_T\)
operator}{The \textbackslash{}star\_T operator}}\label{the-star_t-operator}}

The duality between \(\mathrm{Alt}^k \mathbb{R}^n\) and
\(\mathrm{Alt}^{n-k} \mathbb{R}^n\) is clearly relevant to the
isomorphisms \eqref{eq:iso1} and \eqref{eq:iso2}, but the Hodge star
operator
\(\star: \mathrm{Alt}^k \mathbb{R}^n \stackrel{\sim\,}{\to}\mathrm{Alt}^{n-k} \mathbb{R}^n\)
does not appear to play any role. While this is surprising at first, it
becomes clear that the Hodge star cannot be used because it is not
affine invariant: given a bijection
\(\phi: T \stackrel{\sim\,}{\to}\hat{T}\) between simplices, in general
\(\star \neq \phi^* \circ \star \circ \phi^{-*}\).

For each simplex \(T\), we define a bijection between
\(\mathrm{Alt}^k \mathbb{R}^n\) and \(\mathrm{Alt}^{n-k} \mathbb{R}^n\)
based instead on the barycentric coordinates of \(T\):
\begin{equation}\label{eq:start}
\star_T: \omega \mapsto \frac{n!|T|}{\sqrt{n+1}} \sum_{\rho \in
\Sigma([0\range{n-k-1}],[0\range{n}])} \star(\omega \wedge (\mathrm{d}\lambda)_{\rho}) (\mathrm{d}
\lambda)_{\rho}.
\end{equation}

\begin{lemma}\label{lem:star_t-bijection}
  $\star_T$ is a bijection, $\star_T : \mathrm{Alt}^k \mathbb{R}^n \stackrel{\sim\,}{\to}
  \mathrm{Alt}^{n-k} \mathbb{R}^n$.
\end{lemma}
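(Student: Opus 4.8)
The plan is to rewrite $\star_T$ as a nonzero scalar multiple of the composition of the Hodge star with a Gram-type operator, and then reduce bijectivity to the elementary fact that the barycentric $(n-k)$-forms span $\mathrm{Alt}^{n-k}\mathbb{R}^n$. First I would unwind the inner Hodge star in \eqref{eq:start}: for $\omega\in\mathrm{Alt}^k\mathbb{R}^n$ and $\rho\in\Sigma([0\range{n-k-1}],[0\range{n}])$ the domain $[0\range{n-k-1}]$ has $n-k$ elements, so $(\mathrm{d}\lambda)_\rho\in\mathrm{Alt}^{n-k}\mathbb{R}^n$, and the defining relation $\omega\wedge(\mathrm{d}\lambda)_\rho=\langle\star\omega,(\mathrm{d}\lambda)_\rho\rangle\,\mathrm{vol}$ together with $\star\,\mathrm{vol}=1$ gives $\star(\omega\wedge(\mathrm{d}\lambda)_\rho)=\langle\star\omega,(\mathrm{d}\lambda)_\rho\rangle$. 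Substituting into \eqref{eq:start},
\[
\star_T\omega=\frac{n!|T|}{\sqrt{n+1}}\sum_{\rho\in\Sigma([0\range{n-k-1}],[0\range{n}])}\langle\star\omega,(\mathrm{d}\lambda)_\rho\rangle\,(\mathrm{d}\lambda)_\rho,
\]
so $\star_T=c\,(P\circ\star)$ with $c=n!|T|/\sqrt{n+1}\neq 0$ and $P:\mathrm{Alt}^{n-k}\mathbb{R}^n\to\mathrm{Alt}^{n-k}\mathbb{R}^n$ the operator $P\eta:=\sum_\rho\langle\eta,(\mathrm{d}\lambda)_\rho\rangle\,(\mathrm{d}\lambda)_\rho$. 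Since the Hodge star $\star:\mathrm{Alt}^k\mathbb{R}^n\to\mathrm{Alt}^{n-k}\mathbb{R}^n$ is a bijection, it is enough to show $P$ is a bijection.

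Next I would note that $P$ is the Gram operator of the finite family $\{(\mathrm{d}\lambda)_\rho\}_\rho$: it is self-adjoint and positive semidefinite, with $\langle P\eta,\eta\rangle=\sum_\rho\langle\eta,(\mathrm{d}\lambda)_\rho\rangle^2$. Hence $P\eta=0$ forces $\langle\eta,(\mathrm{d}\lambda)_\rho\rangle=0$ for every $\rho$, i.e.\ $\eta\perp\mathrm{span}\{(\mathrm{d}\lambda)_\rho:\rho\in\Sigma([0\range{n-k-1}],[0\range{n}])\}$; so $P$ has trivial kernel — and therefore, being an endomorphism of a finite-dimensional space, is bijective — provided that span is all of $\mathrm{Alt}^{n-k}\mathbb{R}^n$. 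That spanning holds already for the subfamily with $0\notin\image{\rho}$: because $T$ is a nondegenerate $n$-simplex, $(\lambda_1,\dots,\lambda_n)$ is an affine coordinate system on $\mathbb{R}^n$, so $\{\mathrm{d}\lambda_1,\dots,\mathrm{d}\lambda_n\}$ is a basis of $\mathrm{Alt}^1\mathbb{R}^n$ and hence $\{(\mathrm{d}\lambda)_\sigma:\sigma\in\Sigma([1\range{n-k}],[1\range{n}])\}$ is a basis of $\mathrm{Alt}^{n-k}\mathbb{R}^n$; these are exactly the $(\mathrm{d}\lambda)_\rho$ with $\image{\rho}\subseteq[1\range{n}]$, so the full family spans a fortiori, completing the argument.

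I do not expect a genuine obstacle. The points needing care are the bookkeeping in the first step — that $(\mathrm{d}\lambda)_\rho$ really has degree $n-k$, that $\star\,\mathrm{vol}=1$, and that exactly the same index family reappears after pulling out the inner $\star$ — and confirming that the domain relabelling $\Sigma([1\range{n-k}],[1\range{n}])\hookrightarrow\Sigma([0\range{n-k-1}],[0\range{n}])$ really identifies a basis inside the spanning family. An alternative route is to prove that $\star_T$ is affine-covariant ($\star_T=\phi^*\circ\star_{\hat T}\circ\phi^{-*}$ for affine bijections $\phi$ between simplices, using that $\phi^*\mathrm{d}\hat\lambda_i=\mathrm{d}\lambda_i$ and $\det D\phi=|\hat T|/|T|$) and then verify bijectivity on one convenient reference simplex, but that still requires an explicit computation there, so the Gram-operator argument above is shorter.
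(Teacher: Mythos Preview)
Your proof is correct and follows essentially the same approach as the paper: both arguments reduce injectivity to a positive-semidefinite sum-of-squares form whose vanishing forces orthogonality to the family $\{(\mathrm{d}\lambda)_\rho\}_{\rho\in\Sigma([0\range{n-k-1}],[0\range{n}])}$, and then appeal to the fact that this family spans $\mathrm{Alt}^{n-k}\mathbb{R}^n$. Your explicit factorization $\star_T=c\,(P\circ\star)$ is just a repackaging of the paper's bilinear form $\langle\omega,\mu\rangle_{\star_T}=\star(\omega\wedge\star_T\mu)$, since that form equals $c\langle P(\star\omega),\star\mu\rangle$ after the same identity $\star(\omega\wedge(\mathrm{d}\lambda)_\rho)=\langle\star\omega,(\mathrm{d}\lambda)_\rho\rangle$ you used.
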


\begin{proof}
  We can define a symmetric bilinear form on $\mathrm{Alt}^k \mathbb{R}^n$,
  $$
    \langle \omega, \mu \rangle_{\star_T} := \star(\omega \wedge \star_T \mu)
    = 
    \frac{n!|T|}{\sqrt{n+1}} \sum_{\rho \in
    \Sigma([0\range{n-k-1}],[0\range{n}])} \star(\omega \wedge (\mathrm{d}\lambda)_{\rho})\ {\star(\mu
    \wedge (\mathrm{d}\lambda)_{\rho})}.
  $$
  If $\star_T \omega = 0$, then $\langle \omega, \omega \rangle_{\star_T} = 0$,
  and so $\star(\omega \wedge (\mathrm{d}\lambda)_{\rho}) = 0$ for all $\rho \in
  \Sigma([0\range{n-k-1}],[0\range{n}])$.  These $(n-k)$-forms span $\mathrm{Alt}^{n-k} \mathbb{R}^n$, and
  $\mathrm{Alt}^{n-k} \mathbb{R}^n \cong (\mathrm{Alt}^n \mathbb{R}^n)^*$, so $\omega = 0$.
\end{proof}

Because \(\star_T\) is defined by a simplex instead of being universal,
it can have a useful form of affine invariance.

\begin{lemma}\label{lem:affinv}
  Given an affine bijection $\phi:T \stackrel{\sim\,}{\to}\hat{T}$, $\star_T = (D\phi)^* \circ \star_{\hat{T}} \circ (D\phi)^{-*}$.
\end{lemma}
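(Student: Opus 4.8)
The plan is to establish the pointwise identity $\star_T\big((D\phi)^*\hat\omega\big) = (D\phi)^*\big(\star_{\hat T}\hat\omega\big)$ for every $\hat\omega \in \Alt^k\RR^n$; precomposing with $(D\phi)^{-*}$, which exists since $D\phi$ is invertible, then gives the stated formula. We may assume $\phi$ sends the vertex $v_i$ of $T$ to the vertex $\hat v_i$ of $\hat T$ for each $i$. I would first collect three elementary facts. (i) Because $\lambda_i = \hat\lambda_i \circ \phi$ and both sides are affine, the chain rule gives $(D\phi)^*\dd\hat\lambda_i = \dd\lambda_i$, hence $(D\phi)^*(\dd\hat\lambda)_\rho = (\dd\lambda)_\rho$ for every $\rho \in \SigmaO{n-k-1}{n}$. (ii) The pullback distributes over the wedge product. (iii) For any top-degree form $\eta \in \Alt^n\RR^n$ one has $\star\big((D\phi)^*\eta\big) = \det(D\phi)\,\star\eta$: write $\eta = (\star\eta)\vol$ and use $(D\phi)^*\vol = \det(D\phi)\,\vol$; moreover $\det(D\phi) = |\hat T|/|T| > 0$, since $\phi$ maps the positively oriented $T$ onto the positively oriented $\hat T$ respecting the vertex order.

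With these in hand the rest is bookkeeping. Set $\omega := (D\phi)^*\hat\omega$ and apply the definition \eqref{eq:start} of $\star_T$ to $\omega$. For each $\rho$, facts (i) and (ii) give $\omega \wedge (\dd\lambda)_\rho = (D\phi)^*\big(\hat\omega \wedge (\dd\hat\lambda)_\rho\big)$, and then fact (iii) converts the scalar coefficient into $\star\big(\omega \wedge (\dd\lambda)_\rho\big) = \tfrac{|\hat T|}{|T|}\,\star\big(\hat\omega \wedge (\dd\hat\lambda)_\rho\big)$. The factor $|\hat T|/|T|$ does not depend on $\rho$, so it comes out of the sum and combines with the prefactor $n!|T|/\sqrt{n+1}$ to produce $n!|\hat T|/\sqrt{n+1}$. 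Finally, using fact (i) to rewrite each $(\dd\lambda)_\rho$ as $(D\phi)^*(\dd\hat\lambda)_\rho$ and pulling $(D\phi)^*$ out of the finite sum leaves exactly $(D\phi)^*\!\left[\tfrac{n!|\hat T|}{\sqrt{n+1}}\sum_{\rho}\star\big(\hat\omega \wedge (\dd\hat\lambda)_\rho\big)(\dd\hat\lambda)_\rho\right] = (D\phi)^*\big(\star_{\hat T}\hat\omega\big)$, which is the desired identity.

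The only nonformal step is fact (iii): it is precisely here that the well-known failure of affine invariance of the bare Hodge star intervenes, contributing the Jacobian $\det(D\phi) = |\hat T|/|T|$. The whole point of the argument — and the reason the normalization $n!|T|$ appears in \eqref{eq:start} in the first place — is that this Jacobian exactly cancels the mismatch between the volume prefactors attached to $\star_T$ and $\star_{\hat T}$, so that an operator built from the non-invariant $\star$ nevertheless transforms correctly under affine pullback. I anticipate no real obstacle beyond care with orientations (so that $\det(D\phi)$ is the positive volume ratio rather than its negative) and the observation that the index set $\SigmaO{n-k-1}{n}$ over which the defining sum runs is the same for $T$ and $\hat T$.
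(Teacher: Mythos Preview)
Your proof is correct and follows essentially the same approach as the paper: both arguments rest on the three facts that $(D\phi)^*(\dd\hat\lambda)_\rho = (\dd\lambda)_\rho$, that pullback distributes over the wedge product, and that the Hodge star on top-degree forms picks up the Jacobian $\det(D\phi) = |\hat T|/|T|$, which cancels against the volume prefactor in \eqref{eq:start}. The only cosmetic difference is direction—the paper starts from $(D\phi)^*\circ\star_{\hat T}\circ(D\phi)^{-*}$ and simplifies to $\star_T$, whereas you start from $\star_T\circ(D\phi)^*$ and arrive at $(D\phi)^*\circ\star_{\hat T}$—and you are slightly more explicit about the vertex-correspondence and orientation assumptions that the paper leaves implicit.
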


\begin{proof}
  If $\hat{\lambda}$ are the barycentric coordinates of $\hat{T}$, then $(\mathrm{d}
  \hat{\lambda})_{\rho} = (D\phi)^{-*} (\mathrm{d}\lambda)_{\rho}$ and $(D\phi)^{-*} \mathrm{vol}= (|T|
  / |\hat{T}|) \mathrm{vol}$.  Hence
  $$
  \begin{aligned}
  &((D\phi)^* \circ \star_{\hat{T}}\circ (D\phi)^{-*}) (\omega) \\
  &=
  (D\phi)^* \left(
    \frac{n! |\hat{T}|}{\sqrt{n+1}}
    \sum_{\rho\in\Sigma([0\range{n-k-1}],[0\range{n}])}
      \star((D\phi)^{-*}\omega \wedge (\mathrm{d}\hat{\lambda})_{\rho})\ 
      (\mathrm{d}\hat{\lambda})_{\rho}
  \right)
  \\
  &=
  (D\phi)^* \left(
    \frac{n! |\hat{T}|}{\sqrt{n+1}}
    \sum_{\rho\in\Sigma([0\range{n-k-1}],[0\range{n}])}
      \star((D\phi)^{-*}\omega \wedge (D\phi)^{-*}(\mathrm{d}\lambda)_{\rho})\ 
      (D\phi)^{-*}(\mathrm{d}\lambda)_{\rho}
  \right)
  \\
  &=
    \frac{n! |\hat{T}|}{\sqrt{n+1}}
    \sum_{\rho\in\Sigma([0\range{n-k-1}],[0\range{n}])}
      \star(D\phi)^{-*}(\omega \wedge (\mathrm{d}\lambda)_{\rho})\ 
      (\mathrm{d}\lambda)_{\rho}
  \\
  &=
    \frac{n! |T|}{\sqrt{n+1}}
    \sum_{\rho\in\Sigma([0\range{n-k-1}],[0\range{n}])}
      \star(\omega \wedge (\mathrm{d}\lambda)_{\rho})\ (\mathrm{d}\lambda)_{\rho}
  \\
  &= \mathring{\star}_T \omega.
  \end{aligned}
  $$
\end{proof}

\begin{lemma}
  If $T_{\mathrm{eq}}$ is an equilateral simplex with edge length $\sqrt{2}$,
  then $\star_{T_{\mathrm{eq}}} = \star$.
\end{lemma}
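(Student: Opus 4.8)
The plan is to rewrite $\star_T$ as a scalar multiple of a \emph{frame operator} and then show that, for the equilateral simplex of edge length $\sqrt2$, both the scalar and the frame operator are trivial. The first step is to note that by the defining property of the Hodge star, $\star(\omega\wedge(\mathrm{d}\lambda)_\rho)=\langle\star\omega,(\mathrm{d}\lambda)_\rho\rangle$ for any $\omega\in\mathrm{Alt}^k\mathbb{R}^n$ and any $(n-k)$-form $(\mathrm{d}\lambda)_\rho$ (since $\star\,\mathrm{vol}=1$), so \eqref{eq:start} reads $\star_T=\tfrac{n!|T|}{\sqrt{n+1}}\,A_k\circ\star$, where
\[
A_k:\mathrm{Alt}^{n-k}\mathbb{R}^n\to\mathrm{Alt}^{n-k}\mathbb{R}^n,\qquad A_k\mu:=\sum_{\rho\in\Sigma([0\range{n-k-1}],[0\range{n}])}\langle\mu,(\mathrm{d}\lambda)_\rho\rangle\,(\mathrm{d}\lambda)_\rho
\]
is the frame operator of the family $\{(\mathrm{d}\lambda)_\rho\}_\rho$ of $(n-k)$-fold wedges of the $\mathrm{d}\lambda_i$. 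Because $\star$ is onto, it then suffices to prove, for $T=T_{\mathrm{eq}}$, that $\tfrac{n!|T_{\mathrm{eq}}|}{\sqrt{n+1}}=1$ and that $A_k=\mathrm{Id}$ for every $k$.

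The volume identity is routine: the regular $n$-simplex of edge length $\sqrt2$ is congruent to $\mathrm{conv}\{\hat e_0,\dots,\hat e_n\}\subset\mathbb{R}^{n+1}$ (whose vertices are pairwise at distance $\sqrt2$), and this simplex has $n$-dimensional volume $\sqrt{n+1}/n!$, so the prefactor collapses to $1$. For $A_k$ I would first record the input that makes the edge length $\sqrt2$ special, namely that a direct computation — from the heights of $T_{\mathrm{eq}}$, or equivalently from the model above — gives $\langle\mathrm{d}\lambda_i,\mathrm{d}\lambda_j\rangle=\delta_{ij}-\tfrac1{n+1}$. Since the $\mathrm{d}\lambda_i$ (via their Riesz representatives) span an $n$-dimensional space subject only to $\sum_i\mathrm{d}\lambda_i=0$, this Gram matrix lets us identify, by an isometry, $\mathbb{R}^n$ with the hyperplane $H:=\{x\in\mathbb{R}^{n+1}:\sum_ix_i=0\}$ and $\mathrm{d}\lambda_i$ with $P\hat e_i$, where $P:\mathbb{R}^{n+1}\to H$ is the orthogonal projection and $\{\hat e_i\}$ the standard basis (indeed $\langle P\hat e_i,P\hat e_j\rangle=\delta_{ij}-\tfrac1{n+1}$ and $\sum_iP\hat e_i=0$).

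Under this identification $(\mathrm{d}\lambda)_\rho$ becomes $(P\hat e_{\rho(0)})\wedge\dots\wedge(P\hat e_{\rho(n-k-1)})=(\textstyle\bigwedge^{n-k}P)(\hat e_{\rho(0)}\wedge\dots\wedge\hat e_{\rho(n-k-1)})$. Now $\bigwedge^{n-k}P$ is a self-adjoint idempotent on $\bigwedge^{n-k}\mathbb{R}^{n+1}$, hence the orthogonal projection onto $\bigwedge^{n-k}H$; and the images under an orthogonal projection of an orthonormal basis always form a Parseval frame for the range. Thus $\{(\mathrm{d}\lambda)_\rho\}_\rho$ is a Parseval frame for $\mathrm{Alt}^{n-k}\mathbb{R}^n\cong\bigwedge^{n-k}H$, whose frame operator is the identity, so $A_k=\mathrm{Id}$. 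Together with the volume identity this yields $\star_{T_{\mathrm{eq}}}\omega=\star\omega$.

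I expect the only real friction to be bookkeeping with constants: checking that edge length exactly $\sqrt2$ is what forces $\langle\mathrm{d}\lambda_i,\mathrm{d}\lambda_j\rangle=\delta_{ij}-\tfrac1{n+1}$ (equivalently $\mathrm{d}\lambda_i\leftrightarrow P\hat e_i$), and that $|T_{\mathrm{eq}}|=\sqrt{n+1}/n!$ so that the prefactor is $1$. An alternative to the Parseval-frame step, which avoids the dilation to $\mathbb{R}^{n+1}$, is to use \cref{lem:affinv}: the isometries of $T_{\mathrm{eq}}$ make $A_k$ commute with the induced $S_{n+1}$-action on $\mathrm{Alt}^{n-k}\mathbb{R}^n$, so $A_k=c_k\,\mathrm{Id}$ by Schur's lemma since $\bigwedge^{n-k}$ of the standard representation of $S_{n+1}$ is absolutely irreducible; one then fixes $c_k$ by a trace, $c_k\binom nk=\sum_\rho\|(\mathrm{d}\lambda)_\rho\|^2=\binom{n+1}{k+1}\cdot\tfrac{k+1}{n+1}=\binom nk$, so $c_k=1$.
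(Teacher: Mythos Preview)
Your proposal is correct and follows essentially the same approach as the paper: both reduce $\star_{T_{\mathrm{eq}}}=\star$ to (i) the volume identity $|T_{\mathrm{eq}}|=\sqrt{n+1}/n!$ and (ii) the fact that the barycentric $(n-k)$-forms $\{(\mathrm{d}\lambda)_\rho\}$ form a Parseval (normalized tight) frame for $\mathrm{Alt}^{n-k}\mathbb{R}^n$. The paper obtains (ii) by observing that $D\lambda$ is an isometry onto the standard barycentric simplex so the Gram matrix of the $(\mathrm{d}\lambda)_\rho$ is an orthogonal projection, and then cites a frame-theory characterization; you obtain it by the dual identification $\mathrm{d}\lambda_i\leftrightarrow P\hat e_i$ and the fact that $\bigwedge^{n-k}P$ is an orthogonal projection, which is the same mechanism viewed from the other side. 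Your Schur's-lemma alternative, with the trace computation $\sum_\rho\|(\mathrm{d}\lambda)_\rho\|^2=\binom{n+1}{k+1}\cdot\tfrac{k+1}{n+1}=\binom nk$, is a genuinely different route to $A_k=\mathrm{Id}$ that avoids the frame language entirely; it is a nice complement, though it relies on the (true but nontrivial) irreducibility of the exterior powers of the standard $S_{n+1}$-representation.
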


\begin{proof}
If we take the convention that the canonical basis of $\mathbb{R}^{n+1}$ is numbered
$e_0, \dots, e_n$, then for every simplex the barycentric $k$-forms are
pullbacks of the coordinate $k$-forms,
$$(\mathrm{d}\lambda)_{\rho} = (D\lambda)^* (\mathrm{d}x)_\rho, \quad \rho\in\Sigma([0\range{k}],[0\range{n}]).$$ In the case of $T_{\mathrm{eq}}$, the Jacobian $D\lambda$ is
isometric: $\|(D\lambda)v\|_2 = \|v_2\|$ for all $v \in \mathbb{R}^n$. This is clear
when we think of $\lambda$ as mapping $T$ to the standard barycentric simplex,
which is the simplex in the positive orthant of $\mathbb{R}^{n+1}$ connecting the $e_i$ unit vectors, which is also equilateral and has the same edge length.

Because $(D\lambda)$ is isometric, the $\binom{n+1}{k+1} \times \binom{n+1}{k+1}$ matrix of inner products between barycentric $k$-forms,
$$
M_{\rho,\sigma} := \langle (\mathrm{d}\lambda)_{\rho}, (\mathrm{d}\lambda)_{\sigma} \rangle = \langle(D\lambda)^*(\mathrm{d}x)_\rho,(D\lambda)^*(\mathrm{d}x)_\sigma\rangle,\quad\rho,\sigma\in\Sigma([0\range{k}], [0\range{n}]),
$$
is an orthogonal projection matrix for every $k$.
This implies that the set $\{(\mathrm{d}\lambda)_{\rho}\}_{\rho\in\Sigma([0\range{k}],[0\range{n}])}$
is a normalized tight frame \cite[Theorem 2.5]{waldron2018introduction}, that
is
$$
\omega = \sum_{\rho\in\Sigma([0\range{k}],[0\range{n}])} \langle \omega, (\mathrm{d}\lambda)_{\rho}
\rangle (\mathrm{d}\lambda)_{\rho}, \quad \omega \in \mathrm{Alt}^k \mathbb{R}^n.
$$
To compute $\star_{T_{\mathrm{eq}}}$ for $\omega\in\mathrm{Alt}^k\mathbb{R}^n$, we apply the above fact to the $(n-k)$-forms, and note that $|T_{\mathrm{eq}}| = \sqrt{n+1}/{n!}$ so that the leading constant in \eqref{eq:start} is cancelled, to see
$$
\star_{T_{\mathrm{eq}}} \omega = \sum_{\rho\in\Sigma([0\range{n-k-1}],[0\range{n}])}\star(\omega
\wedge (\mathrm{d}\lambda)_{\rho}) (\mathrm{d}\lambda)_{\rho}
= \sum_{\rho\in\Sigma([0\range{n-k-1}],[0\range{n}])}\langle \star\omega, (\mathrm{d}\lambda)_{\rho}
\rangle (\mathrm{d}\lambda)_{\rho}
= \star \omega.
$$
\end{proof}

These lemmas imply that, like \(\star\) itself,
\(\star_T \circ \star_T = (-1)^{k(n-k)}\).

\begin{corollary}\label{cor:twice}
$\star_T \circ \star_T = (-1)^{k(n-k)}.$
\end{corollary}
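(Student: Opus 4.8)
The plan is to deduce the identity from the two preceding lemmas together with the classical involution property of the Hodge star, $\star \circ \star = (-1)^{k(n-k)}$ on $\mathrm{Alt}^k \mathbb{R}^n$. The mechanism is that \cref{lem:affinv} lets us transport $\star_T$ to any affinely equivalent simplex, and the equilateral simplex $T_{\mathrm{eq}}$ of edge length $\sqrt{2}$ is precisely one on which $\star_T$ reduces to the ordinary Hodge star; so the known behavior of $\star$ can be pushed back to $\star_T$.

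Concretely, fix an affine bijection $\phi: T \to T_{\mathrm{eq}}$, which exists since any two $n$-simplices in $\mathbb{R}^n$ are affinely equivalent. By \cref{lem:affinv} and the lemma identifying $\star_{T_{\mathrm{eq}}}$ with $\star$, we have on $\mathrm{Alt}^k \mathbb{R}^n$
\[
\star_T = (D\phi)^* \circ \star \circ (D\phi)^{-*},
\]
and the same formula holds on $\mathrm{Alt}^{n-k}\mathbb{R}^n$. Composing the two copies and using $(D\phi)^{-*} \circ (D\phi)^* = \mathrm{id}$ on $\mathrm{Alt}^{n-k}\mathbb{R}^n$,
\[
\star_T \circ \star_T
= (D\phi)^* \circ \star \circ (D\phi)^{-*} \circ (D\phi)^* \circ \star \circ (D\phi)^{-*}
= (D\phi)^* \circ (\star \circ \star) \circ (D\phi)^{-*}.
\]
Substituting $\star \circ \star = (-1)^{k(n-k)}\,\mathrm{id}$ on $\mathrm{Alt}^k \mathbb{R}^n$, pulling the scalar out of the pullbacks, and cancelling $(D\phi)^* \circ (D\phi)^{-*} = \mathrm{id}$ gives $\star_T \circ \star_T = (-1)^{k(n-k)}\,\mathrm{id}$.

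There is no real obstacle here; the only things to keep straight are bookkeeping — that the middle cancellation of $(D\phi)^{-*}$ against $(D\phi)^*$ happens on $\mathrm{Alt}^{n-k}\mathbb{R}^n$, the target of the first application of $\star_T$, and that the exponent $(-1)^{k(n-k)}$ is the one for applying $\star$ to $k$-forms and then to the resulting $(n-k)$-forms. A direct verification from the defining formula \eqref{eq:start}, expanding $\star_T(\star_T\omega)$ and invoking the tight-frame identity for the barycentric $(n-k)$-forms, is also available but substantially longer; routing through affine invariance is cleaner and reuses the lemmas already established.
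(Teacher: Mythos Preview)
Your proof is correct and follows essentially the same route as the paper: fix an affine bijection $\phi: T \to T_{\mathrm{eq}}$, conjugate $\star_T$ to $\star_{T_{\mathrm{eq}}} = \star$ via \cref{lem:affinv}, cancel the inner pullbacks, and invoke $\star\circ\star = (-1)^{k(n-k)}$. The paper's argument is the same computation written as a single chain of equalities.
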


\begin{proof}
Let $\phi:T \stackrel{\sim\,}{\to}T_{\mathrm{eq}}$ be an affine bijection.  Then
$$
\star_T \circ \star_T = (D\phi)^* \circ \star_{T_\mathrm{eq}} \circ (D\phi)^{-*} \circ
(D\phi)^* \circ \star_{T_\mathrm{eq}} \circ (D\phi)^{-*} = (D\phi)^* \circ \star \circ \star
\circ (D\phi)^{-*} = (-1)^{k(n-k)}.
$$
\end{proof}

\hypertarget{the-mathringstar_t-operator}{%
\subsection{\texorpdfstring{The \(\mathring{\star}_T\)
operator}{The \textbackslash{}mathring\{\textbackslash{}star\}\_T operator}}\label{the-mathringstar_t-operator}}

Having defined an isomorphism \(\star_T\) between algebraic \(k\)- and
\((n-k)\)-form with the desired variance, we are now prepared to define
an similar isomorphism between differential \(k\)- and trace-free
\((n-k)\)-forms. A similar construction to \eqref{eq:start} defines a
linear operator
\(\mathring{\star}_T: \Lambda^k(\overline T) \to \mathring{\Lambda}^{n-k}(\overline T)\)
by multiplying the forms summed in \eqref{eq:start} by their
complementary bubble functions. We define
\begin{equation}\label{eq:ringo}
\mathring{\star}_T: \omega \mapsto
  n!|T|
    \sum_{\rho \in \Sigma([0\range{n-k-1}],[0\range{n}])}
      \star(\omega \wedge (\mathrm{d}\lambda)_{\rho})
      \lambda_{\rho^*}
      (\mathrm{d}\lambda)_{\rho}.
\end{equation}

\begin{lemma}
  $\mathring{\star}_T$ is affine invariant: given affine bijection $\phi:T \stackrel{\sim\,}{\to}\hat T$,
  $\mathring{\star}_T = \phi^* \circ \mathring{\star}_{\hat T} \circ \phi^{-*}.$
\end{lemma}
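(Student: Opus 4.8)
The plan is to transcribe the proof of \cref{lem:affinv}, keeping track of the two features in which $\ringo_T$ differs from $\star_T$: its argument is now a differential $k$-form rather than an algebraic one, so the differential pullback $\phi^*$ must be traded for the constant Jacobian pullback $(D\phi)^*$ before the algebraic computation of \cref{lem:affinv} can be reused; and each summand of \eqref{eq:ringo} carries the extra scalar factor $\lambda_{\rho^*}$, which must be followed through the pullback.

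First I would collect the facts about an affine bijection $\phi:T\to\hat T$, writing $\hat\lambda$ for the barycentric coordinates of $\hat T$. Since $\hat\lambda_i\circ\phi$ and $\lambda_i$ are affine functions agreeing at the $n+1$ vertices of $T$, we have $\phi^*\hat\lambda_i=\lambda_i$; hence $\phi^*(\mathrm{d}\hat\lambda)_\rho=(\mathrm{d}\lambda)_\rho$ and, these being constant forms with $\phi$ affine, $(\mathrm{d}\hat\lambda)_\rho=(D\phi)^{-*}(\mathrm{d}\lambda)_\rho$; and since $\phi^*$ is a ring homomorphism on functions, $\phi^*\hat\lambda_{\rho^*}=\lambda_{\rho^*}$. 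I also need, exactly as in \cref{lem:affinv}, that $(D\phi)^{-*}\vol=(|T|/|\hat T|)\,\vol$, so that $\star\bigl((D\phi)^{-*}\eta\bigr)=(|T|/|\hat T|)\,\star\eta$ for every algebraic $n$-form $\eta$, both sides being the scalar coefficient of $\eta$ against $\vol$. Finally, because $\phi$ is affine, $(D\phi)_x=D\phi$ is constant, so $(\phi^{-*}\omega)_{\phi(x)}=(D\phi)^{-*}\omega_x$.

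Then I would evaluate $\bigl(\phi^*\circ\ringo_{\hat T}\circ\phi^{-*}\bigr)\omega$ at a point $x$ by substituting $\phi^{-*}\omega$ into \eqref{eq:ringo} for $\hat T$ and applying $\phi^*$. Summand by summand this produces the scalar $\star\bigl((\phi^{-*}\omega)_{\phi(x)}\wedge(\mathrm{d}\hat\lambda)_\rho\bigr)$, the scalar $\hat\lambda_{\rho^*}(\phi(x))$, and the form $\phi^*(\mathrm{d}\hat\lambda)_\rho$. Using $(\phi^{-*}\omega)_{\phi(x)}=(D\phi)^{-*}\omega_x$ together with $(\mathrm{d}\hat\lambda)_\rho=(D\phi)^{-*}(\mathrm{d}\lambda)_\rho$, distributing $(D\phi)^{-*}$ over the wedge, and applying the $\vol$-scaling identity, the first scalar becomes $(|T|/|\hat T|)\,\star\bigl(\omega_x\wedge(\mathrm{d}\lambda)_\rho\bigr)$; the second scalar becomes $\lambda_{\rho^*}(x)$ and the form becomes $(\mathrm{d}\lambda)_\rho$; and the prefactor $n!\,|\hat T|$ of $\ringo_{\hat T}$ absorbs the factor $|T|/|\hat T|$ to leave $n!\,|T|$. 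The result is term-for-term the definition \eqref{eq:ringo} of $\ringo_T\omega$ at $x$.

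The only genuinely new bookkeeping, relative to \cref{lem:affinv}, is the passage between the pointwise pullback of the differential form $\omega$ and the pullback of constant algebraic forms --- verifying $(\phi^{-*}\omega)_{\phi(x)}=(D\phi)^{-*}\omega_x$, and that $\phi^*$ of a scalar function times a constant form factors as composition with $\phi$ on the scalar and $(D\phi)^*$ on the form. Once this is pinned down the computation is line-for-line that of \cref{lem:affinv} with $\lambda_{\rho^*}$ carried along inertly, so I expect no real obstacle beyond it.
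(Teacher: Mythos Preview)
Your proposal is correct and is exactly the approach the paper intends: its own proof consists of the single sentence ``The proof is essentially the same as for lemma~\ref{lem:affinv},'' and you have carried out precisely that computation, tracking the additional scalar factor $\lambda_{\rho^*}$ and the passage between the differential pullback $\phi^*$ and the constant Jacobian pullback $(D\phi)^*$.
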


\begin{proof}
  The proof is essentially the same as for lemma~\ref{lem:affinv}.
\end{proof}

\begin{lemma}\label{lem:injection}
  $\mathring{\star}_T: \Lambda^k(\overline T) \to
  \mathring{\Lambda}^{n-k} (\overline T)$ is an injection.
\end{lemma}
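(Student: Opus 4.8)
The plan is to establish injectivity pointwise on the interior of $T$ and then appeal to continuity. Assume $\ringo_T\omega = 0$. Since $\Lambda^k(\overline T)$ consists of functions continuous on $\overline T$ and the interior of $T$ is dense in $\overline T$, it suffices to prove $\omega_x = 0$ for each interior point $x$. Reading $\omega_x$ directly off the vanishing of \eqref{eq:ringo} at $x$ does not work, because the $(n-k)$-forms $\{(\dd\lambda)_\rho\}_{\rho\in\SigmaO{n-k-1}{n}}$ are linearly dependent — there are $\binom{n+1}{n-k}$ of them, spanning the $\binom{n}{n-k}$-dimensional space $\Alt^{n-k}\RR^n$ — so the coefficients $n!\,|T|\,\star(\omega_x\wedge(\dd\lambda)_\rho)\,\lambda_{\rho^*}(x)$ are not forced to vanish individually.

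The way around this is to pair the identity $(\ringo_T\omega)_x = 0$ with $\omega_x$ itself and produce a sum of squares. First, $\star(\eta\wedge\mu) = \langle\star\eta,\mu\rangle$ whenever $\eta\in\Alt^k\RR^n$, $\mu\in\Alt^{n-k}\RR^n$ (apply $\star$ to $\eta\wedge\mu = \langle\star\eta,\mu\rangle\vol$ and use $\star\vol = 1$), so in particular $\star(\omega_x\wedge(\dd\lambda)_\rho) = \langle\star\omega_x,(\dd\lambda)_\rho\rangle$. Wedging \eqref{eq:ringo} at $x$ with $\omega_x$ and applying $\star$ then gives
\[
0 \;=\; \star\!\big(\omega_x\wedge(\ringo_T\omega)_x\big) \;=\; n!\,|T|\sum_{\rho\in\SigmaO{n-k-1}{n}}\langle\star\omega_x,(\dd\lambda)_\rho\rangle^2\,\lambda_{\rho^*}(x).
\]
Because $x$ lies in the interior, every barycentric coordinate, and hence every product $\lambda_{\rho^*}(x)$, is strictly positive; as the sum consists of non-negative terms, each one vanishes, so $\langle\star\omega_x,(\dd\lambda)_\rho\rangle = 0$ for all $\rho\in\SigmaO{n-k-1}{n}$. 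Since these $(n-k)$-forms span $\Alt^{n-k}\RR^n$ (the fact invoked in the proof of \cref{lem:star_t-bijection}), $\star\omega_x = 0$, and therefore $\omega_x = 0$ as $\star$ is a bijection. Letting $x$ range over the interior and using continuity yields $\omega = 0$.

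The lemma is not hard once the right test form is chosen; the single substantive point is replacing the linearly-dependent expansion \eqref{eq:ringo} by the sum of squares obtained from pairing with $\omega_x$, and the step from interior points to all of $\overline T$ is automatic. The same computation, integrated over $T$, shows that $(\omega,\mu)\mapsto\int_T\star(\mu\wedge\ringo_T\omega)\,\dd x$ is a symmetric positive-definite bilinear form on $\Lambda^k(\overline T)$ with $\ringo_T$ as its associated operator, which is an alternative phrasing of the same argument.
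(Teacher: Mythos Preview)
Your injectivity argument is correct and is essentially the paper's own: form the scalar $\star(\omega_x\wedge(\ringo_T\omega)_x)$, recognise it as a nonnegative combination of the squares $\big(\star(\omega_x\wedge(\dd\lambda)_\rho)\big)^2$ with strictly positive weights $\lambda_{\rho^*}(x)$ in the interior, conclude each coefficient vanishes, and finish by continuity. The paper packages the same computation as the bilinear form $\langle\omega,\mu\rangle_{\ringo_T}:=\int_T\omega\wedge\ringo_T\mu$, which is exactly the integrated version you mention at the end.

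One omission: the lemma asserts that $\ringo_T$ lands in $\mathring{\Lambda}^{n-k}(\overline T)$, and the paper's proof begins by verifying this. You skip it. The check is short---each summand $\lambda_{\rho^*}(\dd\lambda)_\rho$ is trace-free because $\Tr_\sigma\lambda_{\rho^*}=0$ when $\image{\rho^*}\not\subseteq\image{\sigma}$ and $\Tr_\sigma(\dd\lambda)_\rho=0$ when $\image{\rho}\not\subseteq\image{\sigma}$, so every proper face kills the product---but it is part of what is being claimed and should be included.
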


\begin{proof}
  To show that the range of $\mathring{\star}_T$ is in 
  $\mathring{\Lambda}^{n-k} (\overline T)$ it is sufficient to show
  that $\lambda_{\rho^*} (\mathrm{d}\lambda)_{\rho} \in \mathring{\Lambda}^{n-k}
  (\overline T)$ for each $\rho\in \Sigma([0\range{n-k-1}],[0\range{n}])$.  This is so because
  $\mathrm{Tr}_\sigma \lambda_{\rho^*} = 0$ if $[\![\rho^* ]\!] \not \subseteq
  [\![\sigma ]\!]$ and $\mathrm{Tr}_\sigma (\mathrm{d}\lambda)_{\rho} = 0$ if $[\![\rho ]\!]
  \not \subseteq [\![\sigma ]\!]$.  Therefore $\mathrm{Tr}_\sigma \lambda_{\rho^*} (\mathrm{d}
  \lambda)_{\rho} = 0$ for any $[\![\sigma ]\!] \not \supseteq [\![\rho ]\!] \cup
  [\![\rho^* ]\!]$, that is for any $f_\sigma \neq T$.

  To show injectivity, as in lemma~\ref{lem:star_t-bijection}, we can define a
  symmetric bilinear form
  \begin{equation}\label{eq:ringot-inner-product}
  \langle \omega, \mu \rangle_{\mathring{\star}_T} := \int_T \omega \wedge \mathring{\star}_T \mu
  =
  n!|T| \int_T \mathrm{vol}
    \sum_{\rho \in \Sigma([0\range{n-k-1}],[0\range{n}])}
      \lambda_{\rho^*}\ 
      {\star(\omega \wedge (\mathrm{d}\lambda)_{\rho})}\ 
      {\star(\mu \wedge (\mathrm{d}\lambda)_\rho)}.
  \end{equation}
  If $\mathring{\star}_T \omega = 0$, then $\langle \omega, \omega \rangle_{\mathring{\star}_T} = 0$, and
  for every $x$ in $T$
  $$
    \sum_{\rho \in \Sigma([0\range{n-k-1}],[0\range{n}])} \lambda_{\rho^*}(\star(\omega_x \wedge (\mathrm{d}\lambda)_{\rho}))^2 = 0.
  $$
  For every $\rho \in \Sigma([0\range{n-k-1}],[0\range{n}])$ and every $x$ in the interior of $T$ where $\lambda_{\rho^*} > 0$ this implies that
  $\star(\omega_x \wedge (\mathrm{d}\lambda)_{\rho}) = 0$ and so $\omega_x = 0$.  By continuity,
  we conclude $\omega = 0$.
\end{proof}

The operator \(\mathring{\star}_T\) has an important property, analogous
to corollary \ref{cor:twice}, that applying \(\mathring{\star}_T\) twice
is like multiplying by \((-1)^{k(n-k)} \lambda_T\).

\begin{lemma}\label{lem:ringotwice}
$\mathring{\star}_T \circ \mathring{\star}_T = (-1)^{k(n-k)} \lambda_T$.
\end{lemma}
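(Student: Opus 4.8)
The plan is to reduce the statement to a pointwise algebraic identity. Because the barycentric $1$-forms $\dd\lambda_i$ are constant, the operator $\mathring{\star}_T$ is pointwise: $(\mathring{\star}_T\omega)_y$ depends only on $\omega_y$ and on $y$ through the scalars $\lambda_{\rho^*}(y)$. So it suffices to fix $y$ in the interior of $T$ and show that the linear map $(\mathring{\star}_T)_y\circ(\mathring{\star}_T)_y$ on $\Alt^k\RR^n$ is multiplication by $(-1)^{k(n-k)}\lambda_T(y)$. Applying \eqref{eq:ringo} twice and pulling the scalar coefficients out of $\star$ gives
\[
\mathring{\star}_T\mathring{\star}_T\omega
= (n!|T|)^2\!\!\sum_{\sigma\in\SigmaO{k-1}{n}}\ \sum_{\rho\in\SigmaO{n-k-1}{n}}\!\!
\star\!\big(\omega\wedge(\dd\lambda)_\rho\big)\,\lambda_{\rho^*}\,
\star\!\big((\dd\lambda)_\rho\wedge(\dd\lambda)_\sigma\big)\,\lambda_{\sigma^*}\,(\dd\lambda)_\sigma .
\]

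Next I would collapse the double sum. The $n$-form $(\dd\lambda)_\rho\wedge(\dd\lambda)_\sigma$ vanishes unless $\image{\rho}$ and $\image{\sigma}$ are disjoint, and since $|\image{\rho}|+|\image{\sigma}|=n$ a surviving pair satisfies $\image{\rho}\sqcup\image{\sigma}=\rg 0n\setminus\{j\}$ for a unique $j$. For such a pair, the positively oriented simplex identity for barycentric $n$-forms gives $(\dd\lambda)_\rho\wedge(\dd\lambda)_\sigma=\epsilon(\rho,\sigma)\tfrac{(-1)^j}{n!|T|}\vol$, where $\epsilon(\rho,\sigma)\in\{-1,1\}$ is the sign of the permutation that reorders $(\rho(0),\dots,\rho(n-k-1),\sigma(0),\dots,\sigma(k-1))$ into increasing order, so $\star$ of it is $\epsilon(\rho,\sigma)\tfrac{(-1)^j}{n!|T|}$; and since the complement of $\image{\rho}$ is $\{j\}\cup\image{\sigma}$ and the complement of $\image{\sigma}$ is $\{j\}\cup\image{\rho}$, one has $\lambda_{\rho^*}\lambda_{\sigma^*}=\lambda_j^2\prod_{i\ne j}\lambda_i=\lambda_j\lambda_T$. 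Grouping the surviving terms by $j$ yields
\[
\mathring{\star}_T\mathring{\star}_T\omega=n!|T|\,\lambda_T\sum_{j=0}^n\lambda_j\,A_j(\omega),\qquad
A_j(\omega):=(-1)^j\!\!\sum_{\image{\rho}\sqcup\image{\sigma}=\rg 0n\setminus\{j\}}\!\!\epsilon(\rho,\sigma)\,\star\!\big(\omega\wedge(\dd\lambda)_\rho\big)\,(\dd\lambda)_\sigma .
\]

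The heart of the matter is then to show $n!|T|\,A_j(\omega)=(-1)^{k(n-k)}\omega$ for \emph{each} fixed $j$, which finishes the proof since $\sum_j\lambda_j=1$. The key point is that the $n$ one-forms $\{\dd\lambda_i: i\in\rg 0n\setminus\{j\}\}$ form a basis of $\Alt^1\RR^n$ — the only linear relation among $\dd\lambda_0,\dots,\dd\lambda_n$ being $\sum_i\dd\lambda_i=0$ — so the inner product $\langle\cdot,\cdot\rangle_j$ making them orthonormal, together with the volume form $\vol_j:=(\dd\lambda)_{\rg 0n\setminus\{j\}}$ (which equals $\tfrac{(-1)^j}{n!|T|}\vol$), defines a Hodge star $\star_j:\Alt^k\RR^n\biject\Alt^{n-k}\RR^n$. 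Expanding $\star_j\circ\star_j$ in the $\langle\cdot,\cdot\rangle_j$-orthonormal bases $\{(\dd\lambda)_\rho\}$ of $\Alt^{n-k}\RR^n$ and $\{(\dd\lambda)_\sigma\}$ of $\Alt^k\RR^n$ reproduces exactly $n!|T|\,A_j$: the factor $(-1)^j n!|T|$ comes from $\vol=(-1)^j n!|T|\,\vol_j$, and $\epsilon(\rho,\sigma)$ is precisely the coefficient in $(\dd\lambda)_\rho\wedge(\dd\lambda)_\sigma=\epsilon(\rho,\sigma)\vol_j$. Then $\star_j\circ\star_j=(-1)^{k(n-k)}$ gives the claim, and $\mathring{\star}_T\mathring{\star}_T\omega=n!|T|\,\lambda_T\sum_j\lambda_j\cdot\tfrac{(-1)^{k(n-k)}}{n!|T|}\,\omega=(-1)^{k(n-k)}\lambda_T\,\omega$.

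I expect the main obstacle to be precisely this last step — matching $n!|T|\,A_j$ with $\star_j\circ\star_j$, i.e.\ keeping the sign $\epsilon(\rho,\sigma)$ and the normalization between $\vol$ and $\vol_j$ straight. An alternative that avoids introducing $\star_j$ is to evaluate $A_j$ directly on the basis $\{(\dd\lambda)_\tau:\image{\tau}\subseteq\rg 0n\setminus\{j\},\ |\image{\tau}|=k\}$ of $\Alt^k\RR^n$: for $\omega=(\dd\lambda)_\tau$ the only nonvanishing term has $\image{\sigma}=\image{\tau}$ and $\image{\rho}=(\rg 0n\setminus\{j\})\setminus\image{\tau}$, so $A_j$ is diagonal on this basis, and one checks that the two reordering signs that appear (for $\rho$-then-$\tau$ and for $\tau$-then-$\rho$) multiply to $(-1)^{k(n-k)}$. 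The remaining ingredients — commuting scalar functions past $\star$, the degree-and-disjointness bookkeeping, and $\sum_j\lambda_j=1$ — are routine; affine invariance of $\mathring{\star}_T$, already established, is not needed, though one could use it to normalize $T$ to an equilateral simplex at the outset.
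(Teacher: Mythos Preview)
Your argument is correct and parallels the paper's proof step by step: expand the double application, discard overlapping $(\rho,\sigma)$ pairs, regroup by the missing index $j$ (the paper indexes this by $\tau\in\Sigma(\rg 0{n-1},\rg 0n)$ with $\image{\tau^*}=\{j\}$) to pull out the factor $\lambda_T\lambda_j$, and reduce to a per-$j$ Hodge-star-squared identity summed against $\sum_j\lambda_j=1$. The only cosmetic difference is in that per-$j$ step: the paper realizes it via an explicit affine bijection $\phi_\tau:T\to T_{\mathrm{unit}}$ sending the barycentric forms $\{\dd\lambda_i:i\ne j\}$ to the orthonormal coordinate forms $\{\dd x_i\}$, whereas you phrase the same computation intrinsically as $\star_j\circ\star_j=(-1)^{k(n-k)}$ for the Hodge star associated to the inner product making those barycentric forms orthonormal --- your alternative (b), evaluating $A_j$ directly on the basis $\{(\dd\lambda)_\tau:\image\tau\subseteq\rg 0n\setminus\{j\}\}$, is likewise exactly what the paper's pullback accomplishes.
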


\begin{proof}
Let $\omega \in \Lambda^k(\overline T)$ be given and first expand $\mathring{\star}_T \mathring{\star}_T \omega:$
$$
\mathring{\star}_T \mathring{\star}_T \omega
= 
(n!|T|)^2 \sum_{\substack{\rho \in \Sigma([0\range{n-k-1}],[0\range{n}]) \\ \sigma \in \Sigma([0\range{k-1}],[0\range{n}])}}
\lambda_{\rho^*}\lambda_{\sigma^*}\ 
{\star(\omega \wedge (\mathrm{d}\lambda)_\rho)}\ {\star((\mathrm{d}\lambda)_{\rho} \wedge (\mathrm{d}\lambda)_{\sigma})}\ (\mathrm{d}\lambda)_{\sigma}.
$$
If $[\![\rho ]\!] \cap [\![\sigma ]\!] \neq \emptyset$, then $(\mathrm{d}
\lambda)_{\rho} \wedge (\mathrm{d}\lambda)_{\sigma} = 0$. So for each nonzero summand
there is a map $\tau \in \Sigma([0\range{n-1}],[0\range{n}])$ and a map $\hat{\rho}\in
\Sigma([0\range{{n-k-1}}],[0\range{{n-1}}])$ such that $\rho = \tau \circ \hat\rho$ and $\sigma = \tau
\circ \hat\rho^*$. We use this fact to reorganize the sum, noting that the union of the sets $[\![(\tau \circ \rho)^* ]\!]$ and $[\![(\tau \circ \rho^*)^* ]\!]$ is $[0\range{n}]$ and their intersection is $[\![\tau^* ]\!]$:
$$
\begin{aligned}
&\mathring{\star}_T \mathring{\star}_T \omega
\\
&=
(n!|T|)^2 \sum_{\substack{\tau \in \Sigma([0\range{n-1}],[0\range{n}]) \\ \hat\rho \in \Sigma([0\range{{n-k-1}}],[0\range{{n-1}}])}}
\lambda_{(\tau \circ \hat\rho)^*}\lambda_{(\tau \circ \hat\rho^*)^*}\ 
{\star(\omega \wedge (\mathrm{d}\lambda)_{\tau \circ \hat\rho})}\ 
{\star((\mathrm{d}\lambda)_{\tau \circ \hat\rho} \wedge (\mathrm{d}\lambda)_{\tau \circ \hat\rho^*})}\ (\mathrm{d}\lambda)_{\tau\circ\hat\rho^*}.
\\
&=
\lambda_T \sum_{\tau \in \Sigma([0\range{n-1}],[0\range{n}])}
\lambda_{\tau^*}
\left\{
(n!|T|)^2
\sum_{\hat\rho \in \Sigma([0\range{{n-k-1}}],[0\range{{n-1}}])}
{\star(\omega \wedge (\mathrm{d}\lambda)_{\tau \circ \hat\rho})}\ 
{\star((\mathrm{d}\lambda)_{\tau \circ \hat\rho} \wedge (\mathrm{d}\lambda)_{\tau \circ \hat\rho^*})}\ (\mathrm{d}\lambda)_{\tau\circ\hat\rho^*}
\right\}.
\end{aligned}
$$
We show that the term in braces is $(-1)^{k(n-k)} \omega$ for each $\tau$.

Let us again number the canonical basis of $\mathbb{R}^n$ starting from zero,
$e_0, \dots, e_{n-1}$,
and let $T_{\mathrm{unit}}$ be the unit right simplex in the positive orthant, $T_{\mathrm{unit}} := \{x \in \mathbb{R}^n: x_i \geq 0, \sum_{i=0}^{n-1} x_i \leq 1\}$. For each $\tau \in \Sigma([0\range{n-1}],[0\range{n}])$, define the affine bijection $\phi_{\tau}: T \stackrel{\sim\,}{\to}T_{\mathrm{unit}}$ such that the $\tau(i)$th vertex maps to $e_i$ for $i\in[0\range{n-1}]$ and the remaining vertex maps to the
origin.  We note that because the volume of $T_{\mathrm{unit}}$ is $1/n!$, the volume pullback is $(D\phi_\tau)^* \mathrm{vol}= \pm (1 / (n! |T|))\mathrm{vol}$.
We also note that under $\phi_\tau$ the barycentric $k$-forms are pullbacks of coordinate $k$-forms if their indices are subsets of $\tau$: $(\mathrm{d}\lambda)_{\tau \circ \hat\rho} = (D\phi_{\tau})^* (\mathrm{d}x)_{\hat\rho}$ for each $\hat\rho\in \Sigma([0\range{{n-k-1}}],[0\range{{n-1}}])$.
Last, we note that the Hodge star maps a coordinate $k$-forms $(\mathrm{d}x)_{\hat\rho}$ to the coordinate $(n-k)$-forms with complementary indices, $\star(\mathrm{d}x)_{\hat\rho}
=\pm (\mathrm{d}x)_{\hat\rho^*}$.
Thus for each $\tau\in\Sigma([0\range{n-k-1}],[0\range{n}])$, the
term in braces above becomes
$$
\begin{aligned}
&(n!|T|)^2
\sum_{\hat\rho \in \Sigma([0\range{{n-k-1}}],[0\range{{n-1}}])}
{\star(\omega \wedge (\mathrm{d}\lambda)_{\tau \circ \hat\rho})}\ 
{\star((\mathrm{d}\lambda)_{\tau \circ \hat\rho} \wedge (\mathrm{d}\lambda)_{\tau \circ \hat\rho^*})}\ (\mathrm{d}\lambda)_{\tau\circ\hat\rho^*}
\\
={}
&(-1)^{k(n-k)}
(n!|T|)^2
\sum_{\hat\rho \in \Sigma([0\range{{n-k-1}}],[0\range{{n-1}}])}
{\star(\omega \wedge (\mathrm{d}\lambda)_{\tau \circ \hat\rho})}\ 
{\star((\mathrm{d}\lambda)_{\tau \circ \hat\rho^*} \wedge (\mathrm{d}\lambda)_{\tau \circ \hat\rho})}\ (\mathrm{d}\lambda)_{\tau\circ\hat\rho^*}
\\
={}
&(-1)^{k(n-k)}
(D\phi_\tau)^*
\sum_{\hat\rho \in \Sigma([0\range{{n-k-1}}],[0\range{{n-1}}])}
{\star((D\phi_\tau)^{-*}\omega \wedge (\mathrm{d}x)_{\hat\rho})}\ 
{\star((\mathrm{d}x)_{\hat\rho^*} \wedge (\mathrm{d}x)_{\hat\rho})}\ (\mathrm{d}x)_{\hat\rho^*}
\\
={}
&(-1)^{k(n-k)}
(D\phi_\tau)^*
\sum_{\hat\rho \in \Sigma([0\range{{n-k-1}}],[0\range{{n-1}}])}
\langle (D\phi_\tau)^{-*}\omega, (\mathrm{d}x)_{\hat\rho^*}\rangle 
\langle (\mathrm{d}x)_{\hat\rho^*} , (\mathrm{d}x)_{\hat\rho^*}\rangle (\mathrm{d}x)_{\hat\rho^*}
\\
={}
&(-1)^{k(n-k)}
(D\phi_\tau)^* (D\phi_\tau)^{-*} \omega
\\
={}
&(-1)^{k(n-k)} \omega.
\end{aligned}
$$
Summing these contributions over all $\tau$, we get
$$
\mathring{\star}_T \mathring{\star}_T \omega
=
(-1)^{k(n-k)}\lambda_T \omega \sum_{\tau \in \Sigma([0\range{n-1}],[0\range{n}])}
\lambda_{\tau^*}
= (-1)^{k(n-k)}\lambda_T \omega.
$$
\end{proof}

We are now ready to prove the main results of this section, that
\(\mathring{\star}_T\) implements the isomorphisms \eqref{eq:iso1} and
\eqref{eq:iso2}.

\begin{theorem}\label{thm:iso1}
$\mathring{\star}_T:\mathcal{P}_r \Lambda^k (T) \stackrel{\sim\,}{\to}\mathring{\mathcal{P}}_{r + k + 1}^-\Lambda^{n-k}(T).$
\end{theorem}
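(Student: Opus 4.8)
The plan is to prove the isomorphism in three pieces: that $\mathring{\star}_T$ maps $\mathcal{P}_r \Lambda^k(T)$ \emph{into} $\mathring{\mathcal{P}}_{r+k+1}^-\Lambda^{n-k}(T)$, that it is injective there, and that it is onto. Injectivity is already \cref{lem:injection}. For the target: \cref{lem:injection} also already gives that every output is trace-free, and \eqref{eq:ringo} shows at once that $\mathring{\star}_T\omega$ is a polynomial $(n-k)$-form of degree at most $r+k+1$, since each summand is a scalar polynomial $\star(\omega\wedge(\mathrm{d}\lambda)_\rho)$ of degree $\le r$ times the monomial $\lambda_{\rho^*}$ of degree $|\rho^*| = (n+1)-|\rho| = k+1$ times the constant form $(\mathrm{d}\lambda)_\rho$. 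So the only nontrivial part of the range claim is that $\mathring{\star}_T\omega$ actually lies in the \emph{trimmed} subspace rather than merely in $\mathcal{P}_{r+k+1}\Lambda^{n-k}(T)$.

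For that I would compute $\mathring{\star}_T$ on the basis $\{a_\rho(\mathrm{d}\lambda)_{\rho^*}\}$ of $\mathcal{P}_r\Lambda^k(T)$ indexed by $\rho\in\Sigma([0\range{n-k}],[0\range n])$ with $\rho(0)=0$ and $a_\rho$ ranging over a monomial basis of $\mathcal{P}_r(T)$, and show the result is $\pm a_\rho\lambda_{\rho^*}\phi_\rho$ — that is, that $\mathring{\star}_T$ agrees on $\mathcal{P}_r\Lambda^k(T)$, up to a global sign, with the operator $h_T^k$ of \eqref{eq:map1}. In the sum \eqref{eq:ringo} the wedge $(\mathrm{d}\lambda)_{\rho^*}\wedge(\mathrm{d}\lambda)_\sigma$ vanishes unless $\image\sigma\subseteq\image\rho$, i.e.\ unless $\sigma$ enumerates $\image\rho$ with one index $\rho(j)$ removed; for each such $\sigma$ one has $\lambda_{\sigma^*}=\lambda_{\rho^*}\lambda_{\rho(j)}$, and the positively-oriented-simplex identity $(\mathrm{d}\lambda)_{\pi\setminus\pi(i)}=(-1)^i\frac{\sign\pi}{n!|T|}\vol$ reduces $\star((\mathrm{d}\lambda)_{\rho^*}\wedge(\mathrm{d}\lambda)_\sigma)$ to $\pm\frac1{n!|T|}$, so that the sum over $j$ reassembles into $\pm a_\rho\lambda_{\rho^*}\phi_\rho$; a sign check then shows the discrepancy is a single global sign. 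Since $h_T^k$ visibly lands in $\mathring{\mathcal{P}}_{r+k+1}^-\Lambda^{n-k}(T)$ — its terms are polynomials of degree $\le r+k$ times Whitney $(n-k)$-forms, and $\lambda_{\rho^*}\phi_\rho$ is trace-free because its trace on $f_\sigma\ne T$ would force $\image\rho\cup\image{\rho^*}=[0\range n]\subseteq\image\sigma$ — the range inclusion follows. (A self-contained alternative avoiding \eqref{eq:map1} uses the characterization from \cref{sec:notation} that $\mathcal{P}_{r+k+1}^-\Lambda^{n-k}(T)$ is the largest subspace of $\mathcal{P}_{r+k+1}\Lambda^{n-k}(T)$ on which the Koszul operator does not raise polynomial degree: expand $\kappa_0(\mathring{\star}_T\omega)$, extract its ostensibly top, degree-$(r+k+2)$, homogeneous part, and show the resulting sum over $\rho$ vanishes by a barycentric identity rooted in $\sum_i\lambda_i\equiv1$ — the same cancellation that in the case $n=2$, $k=0$ already collapses $\mathring{\star}_T\omega$ to a scalar multiple of the top-dimensional Whitney form.)

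With the range inclusion and injectivity in hand, surjectivity follows from $\dim\mathcal{P}_r\Lambda^k(T)=\dim\mathring{\mathcal{P}}_{r+k+1}^-\Lambda^{n-k}(T)$, which is precisely the dimension identity behind isomorphism \eqref{eq:iso1} in \cite[Theorem 4.16]{arnold2006finite} and is in any case immediate once $\mathring{\star}_T=\pm h_T^k$ is known, since it is shown in \cite{arnold2006finite} that $h_T^k$ is onto. Alternatively, surjectivity can be bootstrapped from \cref{lem:ringotwice}: for $\mu\in\mathring{\mathcal{P}}_{r+k+1}^-\Lambda^{n-k}(T)$, once one knows $\mathring{\star}_T\mu\in\lambda_T\,\mathcal{P}_r\Lambda^k(T)$, say $\mathring{\star}_T\mu=\lambda_T\omega$, then $\mathring{\star}_T(\mathring{\star}_T\omega)=(-1)^{k(n-k)}\lambda_T\omega=(-1)^{k(n-k)}\mathring{\star}_T\mu$, and injectivity of $\mathring{\star}_T$ forces $\mu=(-1)^{k(n-k)}\mathring{\star}_T\omega\in\mathring{\star}_T[\mathcal{P}_r\Lambda^k(T)]$.

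The step I expect to be the main obstacle is the range inclusion — showing the image is trimmed, equivalently that the apparent degree-$(r+k+2)$ Koszul contributions cancel, equivalently that $\mathring{\star}_T$ reproduces the Whitney-form structure of $\mathcal{P}_{r+k+1}^-\Lambda^{n-k}(T)$. Reducing to $h_T^k$ discharges this cleanly but at the cost of importing the basis-level computations of \cite{arnold2006finite}; carrying it out directly is possible, but it is the one place where the bookkeeping with barycentric forms and signs is genuinely delicate.
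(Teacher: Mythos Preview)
Your proposal is correct and follows essentially the same route as the paper: compute $\mathring{\star}_T$ on barycentric $k$-forms $(\mathrm{d}\lambda)_\sigma$ and recognize the answer as $\pm\lambda_\sigma\phi_{\sigma^*}$ (your $\pm a_\rho\lambda_{\rho^*}\phi_\rho$), which lands in the trimmed trace-free space; then invoke \cref{lem:injection} for injectivity and the known dimension equality for surjectivity. The paper carries out exactly this computation (obtaining the explicit sign $(-1)^k\sign\sigma$) and does not use either of your alternative routes via the Koszul characterization or \cref{lem:ringotwice}.
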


\begin{proof}
Applying $\mathring{\star}_T$ to $(\mathrm{d}\lambda)_\sigma$ for some $\sigma \in \Sigma([0\range{k-1}],[0\range{n}])$,
each term in \eqref{eq:ringo} where $[\![\rho ]\!] \cap [\![\sigma ]\!] \neq \emptyset$ vanishes.
The only nonzero summands are for $\rho$ where $[\![\rho ]\!] = [\![\sigma^* ]\!]\setminus\{\sigma^*(j)\}$ for some $j$. Hence
$$
\begin{aligned}
  \mathring{\star}_T (\mathrm{d}\lambda)_{\sigma}
    &=
     n! |T|
      \sum_{j=0}^{n-k-1}
        \star((\mathrm{d}\lambda)_{\sigma} \wedge (\mathrm{d}\lambda)_{\sigma^* \setminus \sigma^*(j)})
        \lambda_{\sigma}\lambda_{\sigma^*(j)}
        (\mathrm{d}\lambda)_{\sigma^*\setminus \sigma^*(j)}
    \\
    &=
     (-1)^k
    (\mathop{\mathrm{sign}}\sigma)
      \lambda_{\sigma}
      \sum_{j=0}^{n-k-1}
        (-1)^j
        \lambda_{\sigma^*(j)}
        (\mathrm{d}\lambda)_{\sigma^*\setminus \sigma^*(j)}
    \\
    &=
      (-1)^k
    (\mathop{\mathrm{sign}}\sigma)
      \lambda_{\sigma}
      \phi_{\sigma^*}.
\end{aligned}
$$
As a result we conclude $\mathring{\star}_T (\mathrm{d}\lambda)_{\sigma} \in \mathring{\mathcal{P}}_{k+1}^- \Lambda^{n-k}(T)$. Each polynomial in $\mathcal{P}_{r}\Lambda^k(T)$ has a representation of the form
$$
\sum_{\sigma\in \Sigma([0\range{k-1}],[0\range{n}])} a_{\sigma}(\mathrm{d}\lambda)_{\sigma},
\quad a_{\sigma} \in \mathcal{P}_r(T),
$$
so $\mathring{\star}_T$ maps $\mathcal{P}_{r} \Lambda^k(T)$ into
$\mathring{\mathcal{P}}_{r+k+1}^- \Lambda^{n-k}(T)$.
By lemma~\ref{lem:injection}, $\mathring{\star}_T$ is injective,
and it has already been established that the spaces have the same dimension,
so the operator is an isomorphism.
\end{proof}

\begin{theorem}\label{thm:iso2}
$\mathring{\star}_T:\mathcal{P}_r^- \Lambda^k (T) \stackrel{\sim\,}{\to}\mathring{\mathcal{P}}_{r + k}\Lambda^{n-k}(T).$
\end{theorem}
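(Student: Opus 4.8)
The plan is to follow the template of the proof of \cref{thm:iso1}: compute $\mathring{\star}_T$ on the natural spanning set of $\mathcal{P}_r^-\Lambda^k(T)$, verify that the image lies in $\mathring{\mathcal{P}}_{r+k}\Lambda^{n-k}(T)$, and then upgrade this inclusion to an isomorphism using that $\mathring{\star}_T$ is injective (\cref{lem:injection}) together with the equality $\dim\mathcal{P}_r^-\Lambda^k(T) = \dim\mathring{\mathcal{P}}_{r+k}\Lambda^{n-k}(T)$, the latter being the content of isomorphism \eqref{eq:iso2} as established in \autocite{arnold2006finite}.

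The first ingredient is the observation, immediate from \eqref{eq:ringo}, that $\mathring{\star}_T$ commutes with multiplication by scalar functions, i.e.\ $\mathring{\star}_T(a\omega) = a\,\mathring{\star}_T\omega$ for $a \in C^\infty(\overline T)$, since each map $\omega \mapsto \star(\omega\wedge(\mathrm{d}\lambda)_\rho)$ is pointwise linear and the factors $\lambda_{\rho^*}$ and $(\mathrm{d}\lambda)_\rho$ do not involve $\omega$. Because $\mathcal{P}_r^-\Lambda^k(T) = \operatorname{span}\{a_\rho \phi_\rho : a_\rho \in \mathcal{P}_{r-1}(T),\ \rho \in \SigmaO{k}{n}\}$, it is then enough to identify $\mathring{\star}_T\phi_\rho$ for each $\rho \in \SigmaO{k}{n}$.

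The crux is the claim that $\mathring{\star}_T\phi_\rho = \sign(\rho)\,\lambda_\rho\,(\mathrm{d}\lambda)_{\rho^*}$. The naive route would be a brute expansion: substitute $\phi_\rho = \sum_{i=0}^k (-1)^i \lambda_{\rho(i)}(\mathrm{d}\lambda)_{\rho\setminus\rho(i)}$, apply the formula $\mathring{\star}_T(\mathrm{d}\lambda)_\sigma = (-1)^k (\sign\sigma)\,\lambda_\sigma\,\phi_{\sigma^*}$ from the proof of \cref{thm:iso1} to each $\sigma = \rho\setminus\rho(i) \in \SigmaO{k-1}{n}$, and use $\lambda_{\rho(i)}\lambda_{\rho\setminus\rho(i)} = \lambda_\rho$ to factor out $\lambda_\rho$; what remains is an alternating sum of Whitney $(n-k)$-forms that must be collapsed to $\pm\sign(\rho)(\mathrm{d}\lambda)_{\rho^*}$ via the relation $\sum_j \mathrm{d}\lambda_j = 0$. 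I expect that combinatorial collapse to be the main obstacle, so I would sidestep it with \cref{lem:ringotwice}: set $\psi := \mathring{\star}_T\phi_\rho - \sign(\rho)\,\lambda_\rho\,(\mathrm{d}\lambda)_{\rho^*} \in \Lambda^{n-k}(\overline T)$ and apply $\mathring{\star}_T$ again. By \cref{lem:ringotwice}, $\mathring{\star}_T\mathring{\star}_T\phi_\rho = (-1)^{k(n-k)}\lambda_T\phi_\rho$. The same derivation that yields the displayed formula in the proof of \cref{thm:iso1} gives $\mathring{\star}_T(\mathrm{d}\lambda)_\sigma = (-1)^j(\sign\sigma)\,\lambda_\sigma\,\phi_{\sigma^*}$ for any $\sigma \in \SigmaO{j-1}{n}$; in particular, since $\rho^* \in \SigmaO{n-k-1}{n}$ and $(\rho^*)^* = \rho$, we get $\mathring{\star}_T(\mathrm{d}\lambda)_{\rho^*} = (-1)^{n-k}(\sign\rho^*)\,\lambda_{\rho^*}\,\phi_\rho$, hence by the scalar-multiplication property $\mathring{\star}_T(\sign(\rho)\lambda_\rho(\mathrm{d}\lambda)_{\rho^*}) = (-1)^{n-k}\sign(\rho)\sign(\rho^*)\,\lambda_T\,\phi_\rho$. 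Using $\lambda_\rho\lambda_{\rho^*} = \lambda_T$, the identity $\sign(\rho)\sign(\rho^*) = (-1)^{(k+1)(n-k)}$ (passing between $\rho$ and $\rho^*$ moves a block of $k+1$ indices past a block of $n-k$ indices), and the parity fact $(n-k) + (k+1)(n-k) \equiv k(n-k) \pmod 2$, the two contributions to $\mathring{\star}_T\psi$ cancel, so $\mathring{\star}_T\psi = 0$; \cref{lem:injection}, whose argument applies verbatim to $(n-k)$-forms, then forces $\psi = 0$.

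Granting the claim, $\lambda_\rho(\mathrm{d}\lambda)_{\rho^*} = \lambda_{(\rho^*)^*}(\mathrm{d}\lambda)_{\rho^*}$ with $\rho^* \in \SigmaO{n-k-1}{n}$ is precisely a summand of the type shown to be trace-free in the proof of \cref{lem:injection}, and it has polynomial degree $k+1$ because $\lambda_\rho$ is a product of $k+1$ barycentric coordinates while $(\mathrm{d}\lambda)_{\rho^*}$ is a constant form. Hence for $a_\rho \in \mathcal{P}_{r-1}(T)$ we have $\mathring{\star}_T(a_\rho\phi_\rho) = \sign(\rho)\,a_\rho\lambda_\rho(\mathrm{d}\lambda)_{\rho^*} \in \mathring{\mathcal{P}}_{r+k}\Lambda^{n-k}(T)$, so $\mathring{\star}_T[\mathcal{P}_r^-\Lambda^k(T)] \subseteq \mathring{\mathcal{P}}_{r+k}\Lambda^{n-k}(T)$. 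Since $\mathring{\star}_T$ is injective (\cref{lem:injection}) and the two spaces have equal dimension by \eqref{eq:iso2}, this inclusion is an equality, and $\mathring{\star}_T$ restricts to the asserted isomorphism.
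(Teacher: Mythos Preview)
Your proposal is correct and is essentially the paper's own argument. Both proofs combine the identity $\mathring{\star}_T(\mathrm{d}\lambda)_{\rho^*} = (-1)^{n-k}(\sign\rho^*)\,\lambda_{\rho^*}\,\phi_{\rho}$ from the proof of \cref{thm:iso1} with \cref{lem:ringotwice} and the sign relation $\sign(\rho)\sign(\rho^*)=(-1)^{(k+1)(n-k)}$ to obtain $\mathring{\star}_T\phi_\rho=(\sign\rho)\,\lambda_\rho(\mathrm{d}\lambda)_{\rho^*}$; the paper simply inverts the \cref{thm:iso1} formula first (dividing by $\lambda_{\rho^*}$) and then applies $\mathring{\star}_T$ once, whereas you phrase the same computation as showing $\mathring{\star}_T\psi=0$ for the difference $\psi$ and invoking injectivity.
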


\begin{proof}
Let $\sigma\in \Sigma([0\range{k}],[0\range{n}])$ be given.
From the proof of theorem~\ref{thm:iso1}, we have
$$
\phi_{\sigma} = (-1)^{n-k}\frac{\mathop{\mathrm{sign}}\sigma^*}{\lambda_{\sigma^*}} \mathring{\star}_T (\mathrm{d}\lambda)_{\sigma^*}.
$$
Applying $\mathring{\star}_T$ to both sides, we have
$$
\begin{aligned}
\mathring{\star}_T \phi_{\sigma} &= (-1)^{n-k}\frac{\mathop{\mathrm{sign}}\sigma^*}{\lambda_{\sigma^*}} \mathring{\star}_T \mathring{\star}_T (\mathrm{d}\lambda)_{\sigma^*} \\
&= (-1)^{(k+1)(n-k)}\frac{(\mathop{\mathrm{sign}}\sigma^*)\lambda_T}{\lambda_{\sigma^*}} (\mathrm{d}\lambda)_{\sigma^*} \\
&= (\mathop{\mathrm{sign}}\sigma)\lambda_{\sigma} (\mathrm{d}\lambda)_{\sigma^*},
\end{aligned}
$$
which is in $\mathcal{P}_{k+1}\Lambda^{n-k}(T)$.
Therefore, as every function in $\mathcal{P}_r^- \Lambda^k(T)$ has a representation
of the form
$$
\sum_{\sigma\in \Sigma([0\range{k}],[0\range{n}])} a_{\sigma} \phi_\sigma,
$$
where $a_{\sigma} \in \mathcal{P}_{r-1} (T)$,
$\mathring{\star}_T$ maps $\mathcal{P}_r^- \Lambda^k(T)$ into
$\mathring{\mathcal{P}}_{r+k}\Lambda^{n-k}(T)$ injectively, which concludes the proof.
\end{proof}

We note that the proofs of \cref{thm:iso1,thm:iso2} show that
\(\mathring{\star}_T\) differs from \(h_T^k\) and \(h_T^{k,-}\) only by
sign conventions.

To complete the claim that \(\mathring{\star}_T\) induces an isomorphism
between \(\Lambda^k(\overline T)\) and
\(\mathring{\Lambda}^{n-k}(\overline T)\), we need to prove that
\(\mathring{\star}_T:\Lambda^k(\overline T) \to \mathring{\Lambda}^{n-k}(\overline T)\)
is a surjection, for which we require one additional lemma.

Let us define the space of \(k\)-forms that are not only trace-free but
vanish at the boundary, \[
\mathring{C}^\infty(\overline T; \mathrm{Alt}^{n-k}\mathbb{R}^n) := \{\omega \in \Lambda^{n-k}(\overline T):
\omega |_{\partial T} = 0\}.
\] It turns out that \(\mathring{\star}_T\) maps
\(\mathring{\Lambda}^k(\overline T)\) into this space.

\begin{lemma}\label{lem:czero}
$\mathring{\star}_T : \mathring{\Lambda}^k(\overline T) \to \mathring{C}^\infty(\overline T; \mathrm{Alt}^{n-k}\mathbb{R}^n)$.
\end{lemma}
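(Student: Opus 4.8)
The plan is to prove the pointwise statement that $(\mathring{\star}_T\omega)_x = 0$ for every $x\in\partial T$ whenever $\omega\in\mathring{\Lambda}^k(\overline T)$. This is strictly stronger than the trace-freeness already recorded implicitly in \cref{lem:injection}, and together with the evident smoothness of the right-hand side of \eqref{eq:ringo} it is exactly what membership in $\mathring{C}^\infty(\overline T;\mathrm{Alt}^{n-k}\mathbb{R}^n)$ requires. So I would fix $x\in\partial T$; then at least one barycentric coordinate vanishes at $x$, say $\lambda_j(x)=0$, which puts $x$ in the closed facet $\overline{f_\sigma}$, where $\sigma\in\Sigma([0\range{n-1}],[0\range{n}])$ is the increasing map omitting $j$.

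The one substantive ingredient is to extract structure of the algebraic form $\omega_x$ from trace-freeness. Since $f_\sigma\neq T$, the definition of $\mathring{\Lambda}^k(\overline T)$ gives $\mathrm{Tr}_\sigma\omega=0$, hence $i_\sigma^*\omega_x=0$, i.e.\ $\omega_x$ annihilates the tangent space $\ker(\mathrm{d}\lambda_j)$ of $f_\sigma$. I would then invoke the elementary exterior-algebra fact that the kernel of the restriction map $\mathrm{Alt}^k\mathbb{R}^n\to\mathrm{Alt}^k\ker(\mathrm{d}\lambda_j)$ is exactly $\mathrm{d}\lambda_j\wedge\mathrm{Alt}^{k-1}\mathbb{R}^n$: one containment is immediate from $(\mathrm{d}\lambda_j\wedge\beta)(v_1,\dots,v_k)=0$ when all $v_i\in\ker(\mathrm{d}\lambda_j)$, and the reverse follows from surjectivity of the restriction together with the dimension identity $\binom{n-1}{k-1}+\binom{n-1}{k}=\binom{n}{k}$. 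Consequently $\omega_x=\mathrm{d}\lambda_j\wedge\eta$ for some $\eta\in\mathrm{Alt}^{k-1}\mathbb{R}^n$.

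It then remains to inspect the summands of \eqref{eq:ringo} at $x$, indexed by $\rho\in\Sigma([0\range{n-k-1}],[0\range{n}])$. Because $\image{\rho}$ and $\image{\rho^*}$ partition $[0\range{n}]$, the index $j$ lies in exactly one of them. If $j\in\image{\rho^*}$, then $\lambda_{\rho^*}$ carries $\lambda_j$ as a factor, so $\lambda_{\rho^*}(x)=0$ and that summand vanishes. If $j\in\image{\rho}$, then $(\mathrm{d}\lambda)_\rho$ factors as $\pm\,\mathrm{d}\lambda_j\wedge\omega'$ for a wedge $\omega'$ of the remaining barycentric $1$-forms, so $\omega_x\wedge(\mathrm{d}\lambda)_\rho=\mathrm{d}\lambda_j\wedge\eta\wedge(\mathrm{d}\lambda)_\rho$ contains the factor $\mathrm{d}\lambda_j\wedge\mathrm{d}\lambda_j=0$; hence $\star(\omega_x\wedge(\mathrm{d}\lambda)_\rho)=0$ and that summand vanishes as well. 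Every summand is zero, so $(\mathring{\star}_T\omega)_x=0$, which is the claim.

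I do not expect a real obstacle beyond careful bookkeeping; the only places that need a little care are the hyperplane-restriction lemma quoted above and the verification that the dichotomy $j\in\image{\rho}$ versus $j\in\image{\rho^*}$ genuinely forces every term of \eqref{eq:ringo} to vanish at a boundary point — the rest is just evaluating \eqref{eq:ringo} pointwise.
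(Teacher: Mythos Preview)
Your argument is correct and follows essentially the same strategy as the paper: both proofs fix a boundary point, use trace-freeness to constrain the algebraic form $\omega_x$ to lie in the nullspace of the restriction to the tangent space, and then show that $\mathring{\star}_T$ applied to any such form vanishes at $x$ because of a barycentric bubble factor. The differences are only in packaging. You specialize to a facet $f_\sigma$ (codimension one) and write $\omega_x=\mathrm{d}\lambda_j\wedge\eta$, then kill each summand of \eqref{eq:ringo} by the dichotomy $j\in\image{\rho}$ versus $j\in\image{\rho^*}$; the paper instead allows an arbitrary boundary face, expands $\omega_x$ in the barycentric $k$-forms $(\mathrm{d}\lambda)_\rho$ with $\image{\rho}\not\subseteq\image{\sigma}$, and invokes the closed formula $\mathring{\star}_T(\mathrm{d}\lambda)_\rho=(-1)^k(\sign\rho)\,\lambda_\rho\,\phi_{\rho^*}$ established in the proof of \cref{thm:iso1}, whose factor $\lambda_\rho$ vanishes at $x$. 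Your route is marginally more self-contained since it does not cite \cref{thm:iso1}; the paper's route is marginally shorter since the needed computation was already done.
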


\begin{proof}
  Let $x \in f_\sigma \neq T$ be given.  If $\omega \in
  \mathring{\Lambda}^k(\overline T)$, then $(\mathrm{Tr}_\sigma \omega)_x = (i_\sigma^* \omega)_x = (Di_\sigma)^* \omega_x = 0$. We use the
  fact that the nullspace of $(Di_\sigma)^*$ is spanned by
  barycentric $k$-forms whose indices are not contained in $\sigma$,
  $\{(\mathrm{d}
  \lambda)_{\rho} \in \Sigma([0\range{k-1}],[0\range{n}]): [\![\rho ]\!] \not\subseteq [\![\sigma ]\!] \}$, to say that $\omega_x$ is a linear combination of these
  $k$-forms.  For such $(\mathrm{d}\lambda)_{\rho}$, the proof of
  theorem~\ref{thm:iso1} shows that
  $$
  (\mathring{\star}_T (\mathrm{d}\lambda)_{\rho})_x = (-1)^k(\mathop{\mathrm{sign}}\rho) (\lambda_{\rho} (\mathrm{d}\lambda)_{\rho^*})_x = 0,
  $$
  a conclusion we reach because $(\lambda_{\rho})_x = 0$ for $x\in
  f_\sigma$ and $[\![\rho ]\!]\not\subseteq [\![\sigma ]\!]$.
  Therefore $(\mathring{\star}_T \omega)_x = 0$ as well.
\end{proof}

We are now able to present our final theorem of this section.

\begin{theorem}\label{thm:ringoall}
$\mathring{\star}_T: \Lambda^k(\overline T) \stackrel{\sim\,}{\to}
\mathring{\Lambda}^k(\overline T).$
\end{theorem}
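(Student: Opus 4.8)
The plan is to combine the injectivity already established in \cref{lem:injection} with a surjectivity argument that uses $\mathring{\star}_T$ as a near-inverse of itself. By \cref{lem:injection}, $\mathring{\star}_T:\Lambda^k(\overline T)\to\mathring{\Lambda}^{n-k}(\overline T)$ is injective, so it remains only to prove that it is onto $\mathring{\Lambda}^{n-k}(\overline T)$. Three ingredients drive the proof: \cref{lem:czero}, which sends $\mathring{\Lambda}^{n-k}(\overline T)$ into the forms vanishing on $\partial T$; \cref{lem:ringotwice}, which identifies $\mathring{\star}_T\circ\mathring{\star}_T$ with multiplication by $(-1)^{k(n-k)}\lambda_T$; and the elementary observation that, since \eqref{eq:ringo} defines $\mathring{\star}_T$ linearly on fibers, it commutes with multiplication by any scalar function, i.e. $\mathring{\star}_T(f\omega)=f\,\mathring{\star}_T\omega$ for $f\in C^\infty(\overline T)$.

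Given $\mu\in\mathring{\Lambda}^{n-k}(\overline T)$, \cref{lem:czero} gives $\mathring{\star}_T\mu\in\mathring{C}^\infty(\overline T;\mathrm{Alt}^k\mathbb{R}^n)$, that is, $\mathring{\star}_T\mu$ vanishes on $\partial T$. I would then invoke the divisibility fact that every element of $\mathring{C}^\infty(\overline T;\mathrm{Alt}^k\mathbb{R}^n)$ equals $\lambda_T$ times a form in $\Lambda^k(\overline T)$, writing $\mathring{\star}_T\mu=\lambda_T\eta$ with $\eta\in\Lambda^k(\overline T)$. Applying $\mathring{\star}_T$ once more and using the commutation property together with \cref{lem:ringotwice} applied to the $(n-k)$-form $\mu$ (the sign being $(-1)^{(n-k)(n-(n-k))}=(-1)^{k(n-k)}$),
\[
\lambda_T\,\mathring{\star}_T\eta = \mathring{\star}_T(\lambda_T\eta) = \mathring{\star}_T\mathring{\star}_T\mu = (-1)^{k(n-k)}\lambda_T\mu.
\]
Since $\lambda_T>0$ on the interior of $T$ and both sides are continuous on $\overline T$, this forces $\mathring{\star}_T\eta=(-1)^{k(n-k)}\mu$, so $\omega:=(-1)^{k(n-k)}\eta\in\Lambda^k(\overline T)$ satisfies $\mathring{\star}_T\omega=\mu$. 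Combined with \cref{lem:injection}, $\mathring{\star}_T$ is a bijection.

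The main obstacle is the divisibility fact: that a smooth $\mathrm{Alt}^k\mathbb{R}^n$-valued function on $\overline T$ vanishing on $\partial T$ factors as $\lambda_T$ times a smooth function. I would prove it locally. Near a point in the relative interior of a $d$-face $f_\rho$, the barycentric coordinates $\{\lambda_i : i\notin[\![\rho]\!]\}$ extend to a smooth coordinate chart; the form vanishes on each hyperplane $\{\lambda_i=0\}$ for $i\notin[\![\rho]\!]$, so by iterated Hadamard's lemma it is $\bigl(\prod_{i\notin[\![\rho]\!]}\lambda_i\bigr)$ times a smooth form there, and dividing further by $\prod_{i\in[\![\rho]\!]}\lambda_i$ (which is bounded away from zero near that point) produces a smooth local factor $h$. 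Patching these with a partition of unity subordinate to a finite open cover of $\overline T$ by such neighborhoods (the candidate for $h$ being forced on the interior, so the patches agree) yields a global $h\in\Lambda^k(\overline T)$ with the form equal to $\lambda_T h$. This is precisely the pointwise divisibility already implicit in the introduction's discussion of $h_T$, and it is consistent with \cref{thm:iso1,thm:iso2}, which dispatch the polynomial subspaces degree by degree.
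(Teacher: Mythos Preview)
Your argument is correct and follows essentially the same route as the paper: both use \cref{lem:injection} for injectivity, then for surjectivity apply \cref{lem:czero} to land $\mathring{\star}_T\mu$ in $\mathring{C}^\infty(\overline T;\mathrm{Alt}^k\mathbb{R}^n)$, invert multiplication by $\lambda_T$, and appeal to \cref{lem:ringotwice} plus injectivity to recover $\mu$. The only difference is that you explicitly justify the divisibility step (that every smooth form vanishing on $\partial T$ is $\lambda_T$ times a smooth form) via Hadamard's lemma, whereas the paper simply asserts that $\mathring{\star}_T\circ\mathring{\star}_T=\pm\lambda_T$ is a bijection onto $\mathring{C}^\infty(\overline T;\mathrm{Alt}^k\mathbb{R}^n)$ without further comment.
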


\begin{proof}
Lemma~\ref{lem:injection} already shows that the map is an injection: it remains to show that it is a surjection.

Let $\omega\in\Lambda^k(\overline T)$ be given. By lemma~\ref{lem:ringotwice}, $\mathring{\star}_T \mathring{\star}_T \omega = (-1)^{k(n-k)}\lambda_T \omega$, so $\mathring{\star}_T \circ \mathring{\star}_T$
is a bijection,
$\mathring{\star}_T \circ \mathring{\star}_T: \Lambda^k(\overline T) \stackrel{\sim\,}{\to}\mathring{C}^\infty(\overline T; \mathrm{Alt}^k\mathbb{R}^n)$.
Given $\mu \in \mathring{\Lambda}^{n-k}(\overline T)$, by lemma~\ref{lem:czero} we can define
an element $\omega := (\mathring{\star}_T \circ \mathring{\star}_T)^{-1} \mathring{\star}_T \mu\in \Lambda^k(\overline T)$.
$\mathring{\star}_T \omega$ is in $\mathring{\Lambda}^{n-k}(\overline T)$, and
$\mathring{\star}_T \mathring{\star}_T \omega = \mathring{\star}_T \mu$.  By the injectivity of $\mathring{\star}_T$, $\mathring{\star}_T \omega = \mu$.
\end{proof}

\hypertarget{optimal-dual-basis-functionals-via-the-mathringstar_t-inner-product}{%
\subsection{\texorpdfstring{Optimal dual basis functionals via the
\(\mathring{\star}_T\) inner
product}{Optimal dual basis functionals via the \textbackslash{}mathring\{\textbackslash{}star\}\_T inner product}}\label{optimal-dual-basis-functionals-via-the-mathringstar_t-inner-product}}

Because the operator \(\mathring{\star}_T\) is defined pointwise, it can
be used in practice to adapt an unrestricted basis with desirable
qualities -- some combination of numerical stability and computational
efficiency -- into a basis for trace-free subspaces.

Implementations of the finite element method may also define the basis
functions indirectly, opting instead to define a unisolvent set of
functionals and constructing the basis functions by inverting the
generalized vandermonde matrix of a numerically stable basis for the
primal space. The canonical basis dual basis functionals for the trace
free subspace \(\mathring{\mathcal{P}}_r \Lambda^k(T)\) given in
\cite[Sections 4.5]{arnold2006finite} are \[
\phi_i(\omega) = \int_T \omega \wedge \eta_i,
\] where \(\{\eta_i\}\) is a basis of
\(\mathcal{P}_{r - (n - k)}^- \Lambda^k(T)\); for
\(\mathring{\mathcal{P}}_r^- \Lambda^k(T)\), they are the basis
functionals given in \cite[Sections 4.6]{arnold2006finite}, \[
\phi_i(\omega) = \int_T \omega \wedge \eta_i,
\] where \(\{\eta_i\}\) is a basis of
\(\mathcal{P}_{r - (n - k) - 1} \Lambda^k(T)\).

That work does not suggest which dual basis differential forms
\(\{\eta_i\}\) to use in either case. The inner product
\(\langle \cdot, \cdot \rangle_{\mathring{\star}_T}\) defined in
\eqref{eq:ringot-inner-product} in the proof of
lemma\textasciitilde{}\ref{lem:injection} is useful for this purpose. If
a basis \(\{\eta_i\}\) is chosen that is orthonormal with respect to
this inner product, then the corresponding basis functions can be
computed directly as \(\{\mathring{\star}_T \eta_i\}\). For scalar
polynomials, such orthonormal polynomials can be evaluated explicitly
using Dubiner-type combinations of Gauss-Jacobi polynomials, such as in
the basis of \textcite{sherwin1995new}. We do not investigate the
construction of an explicit orthonormal basis for
\(\langle \cdot, \cdot \rangle_{\mathring{\star}_T}\) further in this
work.

\hypertarget{extension-operators}{%
\section{Extension operators}\label{extension-operators}}

\label{sec:extension}

\hypertarget{consistent-families-of-extension-operators}{%
\subsection{Consistent families of extension
operators}\label{consistent-families-of-extension-operators}}

In \cite{arnold2009geometric}, the authors define a \emph{consistent
family of extension operators} for geometric decomposition. Letting
\(X\) stand in for \(\mathcal{P}_r \Lambda^k\) or
\(\mathcal{P}_r^- \Lambda^k\), a consistent family of extension
operators is a set of operators
\(\{E_{f,g}: X(f) \to X(g), f,g\in\mathcal{T}_h \}\) for any pair of
simplices \(f\) and \(g\) in a mesh \(\mathcal{T}_h\), for which trace
and extension operators commute in the following sense: if \(f\) and
\(g\) are boundary simplices of simplex \(h\), and \(f\cap g\) is the
intersection of their boundaries, then
\begin{equation}\label{eq:consistent}
\mathrm{Tr}_{h,g} E_{f,h} \omega = E_{f \cap g,g}\mathrm{Tr}_{f,f\cap g} \omega
\quad \omega \in X(f).
\end{equation} This property guarantees that the extension of a
trace-free \(k\)-form \(\omega\in X(f_\sigma)\) into neighboring cells
has trace continuity at the boundaries between the cells, and so it can
serve as a local basis function in an \(H\Lambda^k(\Omega\))-conforming
finite element space.

There are many ways to define consistent families of extension
operators, but in this work we only define extension operators that are
affine invariant and respect all of the symmetries of the simplex. This
allows us to give our definitions with respect to the single-element
mesh made up of the simplex \(T\) and its boundary simplices, and the
results can be mapped to a general mesh \(\mathcal{T}_h\) with
properties preserved. We also note that, for the purpose of geometric
decomposition, it is not necessary to design an extension operator whose
domain is all of \(X(f)\), because the only \(k\)-forms that are
extended are in \(\mathring{X}(f)\).

Using these facts, in this section we design a family of extension
operators
\(\{\dot{E}_{\rho,\sigma}: \mathring{X}(f_\rho) \to X(f_\sigma), [\![\rho ]\!], [\![\sigma ]\!] \subseteq [0\range{n}]\}\)
such that for any \(f_\tau\) such that
\([\![\rho ]\!],[\![\sigma ]\!]\subset[\![\tau ]\!]\), \[
\mathrm{Tr}_{\tau,\sigma} \dot{E}_{\rho,\tau} \omega = \dot{E}_{\rho\cap\sigma,\sigma} \mathrm{Tr}_{\rho,\rho\cap\sigma} \omega, \quad \omega \in X(f_\rho).
\] This is sufficient to show that one has the geometric decomposition
\[
X(T) = \bigoplus_{\sigma} \dot{E}_{\sigma,T}[\mathring{X}(f_\sigma)],
\] where we can take not only \(X(f) = \mathcal{P}_r\Lambda^k(f)\) or
\(X(f) = \mathcal{P}_r^-\Lambda^k(f)\), but even
\(X = \Lambda^k(\overline f)\).

\hypertarget{additional-notation}{%
\subsection{Additional notation}\label{additional-notation}}

We identify \(k\)-forms whose domain if \(f_\sigma\) with a superscript,
such as \(\lambda^{(\sigma)}\), \(\mathrm{d}\lambda^{(\sigma)}\),
\(\phi_\rho^{(\tau)}\).

\paragraph{Centroid projectors.}

For each \(d\)-simplex \(f_{\sigma}\) in the boundary of \(T\), the
\emph{centroid projector} \(P_{T,\sigma}: T \to f_\sigma\) is an affine
map defined in terms of its actions on the vertices \(\{v_i\}_{i=0}^n\)
of \(T\): if \(i \in \sigma\), then \(P_{T,\sigma} (v_i) = v_i\),
otherwise \(v_i\) is mapped to the centroid of \(f_{\sigma}\),
\(P_{T,\sigma} (v_i) = \frac{1}{d}\sum_{i = 0}^d v_{\sigma(i)}\). Given
two boundary simplices \(f_{\sigma}\) and \(f_{\rho}\), with
\([\![\rho ]\!] \subset [\![\sigma ]\!]\), then we define the centroid
projector \(P_{\sigma,\rho}: f_{\sigma} \to f_{\rho}\) analogously.
Centroid projectors compose with each other, in that if
\([\![\rho ]\!] \subseteq [\![\sigma ]\!] \subseteq [\![\tau ]\!]\),
then \begin{equation}\label{eq:p-compose}
P_{\sigma,\rho} \circ P_{\tau, \sigma} = P_{\tau,\rho}.
\end{equation} The centroid project \(P_{\sigma,\rho}\) is also a left
inverse of the inclusion map
\(i_{\rho,\sigma}:f_{\rho} \hookrightarrow f_{\sigma}\), which implies
that \(\mathrm{Tr}_{\sigma,\rho}\) is a left inverse for
\(P_{\sigma,\rho}^*\). Combining this fact with \eqref{eq:p-compose}, we
get \begin{equation}\label{eq:tr-inv-p}
P_{\sigma,\rho}^* = \mathrm{Tr}_{\tau,\sigma} \circ P_{\tau,\rho}^*.
\end{equation}

\paragraph{Centroid Koszul operators.}

For each simplex \(f_\sigma\), we let \(\kappa^{(\sigma)}\) be the
Koszul operator centered at the centroid of \(f_\sigma\). Because the
centroid projector \(P_{\sigma,\rho}\) maps the centroid of \(f_\sigma\)
to the centroid of \(f_\rho\), \eqref{eq:kozsul-commute} implies \[
\kappa^{(\sigma)} P_{\sigma,\rho}^* = P_{\sigma,\rho}^* \kappa^{(\rho)}.
\]

\hypertarget{previous-extension-operators}{%
\subsection{Previous extension
operators}\label{previous-extension-operators}}

In \cite{arnold2009geometric}, the authors define two extension
operators,
\(E_{\sigma,T}^{r,k}: \mathcal{P}_r \Lambda^k(f_{\sigma}) \to \mathcal{P}_r \Lambda^k(T)\)
and
\(E_{\sigma,T}^{r,k,-}: \mathcal{P}_r^- \Lambda^k(f_{\sigma}) \to \mathcal{P}_r^- \Lambda^k(T)\),
and show that they are consistent extension operators that can be used
in geometric decompositions of their target spaces. Those extension
operators are defined as follows:

\begin{align}
E_{\sigma,T}^{r,k}: (\lambda_{\sigma}^{(\sigma)})^{\alpha} (\mathrm{d}\lambda^\sigma)_\tau &\mapsto
(\lambda_\sigma)^{\alpha} P_{T,\sigma,\alpha}^*(\mathrm{d}\lambda^{\sigma})_{\tau}, &|\alpha| = r, \tau \in \Sigma([0\range{k}], {[\![\sigma ]\!]}); \label{eq:extfull} \\
E_{\sigma,T}^{r,k,-}: (\lambda_{\sigma}^{(\sigma)})^{\alpha} \phi_{\tau}^{(\sigma)} &\mapsto (\lambda_{\sigma})^{\alpha}
\phi_{\tau}, &|\alpha| = r - 1, \tau \in
\Sigma([0\range{k-1}], {[\![\sigma ]\!]}). \label{eq:exttrimmed}
\end{align}

The projector \(P_{T,\sigma,\alpha}:T \to f_{\sigma}\) used in
\eqref{eq:extfull} differs from the centroid projector in that the
vertices that are not in \(f_\sigma\) map not to the centroid of
\(f_\sigma\), but to \(x_\alpha\), a weighted average of the vertices of
\(f_\sigma\) with weights determined by the multi-index \(\alpha\), \[
x_\alpha = \frac{1}{|\alpha|} \sum_{i=0}^{d} \alpha(i) v_{\sigma(i)}.
\]

As was the case with the trace-free maps in \cref{sec:prev-trace-free},
these extension operators are defined in terms of their action on the
products of scalar polynomials and barycentric \(k\)-forms (for full
polynomials) or Whitney forms (for trimmed). Unlike that case, though,
these operators are mutually incompatible, in that neither maps the
other into the correct space.

We demonstrate their incompatibility when extending 1-forms from the
triangle \(\sigma = (v_1,v_2,v_3)\) to the tetrahedron
\(T = (v_0,v_1,v_2,v_3)\) by finding specific examples where a 1-form
extends to the wrong space.

\textbf{Example showing
\(E_{\sigma,T}^{k,r,-} \mathring{\mathcal{P}}_{r-1}\Lambda^k(f_\sigma) \not\to \mathcal{P}_{r-1}\Lambda^k(T)\).}
In this case \(k=1\) and \(r=3\). Let
\(\omega = \lambda_1^{(\sigma)} \lambda_2^{(\sigma)} \mathrm{d}\lambda_3^{(\sigma)} \in \mathring{\mathcal{P}}_2 \Lambda^1(f_\sigma)\).

We can expand \(\omega\) in a Whitney form basis as
\(\omega = \lambda_1^{(\sigma)} \lambda_2^{(\sigma)} (\phi_{23}^{(\sigma)} + \phi_{13}^{(\sigma)})\),
so applying \(E_{\sigma,T}^{3,1,-}\) by the definition in
\eqref{eq:exttrimmed} yields

\[
E_{\sigma,T}^{3,1,-} \omega =
E_{\sigma,T}^{3,1,-} (
\lambda_1^{(\sigma)} \lambda_2^{(\sigma)} (\phi_{23}^{(\sigma)} + \phi_{13}^{(\sigma)})
)
=
\lambda_1 \lambda_2 (\phi_{23} + \phi_{13}).
\]

On the tetrahedron \(\phi_{23} + \phi_{13} \neq \mathrm{d}\lambda_3\),
so \(E_{\sigma,T}^{3,1,-}\omega \not\in\mathcal{P}_2 \Lambda^1(T).\)

\textbf{Example showing
\(E_{\sigma,T}^{k,r} \mathring{\mathcal{P}}_r^- \Lambda^k(f_\sigma) \not\to \mathcal{P}_r^- \Lambda^k(T)\).}
Let
\(\omega = \lambda_1^{(\sigma)}\lambda_2^{(\sigma)}\phi_{23}^{(\sigma)} \in \mathring{\mathcal{P}}_3^- \Lambda^1(f_\sigma)\)
(in this case again \(k=1\) and \(r=3\)). Expanded in a barycentric
1-form basis, \(\omega\) is
\(\lambda_1^{(\sigma)}(\lambda_2^{(\sigma)})^2\mathrm{d}\lambda_3^{(\sigma)} - \lambda_1^{(\sigma)} \lambda_2^{(\sigma)} \lambda_3^{(\sigma)} \mathrm{d}\lambda_2^{(\sigma)}.\)
According to \eqref{eq:extfull}, \(E_{\sigma,T}^{3,1}\) extends the two
1-forms in this representation of \(\omega\) by the pullbacks of two
different projections, \(P_{T,\sigma,(1,2,0)}\) and
\(P_{T,\sigma,(1,1,1)}\), determined by their barycentric monomials. The
first projection \(P_{T,\sigma,(1,2,0)}\) can be described in terms of
barycentric coordinates as \[(\lambda_1^{(\sigma)},
\lambda_2^{(\sigma)},
\lambda_3^{(\sigma)}) \gets (\lambda_1 + (1/3)\lambda_0,
\lambda_2 + (2/3)\lambda_0, \lambda_3),\] so
\(P_{T,\sigma,(1,2,0)}^*\mathrm{d}\lambda_3^{(\sigma)} = \mathrm{d}\lambda_3.\)
The second projection \(P_{T,\sigma,(1,1,1)}\) is \[
(\lambda_1^{(\sigma)},
\lambda_2^{(\sigma)},
\lambda_3^{(\sigma)}) \gets (\lambda_1 + (1/3)\lambda_0,
\lambda_2 + (1/3)\lambda_0, \lambda_3 + (1/3) \lambda_0),
\] so
\(P_{T,\sigma,(1,1,1)}^*\mathrm{d}\lambda_2^{(\sigma)} = \mathrm{d}\lambda_2 + (1/3) \mathrm{d}\lambda_0.\)
All together, this shows that the extension of \(\omega\) is \[
E_{T,\sigma}^{3,1}\omega =
\lambda_1 \lambda_2^2 \mathrm{d}\lambda_3 - 
\lambda_1 \lambda_2 \lambda_3 (\mathrm{d}\lambda_2 + (1/3)\mathrm{d}\lambda_0).
\] The easiest way to show that this is not in
\(\mathcal{P}_3^- \Lambda^1(T)\) is to apply the Koszul operator because
\(\kappa_x p \in \mathcal{P}_3 \Lambda^0(T)\) for every
\(p \in \mathcal{P}_3^- \Lambda^1(T)\) and every \(x\). By
\cite[Theorem 3.1]{arnold2006finite} we have
\(\kappa_x(\lambda^\alpha \mathrm{d}\lambda_i) = \lambda^\alpha (\lambda_i - \lambda_i(x))\)
for each multi-index \(\alpha\) and each \(i \in [0\range{n}]\).
Choosing \(x = v_1\) where
\(\lambda_0(v_1) = \lambda_2(v_i) = \lambda_3(v_1) = 0\) gives us \[
\kappa_{v_1} E_{T,\sigma}^{3,1}\omega = -(1/3) \lambda_0\lambda_1\lambda_2\lambda_3 \not\in \mathcal{P}_3 \Lambda^0(T).
\]

\hypertarget{the-bubble-decomposition-of-mathringlambdakoverline-t}{%
\subsection{\texorpdfstring{The bubble decomposition of
\(\mathring{\Lambda}^k(\overline T)\)}{The bubble decomposition of \textbackslash{}mathring\{\textbackslash{}Lambda\}\^{}k(\textbackslash{}overline T)}}\label{the-bubble-decomposition-of-mathringlambdakoverline-t}}

Even though each \(\omega \in \mathring{\Lambda}^k(\overline T)\) is
trace-free, \(\omega_x\) for \(x\in \partial T\) is not necessarily zero
because there can be components of \(\omega_x\) that are perpendicular
to the trace operator \(\mathrm{Tr}\) at \(x\). In this section we
define a way to decompose a trace-free \(k\)-form based on these
perpendicular traces called the \emph{bubble decomposition}. This
decomposition is the foundation of the unified extension operator we
define below. We present the bubble decomposition for the \(n\)-simplex
\(T\): the definition extends naturally to
\(\mathring{\Lambda}^k(\overline{f_\sigma})\) for each boundary simplex
\(f_\sigma\).

Let \(k\in[0\range{n}]\) be given. For each \(d \geq n - k\) and each
\(\sigma \in \Sigma([0\range{d}],[0\range{n}])\), let \(\omega_\sigma\)
be a \((k-(n-d))\)-form in \(\Lambda^{k - (n-d)}(\overline{f_\sigma})\).
Define
\(\mathring{E}_{\sigma,T}: \Lambda^{k-(n-d)} (\overline{f_\sigma}) \to \mathring\Lambda^k(\overline{T})\)
by

\begin{equation}\label{eq:ringe}
\mathring{E}_{\sigma,T}: \omega_\sigma \mapsto P_{T,\sigma}^*\omega_\sigma \wedge (\lambda_{\sigma}(\mathrm{d}\lambda)_{\sigma^*}).
\end{equation}

The range of \(\mathring{E}_{\sigma,T}\) is trace-free because the form
\(\lambda_\sigma(\mathrm{d}\lambda)_{\sigma^*}\) appearing in the right
hand side of \eqref{eq:ringe} is trace-free and because trace operators
distribute over the wedge product. We first show that
\(\mathring{E}_{\sigma,T}\) is an injection.

\begin{lemma}\label{lem:ringe-injective}
$\mathring{E}_{\sigma,T}$ defined by \eqref{eq:ringe}
is injective.
\end{lemma}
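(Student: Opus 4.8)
The plan is to show that the map $\mathring{E}_{\sigma,T}$ in \eqref{eq:ringe} has trivial kernel by extracting $\omega_\sigma$ back from its image via a trace. The key observation is that the factor $\lambda_\sigma(\mathrm{d}\lambda)_{\sigma^*}$ on $f_\sigma$ itself is nonzero in a controlled way: on $f_\sigma$ the coordinates indexed by $\sigma^*$ vanish, but their differentials $(\mathrm{d}\lambda)_{\sigma^*}$ pull back to a nonzero $(n-d)$-form on $f_\sigma$, in fact a multiple of the "volume-complementary" form there. So applying $\mathrm{Tr}_\sigma$ to $\mathring{E}_{\sigma,T}\omega_\sigma$ and using that $\mathrm{Tr}_\sigma$ distributes over the wedge product and is a left inverse of $P_{T,\sigma}^*$ (since $P_{T,\sigma}$ restricted to $f_\sigma$ is the identity) should recover $\omega_\sigma$ up to a nonvanishing scalar factor supported on $f_\sigma$.

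More concretely, first I would note that $\mathrm{Tr}_\sigma P_{T,\sigma}^* = (\,P_{T,\sigma}\circ i_\sigma)^* = \mathrm{id}$ on $\Lambda^{k-(n-d)}(\overline{f_\sigma})$, because $i_\sigma: f_\sigma \hookrightarrow T$ followed by $P_{T,\sigma}: T\to f_\sigma$ is the identity on $f_\sigma$. Next I would compute $\mathrm{Tr}_\sigma\bigl(\lambda_\sigma(\mathrm{d}\lambda)_{\sigma^*}\bigr)$: since $\lambda_{\sigma^*(j)}$ is one of the barycentric coordinates of $T$ that does \emph{not} vanish on $f_\sigma$... wait — actually $\sigma^*$ indexes the vertices \emph{not} in $f_\sigma$, so on $f_\sigma$ these $\lambda_{\sigma^*(j)}$ are zero, but $\mathrm{d}\lambda_{\sigma^*(j)}$ restricted to $f_\sigma$ is nonzero; in fact the sum $\sum_{j}\lambda_{\sigma^*(j)}$ on $f_\sigma$ equals $1 - \sum_{i}\lambda_{\sigma(i)} = 0$ in value but its differential is minus the sum of the $\mathrm{d}\lambda_{\sigma(i)}$. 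The cleanest route is to invoke the positively-oriented-simplex identity from the Preliminaries applied to $f_\sigma$ as its own oriented $d$-simplex, showing $\mathrm{Tr}_\sigma(\mathrm{d}\lambda)_{\sigma^*}$ is a fixed nonzero multiple of $\mathrm{vol}^{(\sigma)}$, the volume form on $f_\sigma$ — but since $\lambda_\sigma$ vanishes on the boundary of $f_\sigma$ and is positive in its interior, $\mathrm{Tr}_\sigma(\lambda_\sigma(\mathrm{d}\lambda)_{\sigma^*})$ is a nonzero scalar function times a fixed $(n-d)$-form. Then wedging an injective-into-its-complement argument: if $\mathring{E}_{\sigma,T}\omega_\sigma = 0$ then $\mathrm{Tr}_\sigma$ of it is zero, giving $\omega_\sigma \wedge \bigl(\text{nonzero }(n-d)\text{-form}\bigr) = 0$ pointwise in the interior of $f_\sigma$, and wedging with a fixed nonzero decomposable $(n-d)$-form whose "direction" is complementary to $\mathrm{Alt}^{k-(n-d)}T_xf_\sigma$ is injective on $\Lambda^{k-(n-d)}(\overline{f_\sigma})$, forcing $\omega_\sigma = 0$ by continuity.

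The main obstacle I anticipate is pinning down precisely that the $(n-d)$-form $\mathrm{Tr}_\sigma\bigl((\mathrm{d}\lambda)_{\sigma^*}\bigr)$ is (up to nonzero scalar) the right "wedge-complement" form so that wedging with it is injective on $(k-(n-d))$-forms over the $d$-dimensional tangent space of $f_\sigma$. This is really a statement that a fixed decomposable top-complement form $\eta$ in $\mathrm{Alt}^{n-d}(\mathbb{R}^d{}^{\!*}$-ish setting$)$ — here $(\mathrm{d}\lambda)_{\sigma^*}$ restricted to $f_\sigma$ — has the property that $\mu \mapsto \mu \wedge \eta$ is injective $\mathrm{Alt}^{k-(n-d)}\to\mathrm{Alt}^k$; but since $k \le d$ and the target $\mathrm{Alt}^k$ over the $d$-dimensional space may be larger than the domain, this requires checking $\eta$ is not a "degenerate" complement. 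I would handle this by working in barycentric-coordinate form $k$-forms on $f_\sigma$: expand $\omega_\sigma = \sum_{\tau\in\Sigma([0:k-(n-d)-1],\sigma)} a_\tau (\mathrm{d}\lambda^{(\sigma)})_\tau$, wedge with $\lambda_\sigma\mathrm{Tr}_\sigma(\mathrm{d}\lambda)_{\sigma^*}$, and observe that the resulting top-degree-in-$\sigma^*$ terms have coefficients $\pm\lambda_\sigma a_\tau$ that are linearly independent as barycentric $k$-forms on $f_\sigma$, so vanishing forces each $a_\tau = 0$. The sign bookkeeping for which $\tau$ survive is the fiddly-but-routine part; the conceptual content is just that $\mathrm{Tr}_\sigma$ recovers $\omega_\sigma$ from its extension.
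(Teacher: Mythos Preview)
Your approach has a genuine error: the claim that $\mathrm{Tr}_\sigma(\mathrm{d}\lambda)_{\sigma^*}$ is nonzero is false. For $j\in[\![\sigma^*]\!]$ the barycentric coordinate $\lambda_j$ vanishes \emph{identically} on $f_\sigma$, so $\mathrm{Tr}_\sigma\,\mathrm{d}\lambda_j = i_\sigma^*\,\mathrm{d}\lambda_j = \mathrm{d}(\lambda_j\circ i_\sigma) = \mathrm{d}(0) = 0$. Hence $\mathrm{Tr}_\sigma(\mathrm{d}\lambda)_{\sigma^*}=0$ (this is exactly the fact used in the proof of \cref{lem:injection}: $\mathrm{Tr}_\sigma(\mathrm{d}\lambda)_\rho=0$ whenever $[\![\rho]\!]\not\subseteq[\![\sigma]\!]$). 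Your assertion that ``$\mathrm{d}\lambda_{\sigma^*(j)}$ restricted to $f_\sigma$ is nonzero'' conflates the pointwise value of the $\mathrm{Alt}^1\mathbb{R}^n$-valued function at points of $f_\sigma$ (indeed a nonzero constant) with its pullback to the tangent space of $f_\sigma$ (which is zero). The dimension mismatch you should have caught: $(\mathrm{d}\lambda)_{\sigma^*}$ is an $(n-d)$-form, while $\mathrm{vol}^{(\sigma)}$ is a $d$-form.

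The consequence is fatal for your strategy: $\mathrm{Tr}_\sigma\bigl(\mathring{E}_{\sigma,T}\omega_\sigma\bigr)=0$ for \emph{every} $\omega_\sigma$, because the range of $\mathring{E}_{\sigma,T}$ lies in $\mathring{\Lambda}^k(\overline T)$ by construction. So the trace cannot recover $\omega_\sigma$. The paper instead works entirely on $T$: it expands $\omega_\sigma$ in the barycentric basis $\{(\mathrm{d}\lambda^{(\sigma)})_{\hat\rho}\}$, computes $(DP_{T,\sigma})^*(\mathrm{d}\lambda^{(\sigma)})_{\hat\rho}\wedge(\mathrm{d}\lambda)_{\sigma^*}=(\mathrm{d}\lambda)_{\hat\rho}\wedge(\mathrm{d}\lambda)_{\sigma^*}$ directly (the extra $\mathrm{d}\lambda_j$ terms from the pullback, $j\in[\![\sigma^*]\!]$, die upon wedging with $(\mathrm{d}\lambda)_{\sigma^*}$), and reads off that $\mathring{E}_{\sigma,T}\omega_\sigma$ is a combination of the linearly independent $k$-forms $(\mathrm{d}\lambda)_{\hat\rho}\wedge(\mathrm{d}\lambda)_{\sigma^*}$ on $T$ with coefficients $\lambda_\sigma\,P_{T,\sigma}^*\alpha_{\hat\rho}$. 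Your fallback idea in the last paragraph---expand in a basis and check independence---is in spirit the right move and close to what the paper does, but you must carry it out on $T$, not after applying $\mathrm{Tr}_\sigma$.
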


\begin{proof}
Let $\hat\Sigma = \Sigma([1\range{k-(n-d)}],[\![\sigma\setminus \sigma(0) ]\!])$.
We note that $\{(\mathrm{d}\lambda^{(\sigma)})_{\hat\rho}\}_{\hat\rho\in\hat\Sigma}$ is a basis for $\mathrm{Alt}^{k-(n-d)} T_x f_\sigma$,
so it is sufficient to show that
$\mathring{E}_{\sigma,T} \alpha (\mathrm{d}\lambda^{(\sigma)})_{\hat\rho} \neq 0$
for every nonzero $\alpha \in C^\infty(\overline{f_\sigma})$ and every $\hat\rho\in\hat\Sigma$.

We note that the pullback of a barycentric $1$-form by $(DP_{T,\sigma})^*$ is
$$
(DP_{T,\sigma})^* \mathrm{d}\lambda_{i}^{(\sigma)} = 
\mathrm{d}\lambda_i + \frac{1}{d+1} \sum_{j\in [\![\sigma^* ]\!]} \mathrm{d}\lambda_j, \quad i \in [\![\sigma ]\!].
$$
Because $\mathrm{d}\lambda_j \wedge \mathrm{d}\lambda_{\sigma^*} = 0$ if and only if
$j\in [\![\sigma^* ]\!]$, this implies
$(DP_{T,\sigma})^* \mathrm{d}\lambda_i^{(\sigma)} \wedge \mathrm{d}\lambda_{\sigma^*}
=
\mathrm{d}\lambda_i \wedge \mathrm{d}\lambda_{\sigma^*}$
for each $i \in [\![\sigma ]\!]$, and in general
$$
(DP_{T,\sigma})^* (\mathrm{d}\lambda^{(\sigma)})_{\hat\rho} \wedge \mathrm{d}\lambda_{\sigma^*}
=
(\mathrm{d}\lambda)_{\hat\rho} \wedge \mathrm{d}\lambda_{\sigma^*}, \quad
[\![\hat\rho ]\!] \subseteq [\![\sigma ]\!].
$$
Therefore
$$
\begin{aligned}
\mathring{E}_{\sigma,T} \alpha (\mathrm{d}\lambda^{(\sigma)})_{\hat\rho}
&=
P_{T,\sigma}^*(\alpha (\mathrm{d}\lambda^{(\sigma)})_{\hat\rho}) 
\wedge (\lambda_\sigma (\mathrm{d}\lambda)_{\sigma^*})
\\
&=
(\lambda_\sigma P_{T,\sigma}^* \alpha)\ ((D P_{T,\sigma})^* (\mathrm{d}\lambda^{(\sigma)})_{\hat\rho}
\wedge (\mathrm{d}\lambda)_{\sigma^*})
\\
&=
(\lambda_\sigma P_{T,\sigma}^* \alpha)\ ((\mathrm{d}\lambda)_{\hat\rho}
\wedge (\mathrm{d}\lambda)_{\sigma^*}).
\end{aligned}
$$
Neither the scalar term nor the algebraic $k$-form term is identically zero, so
$\mathring{E}_{\sigma,T} \alpha (\mathrm{d}\lambda^{(\sigma)})_{\hat\rho}
\neq 0$.
\end{proof}

The following lemma shows how \(\mathring{E}_{\sigma,T}\) can represent
the components of \(\omega_x\) perpendicular to \(\mathrm{Tr}_{\sigma}\)
for \(x\in f_\sigma\) and
\(\omega\in\mathring{\Lambda}^k(\overline{T})\).

\begin{lemma}\label{lem:ringe-inverse}
Let $\omega \in \mathring{\Lambda}^k(\overline{T})$ be given and let
a $d$-simplex $f_\sigma$ be given such that $d \geq n - k$.
Suppose that $\omega$ vanishes at the boundaries of $f_\sigma$, that is
$\omega|_{f_\tau} = 0$ for every $\tau$ such that $[\![\tau ]\!] \subsetneq [\![\sigma ]\!]$.  Then there is a unique $\omega_\sigma \in \Lambda^{k - (n-d)}(\overline{f_\sigma})$ such that
$$
\omega|_{f_\sigma} = \mathring{E}_{\sigma,T}(\omega_\sigma) |_{f_\sigma}.
$$
\end{lemma}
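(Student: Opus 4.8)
The plan is to let trace-freeness dictate the algebraic shape of $\omega_x$ along $f_\sigma$, read off the scalar coefficients, divide them by $\lambda_\sigma$ using the boundary-vanishing hypothesis, and recognize the quotient as the desired $\omega_\sigma$; uniqueness is then the $f_\sigma$-restricted version of the injectivity already established in Lemma~\ref{lem:ringe-injective}. Throughout, fix the basis $\{\mathrm{d}\lambda_i : i\in[0\range n]\setminus\{\sigma(0)\}\}$ of $\mathrm{Alt}^1\mathbb{R}^n$, the monomial basis of $\mathrm{Alt}^k\mathbb{R}^n$ it induces, and $\hat\Sigma := \Sigma([1\range{k-(n-d)}],\image{\sigma\setminus\sigma(0)})$ as in Lemma~\ref{lem:ringe-injective}.

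First, for $x$ in the relative interior of $f_\sigma$ and each $j\in\image{\sigma^*}$, the facet $f_\tau$ with $\image\tau = [0\range n]\setminus\{j\}$ contains $x$, so $(\mathrm{Tr}_\tau\omega)_x = (Di_\tau)_x^*\omega_x = 0$; by the description of $\ker(Di_\tau)^*$ used in the proof of Lemma~\ref{lem:czero}, $\omega_x$ lies in the span of the barycentric $k$-forms $(\mathrm{d}\lambda)_\rho$ with $j\in\image\rho$. Intersecting over $j\in\image{\sigma^*}$ (the $\mathrm{d}\lambda_j$ here being members of our basis, hence independent) shows $\omega_x\in\mathrm{span}\{(\mathrm{d}\lambda)_\rho : \image{\sigma^*}\subseteq\image\rho\} = \mathrm{span}\{(\mathrm{d}\lambda)_{\hat\rho}\wedge(\mathrm{d}\lambda)_{\sigma^*} : \hat\rho\in\hat\Sigma\}$. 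On $\partial f_\sigma$ the same containment is trivial since $\omega$ vanishes there by hypothesis, so for every $x\in\overline{f_\sigma}$ we have $\omega_x = \sum_{\hat\rho\in\hat\Sigma} c_{\hat\rho}(x)\,(\mathrm{d}\lambda)_{\hat\rho}\wedge(\mathrm{d}\lambda)_{\sigma^*}$ with the $c_{\hat\rho}$ uniquely determined (the $(\mathrm{d}\lambda)_{\hat\rho}\wedge(\mathrm{d}\lambda)_{\sigma^*}$ being, up to sign, distinct monomial basis $k$-forms) and smooth on $\overline{f_\sigma}$ as restrictions of the coefficient functions of the smooth form $\omega$.

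Next, the facets of $f_\sigma$ are the $f_\tau$ with $\image\tau = \image\sigma\setminus\{\sigma(i)\}$, all of which satisfy $\image\tau\subsetneq\image\sigma$, so by hypothesis $\omega$ — and hence, by uniqueness of the expansion, each $c_{\hat\rho}$ — vanishes on every facet of $f_\sigma$. A smooth function on the simplex $\overline{f_\sigma}$ vanishing on all of its facets equals $\lambda_\sigma$ times a smooth function (Hadamard's lemma, once per barycentric coordinate of $f_\sigma$), so $a_{\hat\rho} := c_{\hat\rho}/\lambda_\sigma\in C^\infty(\overline{f_\sigma})$ and we set $\omega_\sigma := \sum_{\hat\rho\in\hat\Sigma} a_{\hat\rho}\,(\mathrm{d}\lambda^{(\sigma)})_{\hat\rho}\in\Lambda^{k-(n-d)}(\overline{f_\sigma})$. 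Since $P_{T,\sigma}$ restricts to the identity on $f_\sigma$ and $(DP_{T,\sigma})^*(\mathrm{d}\lambda^{(\sigma)})_{\hat\rho}\wedge(\mathrm{d}\lambda)_{\sigma^*} = (\mathrm{d}\lambda)_{\hat\rho}\wedge(\mathrm{d}\lambda)_{\sigma^*}$ by the computation in Lemma~\ref{lem:ringe-injective}, we get $\mathring{E}_{\sigma,T}(\omega_\sigma)_x = \lambda_\sigma(x)\sum_{\hat\rho}a_{\hat\rho}(x)\,(\mathrm{d}\lambda)_{\hat\rho}\wedge(\mathrm{d}\lambda)_{\sigma^*} = \omega_x$ for $x\in f_\sigma$, which is the asserted identity. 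For uniqueness, if $\omega_\sigma'$ is another solution, the same computation applied to $\omega_\sigma-\omega_\sigma'$ multiplies its coefficients by $\lambda_\sigma$, which is positive on the relative interior of $f_\sigma$; those coefficients therefore vanish there and, by continuity, on all of $\overline{f_\sigma}$.

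I expect the algebraic-constraint step to be the main obstacle: extracting from trace-freeness alone the statement that $\omega_x$ lies in the exterior ideal generated by $(\mathrm{d}\lambda)_{\sigma^*}$ for $x\in f_\sigma$ requires keeping careful track of which codimension-one faces of $T$ contain a generic point of $f_\sigma$ and invoking the right linear-algebra fact about $k$-forms restricting to zero on a hyperplane. The remaining work — the Hadamard division, which is the only place the boundary-vanishing hypothesis is actually used, and the verification — is bookkeeping on identities already available from the proof of Lemma~\ref{lem:ringe-injective}.
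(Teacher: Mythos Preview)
Your proof is correct, and it reaches the same conclusion as the paper's, but the two arguments diverge in the first step---establishing that $\omega_x$, for $x\in f_\sigma$, lies in the span of $\{(\mathrm{d}\lambda)_{\hat\rho}\wedge(\mathrm{d}\lambda)_{\sigma^*}\}_{\hat\rho\in\hat\Sigma}$. The paper invokes \cref{thm:ringoall} to write $\omega=\mathring{\star}_T\mu$ and then reads the algebraic shape off the defining sum \eqref{eq:ringo}, using that $\lambda_{\rho^*}$ vanishes on $f_\sigma$ unless $[\![\rho^*]\!]\subseteq[\![\sigma]\!]$. You instead argue directly from trace-freeness: for $x$ interior to $f_\sigma$ and each $j\in[\![\sigma^*]\!]$, the codimension-one face $f_\tau$ with $[\![\tau]\!]=[0\range n]\setminus\{j\}$ contains $x$, so $(Di_\tau)^*\omega_x=0$ forces $\omega_x$ into the ideal generated by $\mathrm{d}\lambda_j$; intersecting over~$j$ (which works cleanly because you fixed a monomial basis built from $\{\mathrm{d}\lambda_i:i\neq\sigma(0)\}$, making each kernel a coordinate subspace) gives the claim. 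Your route is more elementary---it needs only the kernel description already quoted in the proof of \cref{lem:czero} and avoids the surjectivity of $\mathring{\star}_T$, which in turn rests on \cref{lem:ringotwice} and \cref{lem:czero}---while the paper's route exploits machinery it has just built and keeps the argument shorter on the page. After this divergence the two proofs coincide: both factor the scalar coefficients through $\lambda_\sigma$ using the boundary-vanishing hypothesis (you cite Hadamard's lemma explicitly; the paper simply asserts the division), recognize the result as $\mathring{E}_{\sigma,T}\omega_\sigma$ via the identity $(DP_{T,\sigma})^*(\mathrm{d}\lambda^{(\sigma)})_{\hat\rho}\wedge(\mathrm{d}\lambda)_{\sigma^*}=(\mathrm{d}\lambda)_{\hat\rho}\wedge(\mathrm{d}\lambda)_{\sigma^*}$ from \cref{lem:ringe-injective}, and deduce uniqueness from the positivity of $\lambda_\sigma$ on the interior of $f_\sigma$.
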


\begin{proof}
By \cref{thm:ringoall},
there exists $\mu \in \Lambda^{n-k}(\overline{T})$ such that
$\omega = \mathring{\star}_T \mu$, so for $x\in f_\sigma$,

$$
\begin{aligned}
\omega_x &=
n!|T| \sum_{\rho\in\Sigma([0\range{k-1}],[0\range{n}])}
\star(\mu_x \wedge (\mathrm{d}\lambda)_\rho)\ (\lambda_{\rho^*})_x (\mathrm{d}\lambda)_{\rho}
\\
&=
n!|T| \sum_{\substack{\rho\in\Sigma([0\range{k-1}],[0\range{n}]) \\ [\![\rho^* ]\!] \subseteq [\![\sigma ]\!]}} \star(\mu_x \wedge (\mathrm{d}\lambda)_\rho)\ (\lambda_{\rho^*})_x (\mathrm{d}\lambda)_{\rho},
\end{aligned}
$$
where we have eliminated terms from the sum that are zero because
$\lambda_{\rho^*}|_{f_\sigma} = 0.$
But if $\rho \in \Sigma([0\range{k-1}],[0\range{n}])$ and $[\![\rho^* ]\!] \subseteq
[\![\sigma ]\!]$, then there is $\hat\rho \in \Sigma([0\range{k-(n-d)-1}],[\![\sigma ]\!])$
such that
$(\mathrm{d}\lambda)_{\rho} = \pm (\mathrm{d}\lambda)_{\hat\rho} \wedge (\mathrm{d}\lambda)_{\sigma^*}.$
Therefore there exist some smooth functions $\{\alpha_{\hat\rho}\}
\subset C^\infty (\overline{f_\sigma})$
such that
$$
\omega|_{f_\sigma} =
\sum_{\hat\rho\in\Sigma([0\range{k-(n-d)-1}],[\![\sigma ]\!])} \alpha_{\hat\rho} ((\mathrm{d}\lambda)_{\hat\rho} \wedge (\mathrm{d}\lambda)_{\sigma^*}).
$$

By the same argument from the proof of lemma~\ref{lem:ringe-injective},
we have $(\mathrm{d}\lambda)_{\hat\rho} \wedge (\mathrm{d}\lambda)_{\sigma^*} = P_{T,\sigma}^* 
(\mathrm{d}\lambda^{(\sigma)})_{\hat\rho} \wedge (\mathrm{d}\lambda)_{\sigma^*}$,
so
$$
\omega|_{f_\sigma} =
\sum_{\hat\rho\in\Sigma([0\range{k-(n-d)-1}],[\![\sigma ]\!])} \alpha_{\hat\rho} (P_{T,\sigma}^*(\mathrm{d}\lambda^{(\sigma)})_{\hat\rho} \wedge (\mathrm{d}\lambda)_{\sigma^*}).
$$
Finally, we stipulated that $\omega$ vanishes at the boundaries
of $f_\sigma$, so for each $\alpha_{\hat\rho}$ there is
$\tilde{\alpha}_{\hat\rho} \in C^{\infty}(\overline{f_\sigma})$ such that
$\alpha_{\hat\rho} = \lambda_{\sigma} \tilde{\alpha}_{\hat\rho}$,
and so
$$
\begin{aligned}
\omega|_{f_\sigma} &=
P_{T,\sigma}^*
\big(
\sum_{\hat\rho\in\Sigma([0\range{k-(n-d)-1}],[\![\sigma ]\!])} \tilde\alpha_{\hat\rho} (\mathrm{d}\lambda^{(\sigma)})_{\hat\rho} \big)\wedge (\lambda_{\sigma}(\mathrm{d}\lambda)_{\sigma^*})
\\
&=
\mathring{E}_{\sigma,T} \big(
\sum_{\hat\rho\in\Sigma([0\range{k-(n-d)-1}],[\![\sigma ]\!])} \tilde\alpha_{\hat\rho} (\mathrm{d}\lambda^{(\sigma)})_{\hat\rho} \big)
\big|_{f_\sigma}.
\end{aligned}
$$
The uniqueness follows from lemma~\ref{lem:ringe-injective}.
\end{proof}

Having established a way to encode the perpendicular component of
\(\omega|_{f_\sigma}\) for a single boundary simplex \(f_\sigma\) using
\(\mathring{E}_{\sigma,T}\), we now construct the full bubble
decomposition of a trace-free \(k\)-form.

We denote with \(Z^k(T)\) the set of increasing maps \(\sigma\) for
which \(\mathring{E}_{\sigma,T}\) is defined, \[
Z^k(T) := \bigcup_{d = n-k}^n \Sigma([0\range{d}],[0\range{n}]),
\] and use \(\sigma\in Z^k(T)\) to index the \emph{\(k\)-bubble trace
space} \(\mathbf{B}^k(T)\), the product of the spaces for which
\(\mathring{E}_{\sigma,T}\) is defined,

\[
\mathbf{B}^k(T) :=  \bigotimes_{\sigma \in Z^k(T)}
\Lambda^{k - (n- \dim(f_\sigma))}(\overline{f_\sigma}).
\]

Finally, we define
\(\mathring{E}_T:\mathbf{B}^k(T) \to \mathring{\Lambda}^k(\overline T)\)
to be the sum of the extensions of each of the components of the product
space: given \(W = \otimes_{\sigma\in Z^k(T)} \omega_\sigma\),

\begin{equation}\label{eq:bubbleinv}
\mathring{E}_T: W \mapsto \sum_{\sigma \in Z^k(T)} \mathring{E}_{\sigma,T}\omega_\sigma.
\end{equation}

\begin{theorem}\label{thm:ringe-biject}
$\mathring{E}_T$ is a bijection,
$\mathring{E}_T: \mathbf{B}^k(T) \stackrel{\sim\,}{\to}\Lambda^k(\overline T).$
\end{theorem}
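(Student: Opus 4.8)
Throughout I take the target of $\mathring{E}_T$ to be $\mathring{\Lambda}^k(\overline T)$ — the trace-free space into which it was defined to map, and which the statement should name on the right. The plan is to establish injectivity and surjectivity separately, in each case by peeling off the boundary simplices in order of increasing dimension. The engine of both arguments is the elementary fact that the factor $\lambda_\sigma$ in $\mathring{E}_{\sigma,T}\omega_\sigma = P_{T,\sigma}^*\omega_\sigma \wedge (\lambda_\sigma(\mathrm{d}\lambda)_{\sigma^*})$ vanishes pointwise on $f_\tau$ exactly when $[\![\sigma]\!]\not\subseteq[\![\tau]\!]$; in particular $\mathring{E}_{\sigma,T}\omega_\sigma$ vanishes pointwise on every simplex of dimension strictly below $\dim f_\sigma$, and on a simplex of the same dimension as $f_\sigma$ it vanishes unless that simplex is $f_\sigma$ itself. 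Thus $\mathring{E}_T$ is ``triangular'' with respect to the filtration of $\mathring{\Lambda}^k(\overline T)$ by order of vanishing on the skeleton, and \cref{lem:ringe-injective,lem:ringe-inverse} say its ``diagonal blocks'' are isomorphisms.

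For surjectivity, let $\omega\in\mathring{\Lambda}^k(\overline T)$. First I would settle the base case: by \cref{thm:ringoall} there is $\mu\in\Lambda^{n-k}(\overline T)$ with $\omega=\mathring{\star}_T\mu$, and evaluating \eqref{eq:ringo} on the $(n-k)$-form $\mu$ shows that every term of $\omega_x$ carries a factor $\lambda_{\rho^*}$ with $|\rho^*|=n+1-k$; hence $\omega$ vanishes pointwise on every simplex of dimension at most $n-k-1$. Then I would induct on $d$ running from $n-k$ to $n$, maintaining a remainder $\omega^{(d-1)}\in\mathring{\Lambda}^k(\overline T)$ that vanishes pointwise on the $(d-1)$-skeleton: for each $d$-simplex $f_\sigma$, \cref{lem:ringe-inverse} yields a unique $\omega_\sigma\in\Lambda^{k-(n-d)}(\overline{f_\sigma})$ with $\omega^{(d-1)}|_{f_\sigma}=\mathring{E}_{\sigma,T}(\omega_\sigma)|_{f_\sigma}$; set $\omega^{(d)}:=\omega^{(d-1)}-\sum_{\dim f_\sigma=d}\mathring{E}_{\sigma,T}\omega_\sigma$, and check, using the vanishing fact above, that $\omega^{(d)}$ is again trace-free on $T$ and vanishes on the whole $d$-skeleton (on lower simplices because each subtracted term carries $\lambda_\sigma$, and on a $d$-simplex $f_{\sigma'}$ because only the $\sigma=\sigma'$ term survives there). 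At $d=n$ this forces $\omega^{(n)}=0$, so $\omega=\mathring{E}_T\big(\otimes_{\sigma}\omega_\sigma\big)$.

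For injectivity, suppose $\sum_{\sigma\in Z^k(T)}\mathring{E}_{\sigma,T}\omega_\sigma=0$ and induct on $d=\dim f_\sigma$. Restricting the identity to a $d$-simplex $f_{\sigma_0}$ kills every term except $\mathring{E}_{\sigma_0,T}\omega_{\sigma_0}$: those with $\dim f_\sigma<d$ because the inductive hypothesis has already shown their $\omega_\sigma$ vanish, and those with $\dim f_\sigma\ge d$, $\sigma\neq\sigma_0$, because of the factor $\lambda_\sigma$. Hence $\mathring{E}_{\sigma_0,T}(\omega_{\sigma_0})|_{f_{\sigma_0}}=0$, and the computation in the proof of \cref{lem:ringe-injective} identifies this restriction with a sum over a linearly independent set of $k$-forms whose coefficients are $(\lambda_{\sigma_0}\alpha_{\hat\rho})|_{f_{\sigma_0}}$; since $\lambda_{\sigma_0}\not\equiv0$ on $f_{\sigma_0}$ and $P_{T,\sigma_0}^*$ restricts to the identity there, each $\alpha_{\hat\rho}$, and therefore $\omega_{\sigma_0}$, is zero.

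The main obstacle, and the step that deserves the most care, is the bookkeeping in the surjective induction: confirming that after subtracting the degree-$d$ contributions the remainder $\omega^{(d)}$ still meets the hypotheses of \cref{lem:ringe-inverse} one level up — still trace-free on $T$, still vanishing on the entire $d$-skeleton — and that the pieces attached to distinct $d$-simplices do not interfere on one another's simplices. Every part of this reduces to the single combinatorial fact about when $\lambda_\sigma$ vanishes on $f_\tau$, which I would isolate as a preliminary observation and invoke repeatedly.
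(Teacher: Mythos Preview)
Your proposal is correct and follows essentially the same approach as the paper: both arguments exploit the ``triangular'' structure of $\mathring{E}_T$ with respect to the filtration by vanishing on the $d$-skeleton, using the observation that $\lambda_\sigma|_{f_\tau}=0$ iff $[\![\sigma]\!]\not\subseteq[\![\tau]\!]$ together with \cref{lem:ringe-injective,lem:ringe-inverse}. The only cosmetic differences are that your surjectivity induction runs bottom-up (constructing the decomposition by subtracting contributions dimension by dimension) while the paper's runs top-down (showing the range contains forms vanishing on successively lower skeletons, starting from $\omega|_{\partial T}=0 \Rightarrow \omega=\lambda_T\tilde\omega=\mathring{E}_{T,T}\tilde\omega$), and that you make explicit via \cref{thm:ringoall} the fact that every $\omega\in\mathring{\Lambda}^k(\overline T)$ already vanishes on simplices of dimension $<n-k$, which the paper leaves implicit in its induction termination.
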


\begin{proof}
Lemma~\ref{lem:ringe-injective} showed that 
$\mathring{E}_{\sigma,T}$ is injective for each individual boundary simplex $f_\sigma$, but we must show
that the sum of contributions from different boundary simplices is still injective.

Suppose
$\omega = \mathring{E}_T W =  \sum_{\sigma \in Z^k(T)} \mathring{E}_{\sigma,T}\omega_\sigma = 0.$
Let $d$ be the smallest dimension such that $\omega_\sigma= 0$ for
every $\sigma \in Z^k(T)$ such that $\dim(f_\sigma) < d$, and choose $\hat\sigma\in Z^k(T)$ such that $\dim(f_{\hat\sigma}) = d$.
Given $\tau \in Z^k(T)$ such that $\tau \neq \sigma$ and
$\dim(f_{\tau}) \geq d$, we have
\begin{equation}\label{eq:cross-trace}
(\mathring{E}_{\tau,T} \omega_\tau)|_{f_{\hat\sigma}}
=
(P_{\tau,T}^* \omega) \wedge
(\lambda_{\tau} (\mathrm{d}\lambda)_{\tau^*})|_{f_{\hat\sigma}} = 0,
\end{equation}
a conclusion we reach because $[\![\tau ]\!] \not\subseteq [\![\hat\sigma ]\!]$, and so $\lambda_{\tau}|_{f_{\hat\sigma}} = 0.$
Therefore
$$
0 = \omega|_{f_\sigma} = (\mathring{E}_T W)|_{f_\sigma} =
(\mathring{E}_{\hat\sigma,T} \omega_{\hat\sigma})|_{f_\sigma}.
$$
By the same argument as in lemma~\ref{lem:ringe-injective}, this
implies $\omega_{\hat\sigma} = 0.$  Because $\hat\sigma$ was
arbitrarily chosen, $\omega_\sigma = 0$ for each $d$-simplex
$f_\sigma$, but this violates the way $d$ was chosen, so
we must conclude $W = 0$.

Now we prove that $\mathring{E}_{T}$ is surjective.
Suppose $\omega \in \Lambda^k(\overline T)$ vanishes at $\partial T$.
Then there exists $\tilde{\omega} \in \Lambda^k(\overline T)$ such that
$\omega = \lambda_T \tilde{\omega} = \mathring{E}_{T,T}\tilde \omega.$

Now suppose that we have shown that the range of $\mathring{E}_{T}$
includes all $k$-forms that vanish at all $f_\sigma$ with $\dim(f_\sigma) \leq d$, and let $\omega \in \Lambda^k$ be a $k$-form that vanishes at each $f_\sigma$ with $\dim(f_\sigma) < d.$
For each $\hat\sigma$ such that $\dim(f_{\hat\sigma}) = d$ there is, by lemma~\ref{lem:ringe-inverse}, $\hat\omega_{\hat\sigma}\in\Lambda^{k-(n-d)}(\overline{f_{\hat\sigma}})$ such that
$(\mathring{E}_{\hat\sigma,T} \hat\omega_{\hat\sigma})|_{f_{\hat\sigma}}
= \omega|_{f_{\hat\sigma}}.$
Let
$$
\tilde \omega = \omega - \sum_{\hat\sigma \in \Sigma([0\range{d}],[0\range{n}])}
\mathring{E}_{\hat\sigma,T} \hat \omega_{\hat\sigma}.
$$
By \eqref{eq:cross-trace}, $\tilde \omega$ vanishes at
all $d$ simplices, so there exists $\tilde{W} = \otimes_{\sigma\in Z^k(T)} \tilde{\omega}_{\sigma}$
such that $\tilde{\omega} = \mathring{E}_{T} \tilde{W}$.  Hence defining
$W = \otimes_{\sigma\in Z^k(T)} \omega_\sigma$ by
$$
\omega_{\sigma} = \begin{cases}
\tilde{\omega}_{\sigma}, & \dim(f_\sigma) \neq d, \\
\tilde{\omega}_{\sigma}  - \hat{\omega}_{\sigma}, & \dim(f_\sigma) = d,
\end{cases}
$$
we have constructed $W$ such that $\mathring{E}_{T} W = \omega$.
By induction, we conclude $\mathring{E}_{T}$ is surjective.
\end{proof}

The proof of the bijection
\(\mathring{E}_T: \mathbf{B}^k(T) \stackrel{\sim\,}{\to}\mathring{\Lambda}^k(\overline T)\)
shows that \(\mathring{E}_{\sigma,T}\) operators define a decomposition
of \(\mathring{\Lambda}(\overline T)\),
\begin{equation}\label{eq:bubble-decomp}
\mathring{\Lambda}^k(\overline T)
= \bigoplus_{\sigma \in Z^k(T)} \mathring{E}_{\sigma,T} [
\Lambda^{k - (n-d)}(\overline f_\sigma)
].
\end{equation} We define the inverse of \(\mathring{E}_T\) to be the
operator
\(\dot{B}_T^k: \mathring{\Lambda}^k(\overline T) \stackrel{\sim\,}{\to}\mathbf{B}^k(T)\),
and we let
\(\dot{B}_{T,\sigma}^k: \mathring{\Lambda}^k(\overline T) \stackrel{\sim\,}{\to}\Lambda^{k-(n-d)}(\overline{f_\sigma})\)
be the projection of \(\dot{B}_{T,\sigma}^k\) onto the \(\sigma\)
component. We have two remarks about this operator.

First, the proof of \cref{thm:ringe-biject} shows a constructive
procedure for evaluating \((B_{T,\sigma}^k \omega)_x\). One must
evaluate \(\omega\) at \(P_{T,\rho}(x)\) for each simplex \(f_\rho\)
such that \([\![\rho ]\!] \subseteq [\![\sigma ]\!]\), and solve the
system \[
\omega_{P_{T,\rho}(x)} = \lambda_\rho (B_{T,\rho}^k \omega)_{P_{T,\rho}(x)}
+ \sum_{\substack{\hat\rho\in Z^k(T) \\ [\![\hat\rho ]\!] \subset [\![\rho ]\!]}}
(\mathring{E}_{\hat\rho,T}
B_{T,\hat\rho}^k \omega)_{P_{T,\rho}(x)}, \quad
[\![\rho ]\!] \subseteq [\![\sigma ]\!].
\] This is a triangular system that can be solved from the smallest
dimension to the largest.

The second remark is that, while we have shown that this bubble
decomposition exists for all of \(\mathring{\Lambda}^k(\overline T)\),
it cannot be used to define a basis for every subspace
\(X(T) \subset \Lambda^k(\overline T)\). On the one hand, it can be used
to define a basis for the trace-free full polynomials
\(\mathring{\mathcal{P}}_r \Lambda^k(T),\)
\begin{equation}\label{eq:bubble-full}
\mathring{\mathcal{P}}_r \Lambda^k(T)
= \bigoplus_{\sigma \in Z^k(T)} \mathring{E}_{\sigma,T} [
\mathcal{P}_{r - \dim(f_\sigma)-1}\Lambda^{k - (n - \dim(f_\sigma))} (f_\sigma)
].
\end{equation}

On the other hand, the components of a trace-free trimmed \(k\)-form
\(\omega \in \mathring{\mathcal{P}}_r^- \Lambda^k(\overline T)\) will
not necessarily be in the same space. As a simple example, consider
\(\lambda_0 \phi_{12} \in \mathring{\mathcal{P}}_2^- \Lambda^1 (T)\) for
the triangle \(T = \{v_0, v_1, v_2\}\). Its bubble decomposition is \[
\lambda_0 \phi_{12} =
\lambda_0 \lambda_2 \mathrm{d}\lambda_1
-\lambda_0 \lambda_1 \mathrm{d}\lambda_2 =
\underbrace{P_{T,(0,2)}^*(1) \wedge (\lambda_0 \lambda_2 \mathrm{d}\lambda_1)}_{\mathring{E}_{(0,2),T} (1)}
+
\underbrace{P_{T,(0,1)}^*(-1) \wedge (\lambda_0 \lambda_1 \mathrm{d}\lambda_2)}_{\mathring{E}_{(0,1),T} (-1)},
\] and each of its bubble components is in
\(\mathcal{P}_2 \Lambda^1(T)\) but not \(\mathcal{P}_2^- \Lambda^1(T)\).

\hypertarget{the-dote_sigmat-operator}{%
\subsection{\texorpdfstring{The \(\dot{E}_{\sigma,T}\)
operator}{The \textbackslash{}dot\{E\}\_\{\textbackslash{}sigma,T\} operator}}\label{the-dote_sigmat-operator}}

We can generalize the bubble decomposition from the previous section to
boundary simplices. Given \(\sigma\) and \(\xi\) such that
\([\![\sigma ]\!]\subseteq[\![\xi ]\!]\), we can define
\(\mathring{E}_{\sigma,\xi}: \Lambda^{k - (\dim(f_\xi) - \dim(f_\sigma))}(\overline{f_\sigma}) \to \mathring{\Lambda}^k(\overline{f_\xi})\)
by \begin{equation} \label{eq:ring-xi}
\mathring{E}_{\sigma,\xi}: \omega_\sigma
\mapsto P_{\xi,\sigma}^* \omega_\sigma \wedge (\lambda_{\sigma}^{(\xi)}
(\mathrm{d}\lambda^{(\xi)})_{\xi \setminus \sigma}),
\end{equation} and we have a similar decomposition result to
\eqref{eq:bubble-decomp}, \[
\mathring{\Lambda}^k(\overline{f_\xi})
= \bigoplus_{\sigma \in Z^k(f_\xi)} \mathring{E}_{\sigma,\xi} [
\Lambda^{k - (\dim(f_\xi) - \dim(f_\sigma))}(\overline f_\sigma)
].
\] We use the bubble decomposition to define the extension operator
\(\dot{E}_{\tau,\xi}: \mathring{\Lambda}^{k}(\overline{f_\sigma}) \to \Lambda^k (\overline {f_\xi})\)
when \([\![\tau ]\!]\subseteq [\![\xi ]\!]\) by its action on the bubble
components: \begin{equation} \label{eq:dotty-xi}
\dot{E}_{\tau,\xi}: \mathring{E}_{\sigma,\tau} \omega_\sigma \mapsto
P_{\xi,\sigma}^* \omega_\sigma \wedge (\lambda_{\sigma}^{(\xi)}
\wedge
(\mathrm{d}\lambda^{(\xi)})_{\tau \setminus \sigma}).
\end{equation} Note the difference between \eqref{eq:ring-xi} and
\eqref{eq:dotty-xi} is only in the barycentric form wedged with the
pullback: for
\(\dot{E}_{\tau,\xi} \mathring{E}_{\sigma,\tau} \omega_\sigma\) it is
\((\mathrm{d}\lambda^{\xi})_{\tau \setminus \sigma}\), so that the
product
\(\lambda_\sigma^{(\xi)} (\mathrm{d}\lambda^{(\xi)})_{\tau \setminus \sigma}\)
is \emph{not} trace-free on \(f_\xi\), but has nonzero trace on any
boundary simplex of \(f_\xi\) that contains \(f_\tau\).

\begin{theorem}
Let a family of extension operators be defined by
$$\{
\dot{E}_{\tau,\xi}: \mathring{\Lambda}^k(\overline{f_\tau})
\to \Lambda^k(\overline{f_\xi}), [\![\tau ]\!] \subseteq [\![\xi ]\!]
\},$$
where $\dot{E}_{\tau,\xi}$ is defined \eqref{eq:dotty-xi}.  The family
is a consistent family, that is for simplices $f_\rho$, $f_\tau$ and
$f_\xi$ such that $[\![\rho ]\!],[\![\tau ]\!] \subseteq [\![\xi ]\!]$,
we have
\begin{equation}\label{eq:consistent-family}
\mathrm{Tr}_{\xi,\rho} \dot{E}_{\tau,\xi} \omega = 
\dot{E}_{\rho \cap \tau, \rho} \mathrm{Tr}_{\tau,\rho\cap\tau} \omega,
\quad \omega \in \mathring{\Lambda}^k(\overline{f_\sigma}).
\end{equation}
\end{theorem}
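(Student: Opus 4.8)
The plan is to verify the consistency relation \eqref{eq:consistent-family} by reducing to the action of $\dot E_{\tau,\xi}$ and $\dot E_{\rho\cap\tau,\rho}$ on a single bubble-component $\mathring E_{\sigma,\tau}\omega_\sigma$ of the input $\omega$, since both sides are linear in $\omega$ and the bubble decomposition \eqref{eq:bubble-decomp} (for $f_\tau$) writes $\omega$ as a sum of such components over $\sigma\in Z^k(f_\tau)$. So fix $\sigma$ with $[\![\sigma]\!]\subseteq[\![\tau]\!]$ and a component $\omega_\sigma\in\Lambda^{k-(\dim f_\tau-\dim f_\sigma)}(\overline{f_\sigma})$; I will compute both sides of \eqref{eq:consistent-family} applied to $\mathring E_{\sigma,\tau}\omega_\sigma$ and check they agree.

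For the left side, by \eqref{eq:dotty-xi}, $\dot E_{\tau,\xi}\mathring E_{\sigma,\tau}\omega_\sigma = P_{\xi,\sigma}^*\omega_\sigma \wedge\big(\lambda_\sigma^{(\xi)}(\mathrm d\lambda^{(\xi)})_{\tau\setminus\sigma}\big)$, and then I apply $\mathrm{Tr}_{\xi,\rho}=i_{\rho,\xi}^*$. Here the key point is that the trace kills $\lambda_j^{(\xi)}$ for $j\notin[\![\rho]\!]$ and kills $(\mathrm d\lambda_j^{(\xi)})$ wedged into the product for $j\notin[\![\rho]\!]$, exactly as used repeatedly in \cref{lem:injection}, \cref{lem:ringe-injective}, and \cref{thm:ringe-biject}. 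So the trace of the right-hand factor $\lambda_\sigma^{(\xi)}(\mathrm d\lambda^{(\xi)})_{\tau\setminus\sigma}$ vanishes unless $[\![\sigma]\!]\cup[\![\tau\setminus\sigma]\!]=[\![\tau]\!]\subseteq[\![\rho]\!]$, i.e.\ unless $[\![\tau]\!]\subseteq[\![\rho]\!]$, in which case $\tau\cap\rho=\tau$ and both sides reduce to $\dot E_{\tau,\rho}$ applied to the same thing; and when $[\![\tau]\!]\not\subseteq[\![\rho]\!]$ the left side is zero, so I must check the right side is zero too. For the right side, $\mathrm{Tr}_{\tau,\rho\cap\tau}\mathring E_{\sigma,\tau}\omega_\sigma$: by the same trace-killing principle applied on $f_\tau$, the factor $\lambda_\sigma^{(\tau)}(\mathrm d\lambda^{(\tau)})_{\tau\setminus\sigma}$ has trace on $f_{\rho\cap\tau}$ vanishing unless $[\![\tau]\!]\subseteq[\![\rho\cap\tau]\!]=[\![\rho]\!]\cap[\![\tau]\!]$, i.e.\ unless $[\![\tau]\!]\subseteq[\![\rho]\!]$ — matching the left side's vanishing pattern exactly. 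When $[\![\tau]\!]\not\subseteq[\![\rho]\!]$ the argument inside $\dot E_{\rho\cap\tau,\rho}$ is a trace that is zero, so the right side is zero, matching.

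In the remaining case $[\![\tau]\!]\subseteq[\![\rho]\!]$ (so $\rho\cap\tau=\tau$), I must show $\mathrm{Tr}_{\xi,\rho}\dot E_{\tau,\xi}\mathring E_{\sigma,\tau}\omega_\sigma = \dot E_{\tau,\rho}\mathring E_{\sigma,\tau}\omega_\sigma$, both sides now being $k$-forms on $f_\rho$. The left side is $i_{\rho,\xi}^*\big(P_{\xi,\sigma}^*\omega_\sigma\wedge(\lambda_\sigma^{(\xi)}(\mathrm d\lambda^{(\xi)})_{\tau\setminus\sigma})\big)$. Using that the pullback distributes over the wedge, that $i_{\rho,\xi}^*\lambda_j^{(\xi)}=\lambda_j^{(\rho)}$ and $i_{\rho,\xi}^*(\mathrm d\lambda^{(\xi)})_{\tau\setminus\sigma}=(\mathrm d\lambda^{(\rho)})_{\tau\setminus\sigma}$ (valid since $[\![\tau]\!]\subseteq[\![\rho]\!]$), and the compatibility $P_{\xi,\sigma}\circ i_{\rho,\xi}=P_{\rho,\sigma}$ of centroid projectors with inclusions — which follows from \eqref{eq:p-compose} together with the fact that $\mathrm{Tr}_{\xi,\rho}$ is a left inverse of $P_{\xi,\rho}^*$ as noted before \eqref{eq:tr-inv-p} — the left side becomes $P_{\rho,\sigma}^*\omega_\sigma\wedge(\lambda_\sigma^{(\rho)}(\mathrm d\lambda^{(\rho)})_{\tau\setminus\sigma})$, which is precisely $\dot E_{\tau,\rho}\mathring E_{\sigma,\tau}\omega_\sigma$ by \eqref{eq:dotty-xi}. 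The main obstacle I anticipate is bookkeeping: making sure the relation $i_{\rho,\xi}^*\circ P_{\xi,\sigma}^* = P_{\rho,\sigma}^*$ is correctly justified (the cleanest route is via \eqref{eq:tr-inv-p}-style identities and \eqref{eq:p-compose} rather than recomputing on vertices) and confirming that the index sets $Z^k(\cdot)$ line up so that $\mathring E_{\sigma,\tau}\omega_\sigma$ actually appears as a bubble component on $f_\tau$ with the right form-degree $k-(\dim f_\tau-\dim f_\sigma)$; the form-degree and wedge-degree accounting must be tracked carefully but involves no hard computation.
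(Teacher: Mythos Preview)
Your proposal is correct and follows essentially the same route as the paper's own proof: reduce by linearity to a single bubble component $\mathring E_{\sigma,\tau}\omega_\sigma$, split into the two cases $[\![\tau]\!]\not\subseteq[\![\rho]\!]$ (both sides vanish by the trace-free property of $\lambda_\sigma(\mathrm d\lambda)_{\tau\setminus\sigma}$) and $[\![\tau]\!]\subseteq[\![\rho]\!]$ (so $\rho\cap\tau=\tau$, and a direct computation using distribution of the trace over the wedge and the identity $\mathrm{Tr}_{\xi,\rho}\circ P_{\xi,\sigma}^* = P_{\rho,\sigma}^*$ from \eqref{eq:tr-inv-p} finishes). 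The only cosmetic difference is that the paper invokes \eqref{eq:tr-inv-p} by name for the pullback compatibility, whereas you rederive it from \eqref{eq:p-compose} and the left-inverse property; either is fine.
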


\begin{proof}
Let $\xi$ be a $d$-simplex and let $\tau$ be a boundary simplex, $[\![\tau ]\!] \subseteq [\![\xi ]\!]$.  Let $f_\sigma$ be any boundary simplex of $f_\tau$ such that
$\dim(f_\sigma) \geq \dim(f_\tau) - k$, and let $\omega_\sigma
\in \Lambda^{k - (\dim(f_\tau) - \dim(f_\sigma))} (\overline{f_\sigma})$
be given.  $\mathring{E}_{\sigma,\tau} \omega_\sigma$ is a $k$-form
in $\mathring{\Lambda}^k (\overline{f_\sigma})$.

First let a simplex $f_\rho$ be given such that $[\![\tau ]\!] \not\subseteq
[\![\rho ]\!]$.  Because $\mathring{E}_{\sigma,\tau} \omega_\sigma$
is trace-free, $\mathrm{Tr}_{\tau,\rho\cap\tau}\mathring{E}_{\sigma,\tau} \omega_\sigma = 0$, and so
$$\dot{E}_{\rho \cap \tau, \rho} \mathrm{Tr}_{\tau,\rho\cap\tau}
\mathring{E}_{\sigma,\tau} \omega_\sigma = 0.$$
By the definition in \eqref{eq:dotty-xi}, we see that
$\mathrm{Tr}_{\xi,\rho} \dot{E}_{\tau,\xi} \mathring{E}_{\sigma,\tau}
\omega_\sigma = 0$ as well, because $\mathrm{Tr}_{\xi,\rho}\lambda_{\sigma}^{(\xi)} (\mathrm{d}\lambda^{(\xi)})_{\tau \setminus \sigma} = 0.$

Now let a simplex $f_\rho$ be given such that $[\![\tau ]\!] \subseteq
[\![\rho ]\!]$, which implies $\mathrm{Tr}_{\tau,\rho\cap\tau}
\mathring{E}_{\sigma,\tau} \omega_\sigma = \omega_\sigma.$
We compute the left hand side of \eqref{eq:consistent-family}:
$$
\begin{aligned}
\mathrm{Tr}_{\xi,\rho} \dot{E}_{\tau,\xi}( \mathring{E}_{\sigma,\tau} \omega_\sigma) 
&=
\mathrm{Tr}_{\xi,\rho} (P_{\xi,\sigma}^* \omega_\sigma \wedge
(\lambda^{(\xi)}_{\sigma} (\mathrm{d}\lambda^{(\xi)})_{\tau \setminus \sigma})
)
\\
&=
(\mathrm{Tr}_{\xi,\rho} P_{\xi,\sigma}^* \omega_\sigma) \wedge
(\mathrm{Tr}_{\xi,\rho} \lambda^{(\xi)}_{\sigma} (\mathrm{d}\lambda^{(\xi)})_{\tau \setminus \sigma})
\\
&=
(P_{\rho,\sigma}^* \omega_\sigma) \wedge
(\lambda^{(\rho)}_{\sigma} (\mathrm{d}\lambda^{(\rho)})_{\tau \setminus \sigma})
\\
&=
\dot{E}_{\tau,\rho} \mathring{E}_{\sigma,\tau} \omega_\sigma
=
\dot{E}_{\rho \cap \tau, \rho} \mathrm{Tr}_{\tau, \rho \cap \tau}
(\mathring{E}_{\sigma,\tau} \omega_\sigma),
\end{aligned}
$$
where we have used \eqref{eq:tr-inv-p} between the second and third right hand sides.
Because every $\omega \in \mathring{\Lambda}^k(\overline{f_\tau})$ has a bubble decomposition, this completes the proof.
\end{proof}

\hypertarget{geometric-decompositions}{%
\subsection{Geometric decompositions}\label{geometric-decompositions}}

With \(\dot{E}_{\sigma,\tau}\) defining a consistent family of extension
operators, we can now show that it defines a geometric decomposition of
all of \(\Lambda^k(\overline T)\).

\begin{theorem}
With $\dot{E}_{\sigma,T}$ defined as in \eqref{eq:dotty-xi} taking $f_\xi = T$,
$$
\Lambda^k(\overline T) =
\bigoplus_{\substack{\sigma \\ \dim(f_\sigma) \geq k}}
\dot{E}_{\sigma,T}[ \mathring{\Lambda}^k(\overline{f_\sigma}) ].
$$
\end{theorem}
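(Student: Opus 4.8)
The plan is to treat this as the standard consequence of having (i) a consistent family of extension operators --- the relation~\eqref{eq:consistent-family} just proved for $\dot{E}_{\tau,\xi}$ --- together with (ii) a direct-sum decomposition of each trace-free space into ``perpendicular traces'', which is exactly the bubble bijection $\mathring{E}_T$ of \cref{thm:ringe-biject} and its analogue on every boundary simplex. Write $d_\sigma := \dim f_\sigma$. There are two things to establish: that the sum $\sum_{d_\sigma \geq k} \dot{E}_{\sigma,T}[\mathring{\Lambda}^k(\overline{f_\sigma})]$ is direct, and that it is all of $\Lambda^k(\overline T)$. Before either, I would record the only elementary fact not already in hand: setting $\xi = \tau$ in \eqref{eq:ring-xi} and comparing with \eqref{eq:dotty-xi} gives $\dot{E}_{\tau,\tau}\mathring{E}_{\sigma,\tau}\omega_\sigma = \mathring{E}_{\sigma,\tau}\omega_\sigma$, so by the bubble decomposition of $\mathring{\Lambda}^k(\overline{f_\tau})$ the operator $\dot{E}_{\tau,\tau}$ is the identity on $\mathring{\Lambda}^k(\overline{f_\tau})$. (I would also note that $\mathring{\Lambda}^k(\overline{f_\sigma})$ is the zero space when $d_\sigma < k$, which is why the index set is as stated.)

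For directness, suppose $\sum_\sigma \dot{E}_{\sigma,T}\omega_\sigma = 0$ with $\omega_\sigma \in \mathring{\Lambda}^k(\overline{f_\sigma})$ not all zero, and let $\hat\sigma$ have least dimension among indices with $\omega_{\hat\sigma} \neq 0$. Apply $\mathrm{Tr}_{T,\hat\sigma}$ and use \eqref{eq:consistent-family} with $\xi = T$, $\rho = \hat\sigma$: the $\sigma$-term becomes $\dot{E}_{\sigma\cap\hat\sigma,\hat\sigma}\,\mathrm{Tr}_{\sigma,\sigma\cap\hat\sigma}\omega_\sigma$. Terms with $d_\sigma < d_{\hat\sigma}$ vanish by the choice of $\hat\sigma$; for $\sigma \neq \hat\sigma$ with $d_\sigma \geq d_{\hat\sigma}$ one has $[\![\sigma]\!] \not\subseteq [\![\hat\sigma]\!]$, so $f_{\sigma\cap\hat\sigma}$ is a proper boundary simplex of $f_\sigma$ and the trace of the trace-free form $\omega_\sigma$ onto it vanishes; and the term $\sigma = \hat\sigma$ equals $\dot{E}_{\hat\sigma,\hat\sigma}\omega_{\hat\sigma} = \omega_{\hat\sigma}$. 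Hence $\omega_{\hat\sigma} = 0$, contradicting the choice of $\hat\sigma$.

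For surjectivity I would induct on $m \in \{k-1, k, \dots, n\}$ the statement: there are forms $\omega_\sigma \in \mathring{\Lambda}^k(\overline{f_\sigma})$ for every $\sigma$ with $d_\sigma \leq m$ such that $u_m := u - \sum_{d_\sigma \leq m} \dot{E}_{\sigma,T}\omega_\sigma$ satisfies $\mathrm{Tr}_{T,\sigma}u_m = 0$ whenever $d_\sigma \leq m$. The base case $m = k-1$ is vacuous: no forms are needed, and $\mathrm{Tr}_{T,\sigma}u = 0$ automatically for $d_\sigma < k$ since $\Lambda^k(\overline{f_\sigma}) = 0$. For the step $m \to m+1$, for each $\rho$ with $d_\rho = m+1$ the restriction $\mathrm{Tr}_{T,\rho}u_m$ lies in $\mathring{\Lambda}^k(\overline{f_\rho})$, because its traces onto the proper boundary simplices of $f_\rho$ --- all of dimension $\leq m$ --- coincide with traces of $u_m$ and hence vanish by hypothesis; set $\omega_\rho := \mathrm{Tr}_{T,\rho}u_m$ and $u_{m+1} := u_m - \sum_{d_\rho = m+1}\dot{E}_{\rho,T}\omega_\rho$. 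Repeating the trace computation of the directness argument (and using $\dot{E}_{\rho,\rho} = \mathrm{id}$ for the diagonal term, which gives $\mathrm{Tr}_{T,\rho}\dot{E}_{\rho,T}\omega_\rho = \omega_\rho = \mathrm{Tr}_{T,\rho}u_m$) shows $\mathrm{Tr}_{T,\sigma}u_{m+1} = 0$ both for $d_\sigma \leq m$ and for $d_\sigma = m+1$. At $m = n$ the only new index is $\rho = T$, with $\omega_T = u_{n-1} \in \mathring{\Lambda}^k(\overline T)$ and $\dot{E}_{T,T}\omega_T = \omega_T$, so $u_n = 0$ and $u = \sum_{d_\sigma \geq k}\dot{E}_{\sigma,T}\omega_\sigma$.

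I do not expect a genuinely hard step here: the proof is induction plus disciplined bookkeeping of the trace relation \eqref{eq:consistent-family}. The one point that needs care --- and which drives both halves --- is that ``$u_m$ has vanishing trace on every face of dimension $\leq m$'' is exactly the hypothesis that makes $\mathrm{Tr}_{T,\rho}u_m$ trace-free on an $(m+1)$-face $f_\rho$, so that it is a legitimate input to $\dot{E}_{\rho,T}$; and that under a trace $\mathrm{Tr}_{T,\hat\sigma}$ the only surviving contribution from a face of equal or larger dimension is the one indexed by $f_{\hat\sigma}$ itself. The small identity $\dot{E}_{\sigma,\sigma}|_{\mathring{\Lambda}^k(\overline{f_\sigma})} = \mathrm{id}$ is what lets both computations close.
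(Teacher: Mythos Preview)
Your proof is correct and follows essentially the same induction-on-dimension strategy as the paper: peel off traces on faces of a given dimension using the consistency relation \eqref{eq:consistent-family} together with the identity $\dot{E}_{\sigma,\sigma} = \mathrm{id}$ on $\mathring{\Lambda}^k(\overline{f_\sigma})$, then recurse. Your write-up is in fact more complete than the paper's, which only sketches the surjectivity half and leaves directness to the reader via the reference to \cref{thm:ringe-biject}; your explicit directness argument (take the lowest-dimensional nonzero contributor, trace onto that face, and observe all other contributions vanish) is exactly the right way to fill that gap.
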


\begin{proof}
The proof proceeds along the same lines as the proof of the bubble decompostion in \cref{thm:ringe-biject}. Implicit in the fact that
$\dot{E}_{\sigma,\tau}$ defines a consistent family of extension operators is that
$$
\mathrm{Tr}_{\sigma} \dot{E}_{\sigma,T} \omega = \omega, \quad \omega \in \mathring{\Lambda}^k(\overline{f_\sigma}).
$$
This means that if  $\omega \in \mathring{\Lambda}^k(\overline{T})$
then $\omega = \dot{E}_{T,T} \omega$.
From this base case one can inductively assume
that every $\omega$ has a geometric decomposition in the $\dot{E}_{\sigma,\tau}$ family if
it is trace-free on boundary simplices with dimension less than or equal to $d$.  Then taking $\omega$ that is trace-free on boundary simplicies with dimension less than $d$ and define
$$
\tilde{\omega} = \omega - \sum_{\sigma \in \Sigma([0\range{d}],[0\range{n}])} \dot{E}_{\sigma,T}
\mathrm{Tr}_{\sigma} \omega,
$$
which by the inductive assumption has a geometric decomposition in the $\dot{E}_{\sigma,\tau}$ family, and so $\omega$ has one as well.  By the inductive assumption, this completes the proof.
\end{proof}

We now show that the \(\dot{E}_{\sigma,\tau}\) extension operator
defines a geometric decomposition of full polynomial \(k\)-forms.

\begin{theorem}\label{thm:dotty-full}
With $\dot{E}_{\sigma,T}$ defined as in \eqref{eq:dotty-xi} taking $f_\xi = T$,
$$
\mathcal{P}_r \Lambda^k(T) =
\bigoplus_{\substack{\sigma \\ \dim(f_\sigma) \geq k}}
\dot{E}_{\sigma,T}[ \mathring{\mathcal{P}}_r \Lambda^k(f_\sigma) ].
$$
\end{theorem}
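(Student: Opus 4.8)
The plan is to prove the two inclusions separately, leaning on two facts already in hand: the general geometric decomposition $\Lambda^k(\overline T)=\bigoplus_{\dim f_\sigma\ge k}\dot{E}_{\sigma,T}[\mathring{\Lambda}^k(\overline{f_\sigma})]$ established immediately above, and the bubble decomposition \eqref{eq:bubble-full} of trace-free full polynomials, which, by the very same construction together with affine invariance, holds verbatim on each boundary simplex $f_\sigma$ in place of $T$ (with $n$ replaced by $\dim f_\sigma$ and the degree shift $r-\dim f_\rho-1$ on the $f_\rho$-component).

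The first step is to show that $\dot{E}_{\sigma,T}$ maps $\mathring{\mathcal{P}}_r\Lambda^k(f_\sigma)$ into $\mathcal{P}_r\Lambda^k(T)$; this yields the inclusion $\supseteq$ at once, and since the right-hand side then sits inside the direct sum over all of $\mathring{\Lambda}^k$, the sum is automatically direct. Because $\dot{E}_{\sigma,T}$ is defined through its action on bubble components, I would take $\omega\in\mathring{\mathcal{P}}_r\Lambda^k(f_\sigma)$, expand it by the $f_\sigma$-version of \eqref{eq:bubble-full} as $\omega=\sum_\rho\mathring{E}_{\rho,\sigma}\omega_\rho$ with $\omega_\rho\in\mathcal{P}_{r-\dim f_\rho-1}\Lambda^{\bullet}(f_\rho)$, and apply \eqref{eq:dotty-xi}, which gives $\dot{E}_{\sigma,T}\mathring{E}_{\rho,\sigma}\omega_\rho=P_{T,\rho}^{*}\omega_\rho\wedge(\lambda_\rho(\mathrm{d}\lambda)_{\sigma\setminus\rho})$. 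The degree bookkeeping closes the step: $P_{T,\rho}$ is affine so its pullback preserves polynomial degree; $\lambda_\rho$ is a product of $\dim f_\rho+1$ barycentric coordinates, hence has degree $\dim f_\rho+1$; and $(\mathrm{d}\lambda)_{\sigma\setminus\rho}$ is constant. Each summand therefore has degree $(r-\dim f_\rho-1)+(\dim f_\rho+1)=r$, so $\dot{E}_{\sigma,T}\omega\in\mathcal{P}_r\Lambda^k(T)$.

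For the reverse inclusion I would replay the skeleton induction already used twice in the paper — in the surjectivity halves of \cref{thm:ringe-biject} and of the general geometric decomposition above — now also tracking polynomial degree. The base case is immediate: if $\omega\in\mathcal{P}_r\Lambda^k(T)$ is trace-free on every proper face then $\omega\in\mathring{\mathcal{P}}_r\Lambda^k(T)$ by definition and $\omega=\dot{E}_{T,T}\omega$. For the inductive step, assume every element of $\mathcal{P}_r\Lambda^k(T)$ that is trace-free on all faces of dimension $\le d$ lies in the sum, and take $\omega\in\mathcal{P}_r\Lambda^k(T)$ trace-free on all faces of dimension $<d$. For each $\hat\sigma$ with $\dim f_{\hat\sigma}=d$, composing traces shows $\mathrm{Tr}_{\hat\sigma}\omega$ is a full polynomial $k$-form that is trace-free on $f_{\hat\sigma}$, hence lies in $\mathring{\mathcal{P}}_r\Lambda^k(f_{\hat\sigma})$; by the first step $\dot{E}_{\hat\sigma,T}\mathrm{Tr}_{\hat\sigma}\omega\in\mathcal{P}_r\Lambda^k(T)$, and using consistency of the $\dot{E}$ family together with the fact (already exploited in the consistency theorem) that $\lambda_\sigma^{(\xi)}(\mathrm{d}\lambda^{(\xi)})_{\tau\setminus\sigma}$ has vanishing trace onto any face not containing $f_\tau$, one checks that $\tilde\omega=\omega-\sum_{\dim f_{\hat\sigma}=d}\dot{E}_{\hat\sigma,T}\mathrm{Tr}_{\hat\sigma}\omega$ is a $\mathcal{P}_r\Lambda^k(T)$-form that is trace-free on all faces of dimension $\le d$. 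The inductive hypothesis then finishes the proof. Alternatively, the reverse inclusion follows from a dimension count: the first step together with the injectivity of each $\dot{E}_{\sigma,T}$ (injective because $\mathrm{Tr}_\sigma\dot{E}_{\sigma,T}$ is the identity) gives $\dim\bigoplus_\sigma\dot{E}_{\sigma,T}[\mathring{\mathcal{P}}_r\Lambda^k(f_\sigma)]=\sum_\sigma\dim\mathring{\mathcal{P}}_r\Lambda^k(f_\sigma)=\dim\mathcal{P}_r\Lambda^k(T)$, the last equality being the geometric-decomposition dimension identity of \autocite{arnold2006finite}.

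The step I expect to be the main obstacle is the degree bookkeeping of the first step: one has to be careful that \eqref{eq:bubble-full} transfers to each $f_\sigma$ with exactly the shift $r-\dim f_\rho-1$, and that $\dot{E}_{\sigma,T}$ — which differs from the bubble extension $\mathring{E}_{\rho,T}$ only by replacing one constant barycentric form with another — therefore preserves the total degree $r$ rather than merely leaving it $\le r$. Everything downstream of that is a mechanical repetition of the induction the paper has already carried out.
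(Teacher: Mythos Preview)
Your proposal is correct and follows essentially the same route as the paper: the paper's proof uses the bubble decomposition \eqref{eq:bubble-full} on each $f_\tau$ and inspects \eqref{eq:dotty-xi} to see that $\dot{E}_{\tau,T}$ sends $\mathring{\mathcal{P}}_r\Lambda^k(f_\tau)$ into $\mathcal{P}_r\Lambda^k(T)$, then closes with injectivity, consistency, and a dimension count---exactly your first step together with your ``alternatively'' paragraph. Your inductive argument for the reverse inclusion is a valid alternative that the paper does not spell out, but since the paper simply invokes the counting argument, your second option is the closer match.
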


\begin{proof}
Let $f_\tau$ such that $\dim(f_\tau) \geq k$ be given.
By \eqref{eq:bubble-full}, $\mathcal{P}_r \Lambda^k(f_\tau)$ is spanned by bubble functions of the form
$\mathring{E}_{\sigma,\tau} \omega_{\sigma}$ for $\omega_\sigma
\in \mathcal{P}_{r - \dim(f_\sigma) - 1} \Lambda^{k - (\dim(f_\tau) - \dim(f_\sigma))}(f_\sigma).$
By inspection of \eqref{eq:dotty-xi} we see that this means
$\dot{E}_{\sigma,T} \mathring{E}_{\sigma,\tau} \omega_{\sigma}\in
\mathcal{P}_r \Lambda^k(T).$
The injectivity of $\dot{E}_{\sigma,T}$, the fact that it defines a consistent family of extension operators, and a counting argument
are sufficient to complete the proof.
\end{proof}

Our final theorem shows that \(\dot{E}_{\sigma,\tau}\) defines a
geometric decomposition of trimmed polynomial \(k\)-forms as well.

\begin{theorem}
With $\dot{E}_{\sigma,T}$ defined as in \eqref{eq:dotty-xi} taking $f_\xi = T$,
$$
\mathcal{P}_r^- \Lambda^k(T) =
\bigoplus_{\substack{\sigma \\ \dim(f_\sigma) \geq k}}
\dot{E}_{\sigma,T}[ \mathring{\mathcal{P}}_r^- \Lambda^k(f_\sigma) ].
$$
\end{theorem}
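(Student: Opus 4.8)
The plan is to mirror the structure of the proof of \cref{thm:dotty-full}, replacing the role of the full-polynomial bubble decomposition \eqref{eq:bubble-full} with an analogous statement for the trimmed spaces. The crucial difference, as the example $\lambda_0\phi_{12}$ at the end of \cref{sec:extension} illustrates, is that the bubble components of a trace-free trimmed $k$-form need not themselves be trimmed polynomials of a uniform type; so I cannot simply quote an identity like \eqref{eq:bubble-full} for $\mathring{\mathcal{P}}_r^-\Lambda^k$. Instead I would argue directly that $\dot{E}_{\sigma,T}$ maps $\mathring{\mathcal{P}}_r^-\Lambda^k(f_\sigma)$ into $\mathcal{P}_r^-\Lambda^k(T)$, and then finish with consistency, injectivity, and a dimension count exactly as before.

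First I would recall that $\mathring{\mathcal{P}}_r^-\Lambda^k(f_\sigma)$ is the trace-free subspace of $\mathcal{P}_r^-\Lambda^k(f_\sigma)$, and that by \cref{thm:iso2} every such form is $\mathring{\star}_{f_\sigma}$ of some full polynomial; equivalently — and more usefully here — $\mathring{\mathcal{P}}_r^-\Lambda^k(f_\sigma)$ is spanned by $\lambda_{f_\sigma} a_\rho\phi_\rho^{(\sigma)}$-type terms that vanish on $\partial f_\sigma$. For each such generator, its bubble decomposition $\mathring{E}_T$ (applied on $f_\sigma$, i.e.\ the decomposition \eqref{eq:bubble-decomp} relativized to $f_\sigma$) produces components $\omega_\mu\in\Lambda^{k-(\dim f_\sigma-\dim f_\mu)}(\overline{f_\mu})$ for $\mu\in Z^k(f_\sigma)$. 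The key structural claim I would isolate and prove is: each such component $\omega_\mu$ is a scalar polynomial of degree at most $r-\dim(f_\mu)$ times a Whitney form $\phi_{\hat\rho}^{(\mu)}$ on $f_\mu$ — in other words $\omega_\mu\in\mathcal{P}_{r-\dim(f_\mu)}^-\Lambda^{k-(n-\dim f_\mu)}(f_\mu)$ (note the degree drop matches the one Whitney factor that the bubble form $\lambda_\mu(\mathrm{d}\lambda)_{\mu^*}$ "absorbs"). Granting this, applying $\dot{E}_{\sigma,T}$ to each bubble component and using the definition \eqref{eq:dotty-xi}: $\dot{E}_{\sigma,T}$ replaces $\mathring{E}_{\mu,\sigma}\omega_\mu$ by $P_{T,\mu}^*\omega_\mu\wedge(\lambda_\mu^{(T)}(\mathrm{d}\lambda^{(T)})_{\sigma\setminus\mu})$. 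I would check that $P_{T,\mu}^*$ of a trimmed polynomial on $f_\mu$ is a trimmed polynomial on $T$ of the same degree (centroid-projector pullback preserves $\mathcal{P}_r^-$, since it preserves the Koszul-closure property recalled at the end of \cref{sec:notation}), and that wedging with $\lambda_\mu^{(T)}(\mathrm{d}\lambda^{(T)})_{\sigma\setminus\mu}$ — a product of one barycentric factor per index in $\sigma\setminus\mu$ and barycentric $1$-forms — sends $\mathcal{P}_{r-\dim f_\mu}^-$ into $\mathcal{P}_r^-$ (each added $\lambda$ raises the bound by one and each $\mathrm{d}\lambda$ costs nothing in the trimmed degree, consistent with $\phi_\rho\in\mathcal{P}_1^-$). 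That yields $\dot{E}_{\sigma,T}[\mathring{\mathcal{P}}_r^-\Lambda^k(f_\sigma)]\subseteq\mathcal{P}_r^-\Lambda^k(T)$.

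With the containment in hand, the directness of the sum follows from the injectivity of $\dot{E}_{\sigma,T}$ and the consistency of the family (the previous theorem already gives a direct-sum decomposition of all of $\Lambda^k(\overline T)$, and intersecting with $\mathcal{P}_r^-\Lambda^k(T)$ keeps the sum direct); the reverse containment, i.e.\ that the sum exhausts $\mathcal{P}_r^-\Lambda^k(T)$, I would get by the dimension count — sum over $\sigma$ with $\dim f_\sigma\ge k$ of $\dim\mathring{\mathcal{P}}_r^-\Lambda^k(f_\sigma)$ equals $\dim\mathcal{P}_r^-\Lambda^k(T)$, which is the standard geometric-decomposition identity for trimmed spaces (this is exactly the dimension bookkeeping underlying \cite{arnold2009geometric}, and it is consistent with the $\mathring{\star}$-isomorphism $\mathring{\mathcal{P}}_r^-\Lambda^k(f_\sigma)\cong\mathcal{P}_{r-\dim f_\sigma-1+k-(n-\dim f_\sigma)+\cdots}\Lambda^{\cdots}$ one can read off from \cref{thm:iso2}). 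Alternatively, since we already know $\mathcal{P}_r^-\Lambda^k(T)\subseteq\mathcal{P}_r\Lambda^k(T)=\bigoplus_\sigma\dot{E}_{\sigma,T}[\mathring{\mathcal{P}}_r\Lambda^k(f_\sigma)]$ from \cref{thm:dotty-full}, one can project a trimmed form onto its bubble-components and argue componentwise that each lands in $\mathring{\mathcal{P}}_r^-\Lambda^k(f_\sigma)$, using again that $\mathrm{Tr}_\sigma$ of a trimmed polynomial is trimmed — this avoids a separate dimension computation entirely.

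The main obstacle is the structural claim that the bubble components of a trace-free trimmed polynomial, while not uniformly of one "shape", are individually trimmed polynomials of the right degree on their respective faces. The $\lambda_0\phi_{12}$ example is reassuring — its components $\pm 1\in\mathcal{P}_0\Lambda^0$ are trivially trimmed — but in general one must track how the explicit triangular solve for $\dot B^k_{T,\sigma}$ (displayed after \eqref{eq:bubble-decomp}) interacts with the Whitney-form structure. I expect the cleanest route is not to analyze that solve directly but to use \cref{thm:iso2} on $f_\sigma$ together with \cref{lem:czero}: write the trace-free trimmed form as $\mathring{\star}_{f_\sigma}\nu$ for a full polynomial $\nu$, expand $\mathring{\star}_{f_\sigma}\nu$ using \eqref{eq:ringo}, and observe that in that expansion each term already carries exactly one $\lambda_{\rho^*}$ factor, which after one more restriction is precisely the bubble factor $\lambda_\mu$ that $\mathring{E}_{\mu,\sigma}$ strips off — leaving a full polynomial times a fixed $(\mathrm{d}\lambda)$, i.e.\ (by \cref{thm:iso1} run in reverse) a trimmed polynomial. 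Making that identification precise, and pinning down the degree, is the one calculation I would expect to require genuine care.
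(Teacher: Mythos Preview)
Your proposal has a genuine gap at its core step. You claim that wedging the pulled-back bubble component $P_{T,\mu}^*\omega_\mu$ (a trimmed polynomial) with $\lambda_\mu (\mathrm{d}\lambda)_{\sigma\setminus\mu}$ yields a trimmed polynomial on $T$, but this is false, and the very example you call ``reassuring'' refutes it. Take $\sigma = T$ (the triangle), $\omega = \lambda_0\phi_{12}\in\mathring{\mathcal{P}}_2^-\Lambda^1(T)$, and the bubble component $\omega_\mu = 1$ on the edge $f_\mu = f_{(0,2)}$. Then $\dot{E}_{T,T}\mathring{E}_{\mu,T}\omega_\mu = \lambda_0\lambda_2\,\mathrm{d}\lambda_1$, and $\kappa_{v_0}(\lambda_0\lambda_2\,\mathrm{d}\lambda_1) = \lambda_0\lambda_1\lambda_2 \notin \mathcal{P}_2\Lambda^0(T)$, so $\lambda_0\lambda_2\,\mathrm{d}\lambda_1 \notin \mathcal{P}_2^-\Lambda^1(T)$. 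This is precisely what the paper warns about just before introducing $\dot{E}_{\sigma,T}$: the \emph{extended} bubble pieces of a trace-free trimmed form are not individually trimmed --- only their sum is. Your heuristic ``each added $\lambda$ raises the bound by one and each $\mathrm{d}\lambda$ costs nothing'' conflates $\mathcal{P}_r$ with $\mathcal{P}_r^-$: multiplying by $\lambda_i$ or wedging with $\mathrm{d}\lambda_j$ does not in general preserve the Koszul-closure property characterizing trimmed spaces. The alternative route you sketch via $\mathring{\star}_{f_\sigma}$ and \cref{thm:iso2} does not evade this; it would still leave you trying to show each term $\lambda_{\rho^*}(\mathrm{d}\lambda)_\rho$ in the $\mathring{\star}$ expansion extends to a trimmed form, which fails for the same reason.

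The paper's proof circumvents this obstacle by never asserting per-component trimmedness. Instead it verifies the Koszul criterion on the \emph{whole} extension, showing $\kappa^{(\tau)}\dot{E}_{\tau,T}\omega \in \mathcal{P}_r\Lambda^{k-1}(T)$. The key maneuver is to shift the Koszul center from the centroid of $f_\tau$ to the centroid of each bubble face $f_\sigma$ (which introduces only constant-vector interior products, hence $\mathcal{P}_r$ correction terms), prove that $\kappa^{(\sigma)}$ \emph{commutes} with $\dot{E}_{\tau,T}$ on a single bubble piece $\mathring{E}_{\sigma,\tau}\omega_\sigma$, and then shift back. Summing over the bubble decomposition, one obtains $\dot{E}_{\tau,T}\bigl[\kappa^{(\tau)}\omega + (\text{terms in }\mathcal{P}_r\Lambda^{k-1}(f_\tau))\bigr]$ plus $\mathcal{P}_r\Lambda^{k-1}(T)$ corrections; since $\omega$ is trimmed, $\kappa^{(\tau)}\omega\in\mathcal{P}_r\Lambda^{k-1}(f_\tau)$, and \cref{thm:dotty-full} finishes the argument. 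The cancellation of the non-trimmed parts of the individual bubble extensions is hidden inside this commutation, not obtained from any componentwise property.
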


\begin{proof}
It is sufficient to show that
$\dot{E}_{\tau,T} \omega \in \mathcal{P}_r^- \Lambda^k(T)$ for
each $f_\tau$ such that $\dim(f_\tau) \geq k$ and each
$\omega \in \mathring{\mathcal{P}}_r^- \Lambda^k(f_\tau)$.
By the proof of \cref{thm:dotty-full},
it is already known that
$\dot{E}_{\tau,T} \omega \in \mathcal{P}_r \Lambda^k(T)$.
We shall apply the Koszul operator centered at the centroid
of $f_\tau$ to $\dot{E}_{\tau,T} \omega$: if we can show that
$\kappa^{(\tau)} \dot{E}_{\tau,T} \omega \in
\mathcal{P}_r \Lambda^{k-1}(T)$, that is sufficient to prove that
$\dot{E}_{\tau,T} \omega \in \mathcal{P}_r^- \Lambda^k(T)$.

Let the bubble decomposition of $\omega$ be
$$
\omega = \sum_{\sigma \in Z^k(f_\tau)}
\mathring{E}_{\sigma,\tau} \omega_\sigma.
$$
We apply $\kappa^{(\tau)}$ to the extension of a single bubble component, but shift
it to be centered at $\kappa^{(\sigma)}$, which introduces a term
involving the interior product with some constant vector $b_{\sigma,\tau}$, which we
denote $\delta_{\sigma,1}$:
$$
\kappa^{(\tau)} \dot{E}_{\tau,T} \mathring{E}_{\sigma,\tau} \omega_\sigma
=
\kappa^{(\sigma)} \dot{E}_{\tau,T} \mathring{E}_{\sigma,\tau} \omega_\sigma
+
\underbrace{(\dot{E}_{\tau,T} \mathring{E}_{\sigma,\tau} \omega_\sigma) 
\lrcorner b_{\sigma,\tau}}_{\delta_{\sigma,1}}.
$$
Because the vector $b_{\sigma,\tau}$ is constant, the $(k-1)$-form $\delta_{\sigma,1}$ is in $\mathcal{P}_r \Lambda^{k-1}(T)$.

We now address the term $\kappa^{(\sigma)} \dot{E}_{\tau,T} \mathring{E}_{\sigma,\tau} \omega_\sigma$.  We wish to show that in this case $\kappa^{(\sigma)}$ and $\dot{E}_{\tau,T}$ commute.  Using the product rule for the Koszul operator, we have
\begin{align}
\kappa^{(\sigma)} \dot{E}_{\tau,T} \mathring{E}_{\sigma,\tau} \omega_\sigma
&=
\kappa^{(\sigma)}
[P_{T,\sigma}^* \omega_\sigma \wedge (\lambda_\sigma (\mathrm{d}\lambda)_{\tau \setminus \sigma})]
\\
&=
(\kappa^{(\sigma)}P_{T,\sigma}^* \omega_\sigma) \wedge (\lambda_\sigma (\mathrm{d}\lambda)_{\tau \setminus \sigma})
+(-1)^{\tilde{k}}
P_{T,\sigma}^* \omega_\sigma \wedge (\lambda_\sigma \kappa^{(\sigma)}(\mathrm{d}\lambda)_{\tau \setminus \sigma}),\label{eq:koszul-two-terms}
\end{align}
where $\tilde{k} = k - (\dim(f_\tau) - \dim(f_\sigma))$.
The centroid of $f_\sigma$ is fixed by $P_{T,\sigma}$, so $\kappa^{(\sigma)}$ commutes with $P_{T,\sigma}^*$ and the first term becomes
$$
\begin{aligned}
(\kappa^{(\sigma)}P_{T,\sigma}^* \omega_\sigma) \wedge (\lambda_\sigma (\mathrm{d}\lambda)_{\tau \setminus \sigma})
&=
P_{T,\sigma}^*(\kappa^{(\sigma)} \omega_\sigma) \wedge (\lambda_\sigma (\mathrm{d}\lambda)_{\tau \setminus \sigma})
\\
&=
\dot{E}_{\tau,T} \mathring{E}_{\sigma,\tau} (\kappa^{(\sigma)} \omega_\sigma).
\end{aligned}
$$
Because $\lambda_j$ vanishes at the centroid of $f_\sigma$ if
$j \in [\![\tau \setminus \sigma ]\!]$,
we have $\kappa^{(\sigma)} \mathrm{d}\lambda_j = \lambda_j$ for each
such $j$.  Let $m+1$ be the number of indices in $[\![\tau \setminus \sigma ]\!]$ and let $\rho \in \Sigma([0\range{m}],[0\range{n}])$ be an increasing map such that
$[\![\rho ]\!] = [\![\tau \setminus \sigma ]\!]$.
By the product rule
$$
\kappa^{(\sigma)} (\mathrm{d}\lambda)_{\tau \setminus \sigma}
= 
\sum_{j = 0}^m (-1)^m \lambda_{\rho(j)} (\mathrm{d}\lambda)_{\rho \setminus \rho(j)}.
$$
Putting this into the second term in \eqref{eq:koszul-two-terms},
we get
$$
\begin{aligned}
(-1)^{\tilde{k}}
P_{T,\sigma}^* \omega_\sigma \wedge (\lambda_\sigma \kappa^{(\sigma)}(\mathrm{d}\lambda)_{\tau \setminus \sigma})
&=
\sum_{j=0}^m (-1)^{j + \tilde{k}}
P_{T,\sigma}^* \omega_\sigma \wedge (\lambda_{\sigma\cup\rho(j)} (\mathrm{d}\lambda)_{\rho \setminus \rho(j)})
\\
&=
\sum_{j=0}^m 
P_{T,\sigma\cup\rho(j)}^* P_{\sigma\cup\rho(j),\sigma}^* ((-1)^{j + \tilde{k}}\omega_\sigma) \wedge (\lambda_{\sigma\cup\rho(j)} (\mathrm{d}\lambda)_{\rho \setminus \rho(j)})
\\
&=
\dot{E}_{\tau,T}\big[
\sum_{j=0}^m 
\mathring{E}_{\sigma \cup\rho(j),\tau}(P_{\sigma\cup\rho(j),\sigma}^*((-1)^{j + \tilde{k}}\omega_\sigma)) 
\big].
\end{aligned}
$$
Combining both terms, we have
$$
\kappa^{(\sigma)} \dot{E}_{\tau,T} \mathring{E}_{\sigma,\tau} \omega_\sigma
=
\dot{E}_{\tau,T}\big[
\mathring{E}_{\sigma,\tau} (\kappa^{(\sigma)} \omega_\sigma) +
\sum_{j=0}^m 
\mathring{E}_{\sigma \cup\rho(j),\tau}(P_{\sigma\cup\rho(j),\sigma}^*(-1)^{j + \tilde{k}}\omega_\sigma) 
\big].
$$
We now want to show that the term in brackets is $\kappa^{(\sigma)}
\mathring{E}_{\sigma,\tau} \omega_\sigma.$
We essentially follow the steps above in reverse.  For the first term,
$$
\begin{aligned}
\mathring{E}_{\sigma,\tau} (\kappa^{(\sigma)} \omega_\sigma)
&=
P_{\tau,\sigma}^* (\kappa^{(\sigma)} \omega_\sigma)
\wedge (\lambda^{(\tau)}_{\sigma} (\mathrm{d}\lambda^{(\tau)})_{\tau\setminus\sigma})
\\
&=
(\kappa^{(\sigma)} P_{\tau,\sigma}^* \omega_\sigma)
\wedge (\lambda^{(\tau)}_{\sigma} (\mathrm{d}\lambda^{(\tau)})_{\tau\setminus\sigma}).
\end{aligned}
$$
For the second term,
$$
\begin{aligned}
\sum_{j=0}^m 
\mathring{E}_{\sigma \cup\rho(j),\tau}(P_{\sigma\cup\rho(j),\sigma}^*((-1)^{j + \tilde{k}}\omega_\sigma)) 
&=
\sum_{j=0}^m 
P_{\tau,\sigma\cup\rho(j)}^* P_{\sigma\cup\rho(j),\sigma}^* ((-1)^{j + \tilde{k}}\omega_\sigma) \wedge (\lambda_{\sigma\cup\rho(j)}^{(\tau)} (\mathrm{d}\lambda^{(\tau)})_{\rho \setminus \rho(j)})
\\
&=
\sum_{j=0}^m (-1)^{j + \tilde{k}}
P_{\tau,\sigma}^* \omega_\sigma \wedge (\lambda_{\sigma\cup\rho(j)}^{(\tau)} (\mathrm{d}\lambda^{(\tau)})_{\rho \setminus \rho(j)})
\\
&=
(-1)^{\tilde{k}} 
P_{\tau,\sigma}^* \omega_\sigma \wedge
(\lambda_{\sigma}^{(\tau)} \kappa^{(\sigma)} (\mathrm{d}\lambda^{(\tau)})_{\tau \setminus \sigma}).
\end{aligned}
$$
Combining these back together we get the desired result,
$$
(\kappa^{(\sigma)} P_{\tau,\sigma}^* \omega_\sigma)
\wedge (\lambda^{(\tau)}_{\sigma} (\mathrm{d}\lambda^{(\tau)})_{\tau\setminus\sigma})
+
(-1)^{\tilde{k}} 
P_{\tau,\sigma}^* \omega_\sigma \wedge
(\lambda_{\sigma}^{(\tau)} \kappa^{(\sigma)} (\mathrm{d}\lambda^{(\tau)})_{\tau \setminus \sigma})
=
\kappa^{(\sigma)} \mathring{E}_{\sigma,\tau} \omega_\sigma,
$$
that is,
$$
\kappa^{(\sigma)} \dot{E}_{\tau,T} \mathring{E}_{\sigma,\tau} \omega_\sigma
=
\dot{E}_{\tau,T} \kappa^{(\sigma)} \mathring{E}_{\sigma,\tau} \omega_\sigma.
$$
We now shift the center of the Koszul operator back to $\kappa^{(\tau)}$, 
which again introduces an interior product with the constant vector
which we denote $\delta_{\sigma,2}$:
$$
\dot{E}_{\tau,T} \kappa^{(\sigma)} \mathring{E}_{\sigma,\tau} \omega_\sigma
=
\dot{E}_{\tau,T}(\kappa^{(\tau)} \mathring{E}_{\sigma,\tau} \omega_\sigma
-
\underbrace{(\mathring{E}_{\sigma,\tau} \omega_\sigma)\lrcorner b_{\sigma,\tau}}_{\delta_{\sigma,2}}).
$$
As with the $\delta_{\sigma,1}$, we know that $\delta_{\sigma,2} \in \mathcal{P}_r \Lambda^{k-1}(f_\sigma)$.

Why have we gone through the steps of shifting the Koszul operator
between the centroids of $f_{\sigma}$ and $f_{\tau}$?
Because although $\kappa^{(\sigma)} \mathring{E}_{\sigma,\tau} \omega_\sigma$ is trace-free (as demonstrated in the steps that it took to show that $\kappa^{(\sigma)}$ and $\dot{E}_{\tau,T}$ commuted), $\kappa^{(\tau)} \mathring{E}_{\sigma,\tau} \omega_\sigma$ is not necessarily trace-free.
We cannot apply $\dot{E}_{\tau,T}$ to 
$\kappa^{(\tau)} \mathring{E}_{\sigma,\tau} \omega_\sigma$ or $\delta_{\sigma,2}$ separately, because the domain of $\dot{E}_{\tau,T}$
is trace-free $k$-forms.

Nevertheless,
we can now combine into $\kappa^{(\tau)} \dot{E}_{\tau,T} \omega$ the
contributions from the different terms in the bubble decomposition:
$$
\begin{aligned}
\kappa^{(\tau)} \dot{E}_{\tau,T} \omega
&=
\sum_{\sigma \in Z^k(f_\tau)}
\dot{E}_{\tau,T}(\kappa^{(\tau)}\mathring{E}_{\sigma,T} \omega_\sigma + \delta_{\sigma,2})
+
\delta_{\sigma,1}
\\
&=
\dot{E}_{\tau,T}\left[
\kappa^{(\tau)}\omega + 
\sum_{\sigma \in Z^k(f_\tau)}
\delta_{\sigma,2}
\right]
+
\sum_{\sigma \in Z^k(f_\tau)}
\delta_{\sigma,1}.
\end{aligned}
$$
By the assumption that $\omega \in \mathring{\mathcal{P}}_r^- \Lambda^{k}(f_\sigma)$,
we know that $\kappa^{(\tau)}\omega \in \mathcal{P}_r \Lambda^{k-1}
(f_\sigma)$.
Therefore the term in brackets is in 
$\mathring{\mathcal{P}}_r \Lambda^{k-1}(f_\sigma)$.
By \cref{thm:dotty-full}, that implies that the first term,
and thus the whole expression, is in 
$\mathcal{P}_r\Lambda^{k-1}(T)$.
\end{proof}

The main results of this work combine to unify the geometric
decompositions of full polynomials, trimmed polynomials, and all
\(k\)-forms through the action of just two operators acting on the
unrestricted polynomial spaces.

\begin{corollary}
Generalizing the operator $\mathring{\star}_T$ to $\mathring{\star}_{\sigma}:\Lambda^k(\overline{f_\sigma}) \stackrel{\sim\,}{\to}
\mathring{\Lambda}^{\dim(f_\sigma)-k}(\overline{f_\sigma})$,
we have
\begin{align}
\Lambda^k(\overline{T}) &=
\bigoplus_{\substack{\sigma \\ \dim(f_\sigma) \geq k}}
\dot{E}_{\sigma,T}\ \mathring{\star}_{\sigma} [ \Lambda^{\dim(f_\sigma) - k} (\overline{f_\sigma})];
\\
\mathcal{P}_r \Lambda^k(T)
&=
\bigoplus_{\substack{\sigma \\ \dim(f_\sigma) \geq k}}
\dot{E}_{\sigma,T}\ \mathring{\star}_{\sigma} [ \mathcal{P}_{r - \dim(f_\sigma) - k}^- \Lambda^{\dim(f_\sigma) - k} (f_\sigma)];
\\
\mathcal{P}_r^- \Lambda^k(T)
&= 
\bigoplus_{\substack{\sigma \\ \dim(f_\sigma) \geq k}}
\dot{E}_{\sigma,T}\ \mathring{\star}_{\sigma} [ \mathcal{P}_{r - \dim(f_\sigma) - k - 1} \Lambda^{\dim(f_\sigma) - k} (f_\sigma)].
\end{align}
\end{corollary}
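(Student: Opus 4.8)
The plan is to treat the corollary as a formal consequence of the three geometric decomposition theorems already proved on the outer simplex $T$, composed with the trace-free isomorphisms transplanted onto each boundary simplex $f_\sigma$. The first step is to make sense of the generalized operator $\mathring{\star}_\sigma$: since $f_\sigma$ is a $\dim(f_\sigma)$-dimensional simplex lying in a $\dim(f_\sigma)$-dimensional hyperplane with a constant tangent space, its own barycentric coordinates, volume form, Whitney forms, and trace operators, every construction and proof of \cref{trace-free-operators} goes through on $f_\sigma$ with $n$ replaced by $\dim(f_\sigma)$ throughout. Writing $d = \dim(f_\sigma)$, this yields the bijections $\mathring{\star}_\sigma : \Lambda^{j}(\overline{f_\sigma}) \biject \mathring{\Lambda}^{d-j}(\overline{f_\sigma})$ (from \cref{thm:ringoall}), $\mathring{\star}_\sigma : \mathcal{P}_s \Lambda^{j}(f_\sigma) \biject \mathring{\mathcal{P}}_{s+j+1}^{-}\Lambda^{d-j}(f_\sigma)$ (from \cref{thm:iso1}), and $\mathring{\star}_\sigma : \mathcal{P}_s^{-}\Lambda^{j}(f_\sigma) \biject \mathring{\mathcal{P}}_{s+j}\Lambda^{d-j}(f_\sigma)$ (from \cref{thm:iso2}).

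Next I would substitute these bijections into the three decompositions. For the first identity, the geometric decomposition of $\Lambda^k(\overline T)$ established above reads $\Lambda^k(\overline T) = \bigoplus_{\dim(f_\sigma) \geq k} \dot{E}_{\sigma,T}[\mathring{\Lambda}^k(\overline{f_\sigma})]$; taking $j = d - k$ in the first bijection identifies $\mathring{\Lambda}^k(\overline{f_\sigma})$ with $\mathring{\star}_\sigma[\Lambda^{d-k}(\overline{f_\sigma})]$, and since $\mathring{\star}_\sigma$ is a bijection onto that summand the direct sum is preserved after applying $\dot{E}_{\sigma,T}$. For the second identity, start from \cref{thm:dotty-full}, $\mathcal{P}_r \Lambda^k(T) = \bigoplus_{\dim(f_\sigma) \geq k} \dot{E}_{\sigma,T}[\mathring{\mathcal{P}}_r \Lambda^k(f_\sigma)]$, and use the third bijection with $j = d - k$ and the polynomial degree forced by \cref{thm:iso2} so that the image of $\mathring{\star}_\sigma$ is exactly $\mathring{\mathcal{P}}_r \Lambda^k(f_\sigma)$. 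The third identity is the same argument starting from the geometric decomposition theorem for $\mathcal{P}_r^{-}\Lambda^k(T)$ and using the second bijection with $j = d - k$.

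The only delicate point is the polynomial-degree bookkeeping in the last two identities: one inverts the degree shift in \cref{thm:iso1} and \cref{thm:iso2} --- solving $s + j = r$ in the full case and $s + j + 1 = r$ in the trimmed case, with $j = d - k$ --- to read off which polynomial space on $f_\sigma$ has $\mathring{\star}_\sigma$-image equal to the trace-free space appearing in the decomposition, and then one checks, exactly as in the proof of \cref{thm:dotty-full}, that $\dot{E}_{\sigma,T}$ carries $k$-forms on $f_\sigma$ to $k$-forms on $T$ without changing polynomial degree, so that the reassembled direct sum is $\mathcal{P}_r \Lambda^k(T)$, respectively $\mathcal{P}_r^{-}\Lambda^k(T)$. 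I expect no obstacle beyond this: injectivity of each composite $\dot{E}_{\sigma,T} \circ \mathring{\star}_\sigma$ follows from \cref{lem:injection} together with the injectivity of $\dot{E}_{\sigma,T}$ used above, directness is inherited from the decompositions being combined, and the two sides already have equal dimension by the counting used for \cref{thm:dotty-full} and the bijectivity of $\mathring{\star}_\sigma$.
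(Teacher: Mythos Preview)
Your approach is correct and is exactly what the paper intends: the corollary is stated there without proof, as an immediate combination of the three geometric decomposition theorems with the $\mathring{\star}$ isomorphisms transplanted to each boundary simplex, which is precisely the plan you describe.

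One remark on the degree bookkeeping you flag as ``the only delicate point'': carrying out your own computation, with $j = d - k$ one solves $s + j = r$ to get $s = r - d + k$ in the full case and $s + j + 1 = r$ to get $s = r - d + k - 1$ in the trimmed case. These are the correct indices (as a check, take $d = n$, $k = n$: then only $\sigma = T$ contributes and one needs $\mathring{\star}_T[\mathcal{P}_s^-\Lambda^0(T)] = \mathring{\mathcal{P}}_r\Lambda^n(T)$, forcing $s = r$). The indices printed in the corollary, $r - d - k$ and $r - d - k - 1$, carry a sign typo on $k$; your argument proves the corrected statement.
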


\printbibliography[title=References]

\end{document}